\newcommand{\CC}{\mathbb{C}}
\newcommand{\RR}{\mathbb{R}}
\DeclareMathOperator{\dom}{dom}
\DeclareMathOperator{\Diff}{Diff}
\DeclareMathOperator{\Eff}{Eff}
\DeclareMathOperator{\germ}{germ}
\DeclareMathOperator{\id}{id}
\DeclareMathOperator{\Hol}{Hol}
\DeclareMathOperator{\loc}{loc}
\DeclareMathOperator{\pr}{pr}
\DeclareMathOperator{\spn}{span}
\DeclareMathOperator{\stab}{stab}
\newcommand{\ol}[1]{\overline{#1}}
\newcommand{\cplx}[2]{\Omega_{#1}^\bullet(#2)}
\newcommand{\bb}[1]{\mathbb{#1}}
\newcommand{\cl}[1]{\mathcal{#1}}
\newcommand{\sr}[1]{\mathscr{#1}}
\newcommand{\fk}[1]{\mathfrak{#1}}
\newcommand{\pdd}[2]{\frac{\partial #1}{\partial #2}}
\newcommand{\rra}{\rightrightarrows}
\numberwithin{equation}{section}
\newtheorem{theorem}                {Theorem}[section]
\newtheorem*{theorem*}              {Theorem}
\newtheorem{corollary}   [theorem]  {Corollary}
\newtheorem{lemma}       [theorem]  {Lemma}
\newtheorem{proposition} [theorem]  {Proposition}
\newtheorem*{property}              {Property}
\theoremstyle{definition}
\newtheorem{definition}  [theorem]  {Definition}
\theoremstyle{remark}
\newtheorem{remark}      [theorem]  {Remark}
\newtheorem{example}     [theorem]  {Example}
\newtheorem*{example*}              {Example}
\newenvironment{enumeratea}{
  \begin{enumerate}[label = (\alph*)]}{
  \end{enumerate}}
\newenvironment{enumerater}{
  \begin{enumerate}[label = (\roman*)]}{
  \end{enumerate}}
\newif\ifdebug
\begin{document}

\begin{abstract}
  A singular foliation $\cl F$ gives a partition of a manifold $M$ into leaves whose dimension may vary. Associated to a singular foliation are two complexes, that of the diffeological differential forms on the leaf space $M / \cl F$, and that of the basic differential forms on $M$. We prove the pullback by the quotient map provides an isomorphism of these complexes in the following cases: when $\cl F$ is a regular foliation, when points in the leaves of the same dimension assemble into an embedded (more generally, diffeological) submanifold of $M$, and, as a special case of the latter, when $\cl F$ is induced by a linearizable Lie groupoid.
\end{abstract}
\title{The basic de Rham complex of a singular foliation}
\author{David Miyamoto}
\date{\today}
\maketitle

\section{Introduction}
\label{sec:introduction}
A singular foliation $\cl F$ of a manifold $M$ is a partition of $M$ into connected, weakly-embedded submanifolds, called leaves, of perhaps varying dimension, satisfying the following smoothness condition: for every $x \in M$ contained in a leaf $L_x$, there exist locally defined vector fields $X^i$ about $x$ such that
\begin{itemize}
\item the $X^i$ are tangent to the leaves, and
  \item the $X^i_x$ span $T_xL_x$
\end{itemize}
We then take the complex of basic differential forms to consist of those $\alpha \in \cplx{}{M}$ such that
\begin{equation*}
  \iota_X \alpha = 0 \text{ and } \cl L_X\alpha = 0, \text{ for all } X \text{ tangent to the leaves}.
\end{equation*}
  The set of $\cl F$-basic forms, denoted $\cplx{b}{M, \cl F}$, is a de Rham subcomplex of differential forms. We relate this complex to one on the quotient (or leaf) space $M/\cl F$. While rarely a smooth manifold - for example, the leaf space of the irrational flow on the torus is not even Hausdorff - $M/\cl F$ is naturally a diffeological space (introduced in Section \ref{sec:diffeology}).  As a diffeological space, $M/\cl F$ comes equipped with a de Rham complex of diffeological differential forms, $\cplx{}{M/\cl F}$, and the quotient $\pi:M \to M/\cl F$ is diffeologically smooth. Its pullback induces a one-to-one morphism from diffeological forms into basic forms. We seek singular foliations for which the following property holds
\begin{property}[P]
  The pullback $\pi^*:\cplx{}{M/\cl F} \to \cplx{b}{M, \cl F}$ is into (hence, an isomorphism).
\end{property}
Diffeology plays an important role. For example, we prove the foliation of the torus by an irrational flow has property (P), and as previously mentioned its leaf space is not a manifold. More generally, we prove
\begin{enumerate}[label = (\Alph*)]
\item \label{item:1} Regular foliations have property (P) (Theorem \ref{thm:5}).
\item \label{item:2} $\cl F$ has property (P) the union of leaves of the same dimension is an embedded (more generally, diffeological) submanifold (Theorem \ref{thm:6}).
\item \label{item:3} $\cl F$ has property (P) if the induced singular foliation on $M \smallsetminus \{x \mid \dim L_x = 0\}$ has property (P) (Theorem \ref{thm:7}).
\end{enumerate}
We proved \ref{item:1} independently, then found Hector, Marc\'{i}as-Virg\'{o}s, and Sanmart\'{i}n-Carb\'{o}n \cite{H2011} proved it earlier. We use groupoid techniques to approach this problem. For a groupoid version of property (P), replace $(M, \cl F)$ with a Lie groupoid $\cl G \rra M$; replace $M/\cl F$ with $M/\cl G$; and take the basic forms to be $\alpha$ such that $s^*\alpha = t^*\alpha$. Then, we arrive at \ref{item:1} by proving:
\begin{itemize}
\item property (P) is Morita-invariant,
\item the Holonomy groupoid has property (P) if and only if $\cl F$ does,
\item an \'{e}tale groupoid with countably generated pseudogroup (e.g. the \'{e}tale version of the Holonomy groupoid) has property (P).
\end{itemize}
Hector et al.\ deal directly with the action of Haefliger's holonomy pseudogroup on a complete transversal. Our methods constitute a generalization of Hector et al.'s approach, and we make explicit some correspondences left as routine in \cite{H2011}.

Item \ref{item:2} has not appeared in the literature, and extends a few existing results. In \cite{KW2016}, Karshon and Watts proved that given a Lie group $G$ acting on $M$ whose identity component $G^\circ$ acts properly, the corresponding action groupoid $G \ltimes M \rra M$ has property (P). We can alternatively get this as a consequence of \ref{item:2}. Palais \cite{P1961} showed that proper group actions admit slices, or equivalently, that their associated groupoids are linearizable, in the sense of being locally isomorphic to a linearized version of the action. In Appendix \ref{sec:line-strat-dimens}, Proposition \ref{prop:7}, we show that this implies the singular foliation of $M$ by the $G^\circ$ orbits satisfies the hypothesis in \ref{item:2}, hence has property (P). The fact $G^\circ$ is connected implies $G^\circ \ltimes M \rra M$ also has property (P) (Proposition \ref{prop:5}), and a lemma from \cite{KW2016} shows this suffices to conclude $G \ltimes M \rra M$ has property (P).

In \cite{W2015}, Watts extended \cite{KW2016} to prove that proper Lie groupoids have property (P). Assuming source-connectedness, we can give another argument. It is by now well-established that proper Lie groupoids are linearizable, in the sense of Definition \ref{def:22}. Weinstein made the first advances in this direction in \cite{W2002}, where he showed, under some extra conditions, that linearizability at fixed points was sufficient for linearizability at finite type orbits (a term we do not define here). Zung made the next significant advance in \cite{Z2006}, by proving proper Lie groupoids, again satisfying some extra assumptions, are linearizable at their fixed points. Crainic and Struchiner collected these various results in \cite{CS2013}, and themselves gave a self-contained proof that proper Lie groupoids are linearizable without qualification, which we state as Theorem \ref{thm:8}. By Proposition \ref{prop:7}, linearizability implies the hypotheses of \ref{item:2} hold, hence the foliation of $M$ by the orbits of $\cl G$ has property (P). By connectedness, this is equivalent to $\cl G$ having property (P).

A key feature of the above arguments is that linearizability of a (source-connected) Lie groupoid suffices to guarantee it has property (P). Since properness is sufficient but not necessary for linearizability, our class of examples for which (P) holds is distinct from those found in \cite{KW2016} and \cite{W2015}. However, even linearizability is not a necessary condition for (P). Our final result \ref{item:3} shows that the situation of the $0$-dimensional leaves do not impact whether $\cl F$ has property (P). For example, every singular foliation of $\bb R$ has property (P), and many of these are not linearizable.

Our paper is structured as follows. Sections \ref{sec:diffeology}, \ref{sec:singular-foliations}, and \ref{sec:lie-algebr-group} review diffeology, singular foliations, and Lie groupoids, respectively. In Section \ref{sec:basic-de-rham}, we state and prove our main results. Appendix \ref{sec:line-strat-dimens} is a review of linearizability, and contains the proof that linearizable Lie groupoids satisfy the hypotheses in \ref{item:2}. Appendix \ref{sec:pseudogroups} reviews pseudogroups.

\subsection*{Acknowledgements}
\label{sec:acknowledgements}

I am grateful to Yael Karshon for introducing this question to me, and providing constant and positive support throughout the writing process. I also thank Jordan Watts for directing me to sources on Lie groupoids, and Camille Laurent-Gengoux for helpful comments on the first version. This research is partially supported by the Natural Sciences and Engineering Research Council of Canada.

\section{Diffeology}
\label{sec:diffeology}

We use diffeology to handle singular spaces, so we review of its basic concepts. Our reference is Iglesias-Zemmour's book \cite{I2013}.

\subsection{Diffeology and Manifolds}
\label{sec:first-notions}

\begin{definition}[Diffeology]
\label{def:1}
  Let $X$ be a set. A \emph{parametrization} into $X$ is a map from an open subset of a Cartesian space into $X$.  A \emph{diffeology} on $X$ is a set $\sr D$ of parametrizations, whose members are called \emph{plots}, such that

  \begin{itemize}
  \item Constant maps are plots.
  \item If a parametrization $P:U \to X$ is such that about each $r \in U$, there is an open $V \subseteq U$ and a plot $Q:V \to X$ such that $P=Q|_V$, then $P$ is a plot.
  \item If $P:U \to X$ is a plot and $V$ is an open subset of a Cartesian space, then for any smooth $F:V \to U$, the pre-composition $F^* P$ is a plot.
  \end{itemize}

  A space equipped with a diffeology is a \emph{diffeological space}.
\end{definition}

The set of locally constant parametrizations into $X$, and the set of all parametrizations into $X$, are both diffeologies, called the respectively \emph{discrete} and \emph{coarse}. Every other diffeology sits between these two. A classical smooth manifold\footnote{a topological space locally homeomorphic to $\RR^n$ for some $n$, equipped with a compatible smooth atlas. We do not assume Hausdorff nor second-countable here.} $M$ carries a canonical diffeology $\sr D_M$, consisting of the smooth maps (in the usual sense) from Cartesian spaces into $M$.

\begin{definition}[Smooth maps]
\label{def:2}  
We say a map $f:X \to Y$ between diffeological spaces is \emph{diffeologically smooth} if for every plot $P$ of $X$, the pre-composition $P^*f$ is a plot of $Y$. Denote the smooth maps from $X$ to $Y$ by $C^\infty(X, Y)$. If $Y = \RR$, we write $C^\infty(X) := C^\infty(X, \RR)$.
\end{definition}

If $(X, \cl D_X)$ is discrete or $(Y, \cl D_Y)$ is coarse, all maps $X \to Y$ are smooth. A map between classical manifolds is diffeologically smooth if and only if it is smooth in the usual sense.

\begin{definition}[D-topology]
  \label{def:3}
  The D-topology on a diffeological space $X$ is the finest topology in which all plots are continuous. In other words, $A \subseteq X$ is D-open if and only if $P^{-1}(A)$ is open for all plots $P$.
\end{definition}

Every diffeologically smooth map is continuous with respect to the D-topology. For a classical manifold $M$, the D-topology associated to $\sr D_M$ is the original topology on $M$. The D-topology lets us define local objects in diffeology, in particular locally smooth maps, local diffeomorphisms, and diffeological manifolds.

\begin{definition}
  \label{def:4}\
  \begin{itemize}
  \item A partially defined function $f:A \subseteq X \to X'$ is \emph{locally smooth} if $A$ is D-open, and for every for every plot $P$ of $X$, the pre-composition $P^*f:P^{-1}(A) \to X'$ is a plot. Denote the set of locally smooth maps by $C_{\loc}^\infty(X, X')$.
  \item If $f:A \subseteq X \to X'$ is locally smooth, injective, and $f^{-1}:f(A) \subseteq X' \to X$ is locally smooth, we call $f$ a \emph{local diffeomorphism}. Denote the set of local diffeomorphisms by $\Diff_{\loc}(X, X')$.
    \item For fixed $n$, if about every $x \in X$ there is a local diffeomorphism $f:A \subseteq X \to \RR^n$, we call $(X, \sr D)$ a \emph{diffeological manifold} of dimension $n$.
  \end{itemize} 
\end{definition}

The charts of a classically smooth $n$-manifold $M$ are local diffeological diffeomorphisms into $\RR^n$. Therefore $(M, \sr D_M)$ is a diffeological $n$-manifold. Conversely, the set of local diffeological diffeomorphisms from a diffeological $n$-manifold $X$ into $\RR^n$ is a maximal atlas for a smooth structure $\sigma$ on $X$ that is compatible with the D-topology. Thus $(X, \sigma)$ is a classical (perhaps not Hausdorff nor second-countable) $n$-manifold.  This correspondence describes a full, faithful functor from the category of classical manifolds to that of diffeological spaces, whose image is the diffeological manifolds.

Beyond classical manifolds, the following three spaces inherit a natural diffeology: subsets of a diffeological space, quotients of a diffeological space, and function spaces of diffeologically smooth maps.

\begin{definition}[Subset diffeology]
  \label{def:5}\
  \begin{itemize}
  \item For a subset $S$ of a diffeological space $X$, with inclusion $\iota: S\hookrightarrow X$, the \emph{subset diffeology} on $S$ consists of all parametrizations $P:U \to S$ such that $\iota \circ P$ is a plot of $X$. The D-topology of the subset diffeology is not necessarily the subset topology.
    \item A subset $S$ is a \emph{diffeological submanifold} of $X$ if it is a diffeological manifold when equipped with the subset diffeology.
\end{itemize}
\end{definition}

We take a moment to relate diffeological submanifolds to the more common notion of weakly-embedded submanifolds:

\begin{definition}
  \label{def:6}
  A subset $S$ of a classical smooth manifold $M$ is a \emph{weakly-embedded submanifold} if it is a diffeological submanifold of $M$, and the inclusion $\iota:S \hookrightarrow M$ is an immersion.
\end{definition}

\begin{remark}
  \label{rem:1}
  If $S$ is weakly-embedded, it admits a unique topology and smooth structure of a classical smooth manifold. Therefore, when a subset is weakly-embedded, we may speak of it as a (sub)manifold without ambiguity.
\end{remark}

A natural question arises: is every diffeological submanifold of a classical smooth manifold also weakly-embedded? Perhaps surprisingly, the answer is no. Here is a counter-example, proposed by Y. Karshon and J. Watts.
Take $M = \RR^2$, and set
\begin{equation*}
  S := \{(x,y) \in \RR^2 \mid x^2 = y^3\}.
\end{equation*}

\begin{proposition}
  \label{prop:1}
  The map $f:\RR \to S$ given by $f(t) = (t^3, t^2)$ is a diffeological diffeomorphism, where $S$ carries the subset diffeology. Hence $S$ is a diffeological submanifold.
\end{proposition}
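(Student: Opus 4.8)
The plan is to show that $f\colon\RR\to S$ is a bijection, that it is diffeologically smooth, and that its inverse is diffeologically smooth, where $S$ carries the subset diffeology from $\RR^2$. Bijectivity is elementary: if $(x,y)\in S$ then $y=x^{2/3}\ge 0$, and setting $t=x^{1/3}$ (the real cube root) gives $t^3=x$ and $t^2=x^{2/3}=y$, so $f$ is onto; and $f(t)=f(t')$ forces $t^3=t'^3$, hence $t=t'$. In fact the inverse is given explicitly by $f^{-1}(x,y)=\sqrt{y}\cdot\operatorname{sign}(x)$ when $x\ne 0$, and $f^{-1}(0,0)=0$; more usefully $f^{-1}(x,y)=\sqrt[3]{x}$ for all $(x,y)\in S$, where $\sqrt[3]{\cdot}$ is the real cube root function.

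Smoothness of $f$ is immediate: $t\mapsto(t^3,t^2)$ is a smooth map $\RR\to\RR^2$ landing in $S$, so by Definition \ref{def:5} it is smooth into $S$ with the subset diffeology. The real content is showing $f^{-1}$ is diffeologically smooth, i.e.\ that for every plot $P\colon U\to S$ of the subset diffeology, the composite $f^{-1}\circ P\colon U\to\RR$ is an ordinary smooth map. Writing $\iota\circ P=(P_1,P_2)$ with $P_1,P_2\colon U\to\RR$ ordinary smooth functions satisfying $P_1^2=P_2^3$, I need to show $f^{-1}\circ P=\sqrt[3]{P_1}$ is smooth. The key observation is that $\sqrt[3]{P_1}=P_1/P_2$ wherever $P_2\ne 0$ (since $P_1^2=P_2^3$ gives $(P_1/P_2)^3 = P_1^3/P_2^3 = P_1^3/P_1^2 = P_1$ when $P_1\ne0$, and one checks $P_2=0\iff P_1=0$), but this fails at points where $P_2=0$, so I cannot simply divide. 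Instead I will argue smoothness directly, reducing to a one-variable statement.

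The crux is the following local claim: if $g\colon U\to\RR$ is smooth on an open $U\subseteq\RR^n$ and $g\ge 0$, and moreover $g=h^3$ for some smooth $h\ge 0$ (equivalently $g^2$ has a smooth cube root that is itself a perfect cube of a smooth function --- here $g=P_2$ and $g^3 = P_1^2$), does it follow that $\sqrt{g}$, or rather $\sqrt[3]{P_1}$, is smooth? I expect the main obstacle to be precisely this regularity argument, handled as follows. Fix $r_0\in U$. If $P_1(r_0)\ne 0$, then $P_1\ne 0$ near $r_0$, hence $P_2=(P_1^2)^{1/3}\ne 0$ near $r_0$, and $\sqrt[3]{P_1}=P_1/P_2$ is a quotient of smooth functions with nonvanishing denominator, hence smooth near $r_0$. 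If $P_1(r_0)=0$, then also $P_2(r_0)=0$; I claim all partial derivatives of $\phi:=\sqrt[3]{P_1}$ exist and vanish at $r_0$, and $\phi$ is smooth there. To see this, note $\phi^2 = \sqrt[3]{P_1^2} = \sqrt[3]{P_2^3} = P_2$ (using $P_2\ge 0$), so $\phi=\sqrt{P_2}$; and $\phi^6 = P_2^3 = P_1^2$, so $\phi = \sqrt[3]{P_1}$ consistently. Now $\phi^2=P_2$ is smooth and nonnegative with a zero at $r_0$; a classical fact (e.g.\ via Hadamard's lemma or Glaeser's theorem on the smoothness of $\sqrt{\cdot}$ composed with nonnegative smooth functions vanishing to even order) does \emph{not} immediately give $\phi$ smooth --- $\sqrt{t^2}=|t|$ is the warning. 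So instead I use the stronger relation $\phi^3 = \sqrt{P_1^2}\cdot\operatorname{sign}(P_1)$... which again has an absolute value. The clean way out: use that $\phi = \sqrt[3]{P_1}$ with $P_1$ smooth, and invoke the fact that the real cube root function $c\colon\RR\to\RR$, $c(x)=\sqrt[3]{x}$, while not smooth at $0$, becomes smooth after composition with any smooth function $P_1$ \emph{that is itself a perfect cube} $P_1 = \psi^3$ of a smooth $\psi$ --- but $\psi = \phi$ is what we're trying to prove is smooth, so this is circular.

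I therefore expect to prove smoothness of $\phi$ at a zero $r_0$ of $P_1$ by the following bootstrap. From $P_1^2 = P_2^3$ with $P_1,P_2$ smooth and $P_2\ge 0$, and writing things in terms of the vanishing orders: along any smooth curve $\gamma$ through $r_0$, $P_1\circ\gamma$ and $P_2\circ\gamma$ are smooth one-variable functions with $(P_1\circ\gamma)^2 = (P_2\circ\gamma)^3$; by the one-variable theory (unique factorization of germs, or direct Taylor expansion), $P_2\circ\gamma = u^2 v$ and $P_1\circ\gamma = \pm u^3 v^{?}$ forcing $P_1\circ\gamma = w^3$ for a smooth $w$, whence $\phi\circ\gamma = w$ is smooth. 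Combined with the elementary estimate $|\phi(r)| = |P_1(r)|^{1/3}\le C|r-r_0|^{k/3}$ coming from $P_1$ vanishing to some order $k\ge 1$ at $r_0$ together with control of derivatives, one shows $\phi$ and all its formal derivatives extend continuously, so $\phi\in C^\infty$. The honest statement I will actually use is: \emph{a smooth function $P_1$ on $U\subseteq\RR^n$ all of whose values are cubes and such that $P_1^{2}$ equals the cube of a smooth nonnegative function has a smooth cube root}; I will prove this by reduction to the one-variable case along lines and a Whitney-type argument, and this reduction is the step I expect to be the genuine technical heart of the proof. Given that, $f^{-1}\circ P=\phi$ is smooth for every plot $P$, so $f^{-1}\in C^\infty(S,\RR)$, $f$ is a diffeological diffeomorphism, and $S$ with the subset diffeology is diffeomorphic to $\RR$, hence a diffeological submanifold.
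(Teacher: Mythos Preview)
Your setup is correct: bijectivity and smoothness of $f$ are easy, and the crux is showing that for any plot $P=(P_1,P_2)\colon U\to S$ the function $\phi:=f^{-1}\circ P$ is smooth, where $\phi^3=P_1$ and $\phi^2=P_2$ are smooth. But you have not proved this, and your sketch will not work as written.

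The statement you are trying to establish --- that a real-valued function whose square and cube are both $C^\infty$ must itself be $C^\infty$ --- is exactly Joris' theorem \cite{J1982}, and this is what the paper invokes. It is genuinely nontrivial; your attempted argument has two concrete failures. First, your one-variable step appeals to ``unique factorization of germs, or direct Taylor expansion'' to force $P_1\circ\gamma=w^3$ with $w$ smooth. There is no unique factorization for $C^\infty$ germs, and Taylor expansion is insufficient because $P_1\circ\gamma$ may be flat at the point in question (all derivatives zero) without being identically zero nearby; controlling $\phi$ in that regime is precisely where the difficulty lies and where naive arguments break down. Second, even granting the one-variable case, smoothness of $\phi\circ\gamma$ along every smooth curve $\gamma$ through $r_0$ does \emph{not} imply smoothness of $\phi$ at $r_0$ (Boman's theorem gives this only if one already knows $\phi$ is a function whose composites with \emph{all} smooth curves are smooth and uses this characterization directly; but your proposed Whitney-type patching from curves plus an estimate $|\phi|\le C|r-r_0|^{k/3}$ neither establishes differentiability nor controls mixed partials). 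So both the base case and the induction-to-higher-dimensions step are gaps.

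The fix is simply to cite Joris: given smooth $P_1,P_2$ with $\phi^2=P_2$ and $\phi^3=P_1$, Joris' theorem (applied pointwise in $U$, or rather to the function $\phi\colon U\to\RR$) gives $\phi\in C^\infty(U)$ immediately. That is the paper's route, and there is no known elementary shortcut.
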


Observing that the inclusion is not an immersion at $(0,0)$ completes the example. The proof of Proposition \ref{prop:1} invokes a 1982 result of H. Joris \cite{J1982}, which states that for every parametrization $Q:V \to \RR$, the composition $f \circ Q$ is smooth (as a map to $\RR^2$) if and only if $Q$ is smooth. Joris' theorem also answers an open question in diffeology posed by Iglesias-Zemmour. Details will appear in an upcoming paper written jointly with Y. Karshon and J. Watts.

\begin{definition}
  \label{def:7}
 Given a diffeological space $X$ and a relation $\cl R$ with quotient map $\pi:X \to X/\cl R$, the \emph{quotient diffeology} on $X/\cl R$ consists of those parametrizations $P:U \to X/\cl R$ such that about each $r \in U$, there is an open $V\subseteq U$ and a plot $Q:V \to X$ of $X$ such that $P|_V = \pi \circ Q$. In a diagram,
       \begin{equation*}
    \begin{tikzcd}
       &  & X \ar[d, "\pi"] \\
      r \in V \ar[r, hook] \ar[urr, "\exists Q"] & U \ar[r, "P"] &  X/\cl R
    \end{tikzcd}
  \end{equation*}
 The D-topology of the quotient diffeology always coincides with the quotient topology..
\end{definition}
 
\begin{definition}[Functional diffeology]
  \label{def:8}
  Let $X$ and $Y$ be diffeological spaces. The \emph{standard functional diffeology} on $C_{\loc}^\infty(X, Y)$ consists of those parametrizations $P:U \to C_{\loc}^\infty(X, Y)$ satisfying: about each $r_0 \in U$ and $x_0 \in \dom P(r_0)$, there are open neighbourhoods $V \subseteq U$, and (D-open) $A \subseteq \dom P(r_0)$ such that
  \begin{itemize}
  \item $A \subseteq \dom P(r)$ for all $r \in V$, and
  \item the map $V \times A \to Y$ given by $(r, x) \mapsto P(r)(x)$ is smooth.
  \end{itemize}
\end{definition}

We equip $C^\infty(X, Y)$ with the subset diffeology induced from $C_{\loc}^\infty(X, Y)$. This is the coarsest diffeology in which the evaluation map $X \times C^\infty(X, Y) \to Y$ given by $(x, f) \mapsto f(x)$ is smooth. We similarly equip $\Diff_{\loc}(X, Y)$ and $\Diff(X, Y)$ with their subset diffeologies. In the global case, the composition map is diffeologically smooth. For a classical manifold $M$, we can prove the inversion map on $\Diff(M)$ is also smooth. 

\begin{proposition}
  \label{prop:2}
  The map $\Diff(M) \to \Diff(M)$ given by $g \mapsto g^{-1}$ is smooth.
\end{proposition}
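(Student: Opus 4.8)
The plan is to unwind the functional diffeology on $\Diff(M)$ into a concrete statement about smooth maps out of $U \times M$, and then deduce smoothness of inversion from the inverse function theorem applied with parameters.

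First I would record the following description of plots. Every element of $\Diff(M)$ is a globally defined diffeomorphism, and the D-topology of the classical manifold $M$ is its usual topology, so D-open subsets of $M$ are just open subsets. Hence Definition \ref{def:8}, together with the subset diffeologies defining $C^\infty(M,M) \subseteq C_{\loc}^\infty(M,M)$ and $\Diff(M) \subseteq C^\infty(M,M)$, says: a parametrization $P \colon U \to \Diff(M)$ is a plot if and only if, about every $(r_0,x_0) \in U \times M$, there are open sets $V \ni r_0$ and $A \ni x_0$ on which $(r,x) \mapsto P(r)(x)$ is smooth. Since smoothness of a map into a manifold is local, gluing over all $x_0$ and then all $r_0$ shows this is equivalent to the single statement that
\[
  \widehat P \colon U \times M \to M, \qquad \widehat P(r,x) = P(r)(x)
\]
is smooth. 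So it suffices to show: if $\widehat P$ is smooth and each $P(r)$ is a diffeomorphism, then the map $(r,y) \mapsto P(r)^{-1}(y)$, which is the analogous map $\widehat{P^{-1}}$ for the parametrization $r \mapsto P(r)^{-1}$ (whose values indeed lie in $\Diff(M)$), is smooth $U \times M \to M$.

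The key step is to consider $F \colon U \times M \to U \times M$, $F(r,x) = (r, \widehat P(r,x)) = (r, P(r)(x))$, which is smooth and is a set-theoretic bijection with inverse $G(r,y) = (r, P(r)^{-1}(y))$. I would show $F$ is a local diffeomorphism: in any pair of coordinate charts its derivative at $(r_0,x_0)$ is block lower-triangular, with diagonal blocks the identity on the $U$-factor and the derivative at $x_0$ of the diffeomorphism $x \mapsto P(r_0)(x) = P(r_0)$ on the $M$-factor, the latter being a linear isomorphism precisely because $P(r_0)$ is a diffeomorphism. Thus $DF$ is invertible everywhere, and the inverse function theorem furnishes, around each point of $U \times M$, a local smooth inverse of $F$; since any local inverse of a bijection agrees with the global inverse $G$ on its domain, $G$ is smooth, and reading off its second coordinate gives smoothness of $(r,y) \mapsto P(r)^{-1}(y)$. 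Hence $r \mapsto P(r)^{-1}$ is again a plot, and inversion is diffeologically smooth.

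I expect the only genuine subtlety to be the first paragraph: carefully checking that the functional-diffeology condition, which is local in both the source parameter and the point of $M$ and is phrased in terms of the D-topology, collapses to ordinary smoothness of $\widehat P$ on $U \times M$. The inverse-function-theorem step is the conceptual heart but is routine once one has the idea of absorbing the parameter $r$ into $F$ so that its derivative becomes block-triangular; note that no properness or global hypothesis on $M$ is used, since the argument is entirely local (and so goes through even for non-Hausdorff, non-second-countable $M$).
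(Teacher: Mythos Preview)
Your proposal is correct and follows essentially the same approach as the paper: both reduce to showing smoothness of $(r,y)\mapsto P(r)^{-1}(y)$, define the same auxiliary bijection $F(r,x)=(r,P(r)(x))$ on $U\times M$, and invoke the inverse function theorem to conclude its inverse is smooth. The only cosmetic difference is in verifying that $F$ is a local diffeomorphism---you compute the block lower-triangular Jacobian directly, whereas the paper instead observes that $F$ is a submersion by exhibiting the smooth section $y\mapsto(r_0,P(r_0)^{-1}(y))$ through each point.
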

\begin{proof}
   Let $P:U \to \Diff(M)$ be a plot, meaning $P:U \to C^\infty(M, M)$ is a plot with image in $\Diff(M)$.  It suffices to show $r \mapsto P(r)^{-1}$ is smooth as a map into $C^\infty(M, M)$, so we prove
  \begin{equation}\label{eq:1}
    U \times M \to M, \quad (r, x) \mapsto P(r)^{-1}(x)
  \end{equation}
  is smooth. Let $\cl P:U \times M \to U \times M$ be the map $(r, x) \mapsto (r, P(r)(x))$, and similarly define $\cl P^{-1}$. Since $P$ is a plot, $\cl P$ is smooth. Furthermore, it is a submersion: fixing $(r_0, x_0) \in U \times M$ and $y_0 := P(r_0)(x_0)$, the map $M \to U \times M$ given by $y \mapsto (r_0, P(r_0)^{-1}(y))$ is a smooth (since $r_0$ is fixed) section of $\cl P$ through $(r_0, x_0)$. Therefore by the inverse function theorem, locally $\cl P$ admits a smooth inverse. But this inverse is exactly $\cl P^{-1}$, so this map, and hence \eqref{eq:1}, is smooth.
\end{proof}

Because composition and inversion are smooth, we call $\Diff(M)$ a \emph{diffeological group}.  Any subgroup of $\Diff(M)$ with the subset diffeology is also a diffeological group. It is an open question whether $\Diff(X)$ is a diffeological group for arbitrary diffeological spaces..

\subsection{Diffeological Forms}
\label{sec:diffeological-forms}

Now we introduce diffeological differential forms, and state two useful results for quotients.

\begin{definition}[Diffeological forms]\label{def:9}
  A \emph{diffeological k-form} $\alpha$ on $X$ is an assignment to each plot $P:U \to X$ a differential $k$-form $\alpha(P) \in \Omega^k(U)$ such that for every open subset $V$ of a Cartesian space, and every smooth map $F:V \to U$, we have
  \begin{equation*}
    \alpha(P \circ F) = F^*(\alpha(P)).
  \end{equation*}
Denote the set of diffeological $k$-forms by $\Omega^k(X)$, and the set of diffeological forms by $\cplx{}{X}$.
\end{definition}

As with usual differential forms, diffeologial forms pull back under smooth functions.

\begin{definition}\label{def:10}
  Let $f :X \to Y$ be smooth, and $\alpha \in \Omega^k(Y)$.  The \emph{pullback} $f^*\alpha \in \Omega^k(X)$ is defined on the plots of $X$ by $(f^*\alpha)(P) := \alpha(f \circ P)$.
\end{definition}

The set $\Omega^k(X)$ is naturally a real vector space: given $\alpha, \beta \in \Omega^k(X)$ and $\lambda \in \RR$, define for every plot $P:U \to X$,
\begin{equation*}
  (\lambda\alpha + \beta)(P) := \lambda \alpha(P) + \beta(P).
\end{equation*}
The space $\Omega^\bullet(X)$ also carries a differential $d$ and wedge product $\wedge$, respectively defined by 
\begin{equation*}
  (d\alpha)(P) := d\alpha(P), \quad (\alpha \wedge \beta)(P) := \alpha(P) \wedge \beta(P).
\end{equation*}
With respect to the grading $\cplx{}{X} = \bigoplus_{k=0}^\infty \Omega^k(X)$, the space $(\cplx{}{X}, d, \wedge)$ is a differential commutative graded algebra.  The pullback by a smooth function is a morphism of commutative differential graded algebras.

Consider a classical manifold $M$.  To each usual differential form $\underline \alpha$ we may associate a diffeological form $\alpha$ by $\alpha(P) := P^*\underline \alpha$. Conversely, to each diffeological form $\alpha$,  we can specify a usual form $\underline \alpha$ by declaring that in each chart $\varphi$ of $M$, we have $\varphi^*\underline \alpha := \alpha(\varphi)$. By identifying $\alpha$ and $\underline \alpha$, we identify the usual de Rham complex on $M$ with the diffeological one. Under this identification, the pullback by a smooth function viewed in the classical and diffeological senses agree. Therefore, we may freely switch between viewing classical forms on $M$ as diffeological ones, and vice versa. 

The next two results appear in \cite{I2013} as Articles 6.38 and 6.39, respectively.

\begin{proposition}
  \label{prop:3}
  Fix a diffeological space $X$ with relation $\cl R$, and equip $X/\cl R$ with the quotient diffeology. The image of $\pi^*:\Omega^k(X/\cl R) \to \Omega^k(X)$ is those $k$-forms $\alpha$ on $X$ such that for any two plots $P, Q:U \to X$ with $\pi \circ P = \pi \circ Q$, we have
  \begin{equation*}
    \alpha(P) = \alpha(Q).
  \end{equation*}
\end{proposition}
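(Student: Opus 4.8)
The plan is to prove the two inclusions separately. The easy one is that every form in the image of $\pi^*$ has the stated property: if $\alpha = \pi^*\beta$ and $P, Q : U \to X$ are plots with $\pi \circ P = \pi \circ Q$, then by Definition \ref{def:10} we have $\alpha(P) = \beta(\pi \circ P) = \beta(\pi \circ Q) = \alpha(Q)$.

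For the reverse inclusion, suppose $\alpha \in \Omega^k(X)$ satisfies the compatibility condition; I would construct a candidate $\beta \in \Omega^k(X/\cl R)$ and show $\pi^*\beta = \alpha$. Given a plot $\bar P : U \to X/\cl R$ of the quotient diffeology, Definition \ref{def:7} says that $\bar P$ admits local lifts: $U$ is covered by open sets $V$ on each of which there is a plot $Q_V : V \to X$ with $\bar P|_V = \pi \circ Q_V$. On each such $V$, set $\beta(\bar P)|_V := \alpha(Q_V) \in \Omega^k(V)$. To see this is well-defined, observe that on an overlap $V \cap V'$ the two lifts satisfy $\pi \circ (Q_V|_{V\cap V'}) = \bar P|_{V \cap V'} = \pi \circ (Q_{V'}|_{V \cap V'})$, so the hypothesis on $\alpha$ gives $\alpha(Q_V|_{V \cap V'}) = \alpha(Q_{V'}|_{V \cap V'})$; since diffeological forms commute with restriction to open subsets (the case of Definition \ref{def:9} where $F$ is an open inclusion), this is precisely the equality $\alpha(Q_V)|_{V \cap V'} = \alpha(Q_{V'})|_{V \cap V'}$. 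Hence the locally defined forms glue to a well-defined $\beta(\bar P) \in \Omega^k(U)$, independent of all choices of cover and lifts.

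Next I would verify that $\bar P \mapsto \beta(\bar P)$ is a diffeological form, i.e.\ that $\beta(\bar P \circ F) = F^*\beta(\bar P)$ for every smooth $F : W \to U$ between opens of Cartesian spaces. This can be checked near an arbitrary point of $W$: choosing a local lift $Q_V$ of $\bar P$ around the image point, the composite $Q_V \circ F$, restricted to $F^{-1}(V)$, is a local lift of $\bar P \circ F$, so by definition $\beta(\bar P \circ F) = \alpha(Q_V \circ F) = F^*\alpha(Q_V) = F^*\beta(\bar P)$ near that point, the middle equality being the defining property of $\alpha \in \Omega^k(X)$; since this holds locally, it holds on all of $W$. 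Finally, $\pi^*\beta = \alpha$: for any plot $P : U \to X$, the map $P$ is itself a global lift of the plot $\pi \circ P$ of $X/\cl R$, so $(\pi^*\beta)(P) = \beta(\pi \circ P) = \alpha(P)$.

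The argument is essentially bookkeeping about the local nature of plots and forms; the one point requiring genuine care — and the only place the hypothesis on $\alpha$ enters — is the well-definedness of $\beta(\bar P)$, which rests on comparing different local lifts through the compatibility condition together with the interaction between that condition and restriction of diffeological forms to open subsets.
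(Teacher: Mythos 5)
Your proof is correct; the paper itself omits the proof of Proposition \ref{prop:3}, deferring to Article 6.38 of \cite{I2013}, and your argument (the easy pullback computation in one direction, and in the other the construction of $\beta$ via local lifts, with well-definedness on overlaps being exactly where the compatibility hypothesis is used) is the standard one found there. The one point that genuinely needs care --- gluing the locally defined forms $\alpha(Q_V)$ and checking independence of the choice of lifts --- is handled correctly via the restriction property of diffeological forms.
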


\begin{remark}\label{rem:2}
  Proposition \ref{prop:3} is equivalent to the following statement.  Consider the diagram

  \begin{equation*}
    \begin{tikzcd}
      X \times_\pi X \ar[r, "\pr_1"] \ar[d, "\pr_2"] & X \ar[d, "\pi"] \\
      X \ar[r, "\pi"] & X/\cl R
    \end{tikzcd}
  \end{equation*}

  Then $\alpha$ is in the image of $\pi^*$ if and only if $\pr_1^*\alpha = \pr_2^*\alpha$. Equivalently, $\alpha$ is basic with respect to the diffeological relation groupoid $X \tensor[]{\times}{_\pi} X \rra X$ (c.f.\ Section \ref{sec:basic-de-rham}).
\end{remark}

We omit the proof.

\begin{lemma}\label{lem:1}
The pullback $\pi^*:\Omega^k(X/\cl R) \to \Omega^k(X)$ is injective. 
\end{lemma}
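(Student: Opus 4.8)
**Proof plan for Lemma \ref{lem:1} (injectivity of $\pi^*$).**

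The plan is to unwind the two definitions — the quotient diffeology and pullback of diffeological forms — and reduce injectivity to a purely local statement that can be checked plot-by-plot. Suppose $\alpha \in \Omega^k(X/\cl R)$ satisfies $\pi^*\alpha = 0$; I want to show $\alpha(P) = 0$ for every plot $P:U \to X/\cl R$. Fix such a $P$ and a point $r_0 \in U$. By the definition of the quotient diffeology (Definition \ref{def:7}), there is an open neighbourhood $V \subseteq U$ of $r_0$ and a plot $Q:V \to X$ with $P|_V = \pi \circ Q$. Then on $V$,
\begin{equation*}
  \alpha(P)|_V = \alpha(P|_V) = \alpha(\pi \circ Q) = (\pi^*\alpha)(Q) = 0,
\end{equation*}
where the first equality uses the compatibility axiom for diffeological forms (Definition \ref{def:9}) applied to the inclusion $V \hookrightarrow U$, and the third is just the definition of pullback (Definition \ref{def:10}).

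Since $r_0 \in U$ was arbitrary, the $k$-form $\alpha(P) \in \Omega^k(U)$ vanishes on a neighbourhood of every point of $U$, hence $\alpha(P) = 0$ identically. As $P$ was an arbitrary plot of $X/\cl R$, this shows $\alpha = 0$, so $\pi^*$ is injective.

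There is no real obstacle here: the only thing to be careful about is that the factoring plot $Q$ is only guaranteed to exist \emph{locally} on $U$, so one cannot simply write $P = \pi \circ Q$ globally and conclude in one line — one must glue the local vanishing statements, which is immediate since vanishing of a differential form is a local condition. Everything else is a direct translation of definitions, and indeed this is why the authors state it without proof as Article 6.39 of \cite{I2013}.
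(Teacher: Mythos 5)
Your argument is correct and is essentially identical to the paper's own proof: both reduce to showing $\ker\pi^*$ is trivial, locally lift a plot $P$ of $X/\cl R$ through the quotient diffeology to a plot $Q$ of $X$, and run the same chain of equalities $\alpha(P)|_V = \alpha(\pi\circ Q) = (\pi^*\alpha)(Q) = 0$ before gluing. (One small slip in your closing remark: it is Proposition \ref{prop:3}, not this lemma, whose proof the paper omits.)
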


\begin{proof}
  The pullback $\pi^*$ is a linear map, so it suffices to prove its kernel is trivial.  Suppose $\beta \in \Omega^k(X/\cl R)$ satisfies $\pi^*\beta = 0$.  To show $\beta = 0$, we must show $\beta(P) = 0$ for any plot $P:U \to X/\cl R$.  We may do this locally. Let $r \in U$, and by definition of the pushforward diffeology, take a plot $Q:V \to X$ locally lifting $P$ about $r$.  Then
  \begin{equation*}
    \beta(P)|_V = \beta(P|_V) = \beta(\pi \circ Q) = (\pi^*\beta)(Q) = 0.
  \end{equation*}
So $\beta(P)$ vanishes locally, hence on $U$.
\end{proof}

\section{Singular Foliations}
\label{sec:singular-foliations}

We take the view that a singular foliation of a manifold is a partition of the manifold into connected immersed submanifolds assembling together smoothly in some sense. To develop this sense, we use the notion of a smooth singular distribution on a manifold. This is a slightly different, but equivalent, approach to that taken in the introduction.

Fix a classical manifold $M$, here always assumed to be Hausdorff and second-countable. We will drop the word ``classical'' for this section. Let $\cl V(M)$ denote the collection of vector fields of $M$ defined over open domains, and let $\fk X(M)$ denote the collection of globally defined vector fields. 

\begin{definition}
  \label{def:11}\   
  \begin{itemize}
\item A \emph{rough singular distribution} $\Delta$ over $M$ is an assignment to each $x \in M$ a linear subspace $\Delta_x$ of the tangent space $T_xM$.
  \item A \emph{section} of $\Delta$ is any smooth vector field $X \in \cl V(M)$ such that $X_y \in \Delta_y$ for every $y \in \dom X$. Denote the set of sections of $\Delta$ by $\Gamma_{\text{loc}}(\Delta)$.
    \item If for every $x \in M$ and $v \in \Delta_x$, there is a section of $\Delta$ through $v$, i.e.\ some $X \in \Gamma_{\text{loc}}(\Delta)$ such that $X_x = v$, then we say $\Delta$ is \emph{smooth}. 
  \end{itemize}
\end{definition}

\begin{remark}
  \label{rem:3}
  An equivalent condition for smoothness is: about each $x \in M$, there are vector fields $X^i \in \cl V(M)$ defined near $x$ such that $\spn(X^i_x) = \Delta_x$ and for all $i$ and all $y \in \dom(X^i)$, we have $X^i_y \in \Delta_y$. This is the analogue of the definition of smooth singular foliation presented in the introduction.
\end{remark}

An relevant question is: given a smooth singular distribution, about each $x \in M$, do there exist vector fields $X^i \in \cl V(M)$ defined on $U \ni X$ such that $\spn(X^i_y) = \Delta_y$ for all $y \in U$? This was resolved in the affirmative independently by Sussmann \cite{Su2008}, and by Drager, Lee, Park, and Richardson (DLPR) \cite{DLPR2012}. We will use this fact later, so state it here.

\begin{theorem}[Sussman-DLPR]
  \label{thm:1}
  Given a smooth singular distribution $\Delta$, there always exists $X^i \in \fk X(M)$ such that $\spn(X^i_x) = \Delta_x$ for all $x \in M$. Moreover, the $X^i$ can be chosen to be complete.
\end{theorem}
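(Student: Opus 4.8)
The plan is to manufacture globally defined sections of $\Delta$ out of the smoothness hypothesis, then pass to a countable spanning subfamily, and finally rescale the members to make them complete. Throughout I use that $M$ is Hausdorff and second-countable, so that bump functions, the Lindel\"{o}f property, and complete Riemannian metrics are available.

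\emph{Step 1 (global sections).} First I would show that for every $x \in M$ and $v \in \Delta_x$ there is a globally defined vector field lying in $\Delta$ and passing through $v$. Smoothness of $\Delta$ gives a section $Y \in \Gamma_{\text{loc}}(\Delta)$ with $Y_x = v$, defined on an open $U \ni x$. Choosing $\chi \in C^\infty(M)$ with $\chi(x) = 1$ and support a compact subset of $U$, the vector field $X := \chi Y$, extended by $0$ outside $U$, lies in $\fk X(M)$, satisfies $X_x = v$, and is a section of $\Delta$ because each $X_y = \chi(y) Y_y$ is a scalar multiple of an element of the subspace $\Delta_y$ (and $X_y = 0 \in \Delta_y$ where $\chi$ vanishes). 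Hence the collection $\Gamma(\Delta)$ of all $X \in \fk X(M)$ with $X_y \in \Delta_y$ for all $y$ already satisfies $\spn\{X_x : X \in \Gamma(\Delta)\} = \Delta_x$ for every $x$.

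\emph{Step 2 (countable spanning family).} For each $p \in M$ I would choose a basis $v^p_1, \dots, v^p_{d_p}$ of $\Delta_p$ (where $d_p := \dim \Delta_p \le \dim M$) and, by Step 1, global sections $X^p_1, \dots, X^p_{d_p} \in \Gamma(\Delta)$ through the $v^p_j$. Since linear independence is an open condition, there is an open $W_p \ni p$ on which $(X^p_1)_y, \dots, (X^p_{d_p})_y$ stay linearly independent; as these vectors lie in $\Delta_y$, this shows both that $x \mapsto \dim \Delta_x$ is lower semicontinuous and that $d^* := \sup_x \dim \Delta_x \le \dim M < \infty$. For each $0 \le k \le d^*$ the stratum $M_k := \{x \in M : \dim \Delta_x = k\}$, being a subspace of the second-countable $M$, is Lindel\"{o}f, and $\{W_p \cap M_k\}_{p \in M_k}$ is an open cover of it, from which I extract a countable subcover $\{W_{p^k_n} \cap M_k\}_{n}$. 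Then the countable family $\{X^{p^k_n}_j : 0 \le k \le d^*,\ n \in \NN,\ 1 \le j \le k\}$ spans $\Delta$ pointwise: for $x$ with $\dim \Delta_x = k$ we have $x \in W_{p^k_n}$ for some $n$, so $(X^{p^k_n}_1)_x, \dots, (X^{p^k_n}_k)_x$ are $k$ linearly independent vectors inside the $k$-dimensional space $\Delta_x$, hence a basis. (Alternatively, one can skip the stratification: $\fk X(M)$ is a separable Fr\'{e}chet space in its $C^\infty$ topology, so $\Gamma(\Delta)$ has a countable dense subset $\{X^i\}$; since each $\Delta_x$ is finite-dimensional, $\spn\{X^i_x\}$ is closed in it, and taking $C^\infty$-limits of sections in $\Gamma(\Delta)$ shows it is all of $\Delta_x$.) I then relabel the chosen family as $\{X^i\}$.

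\emph{Step 3 (completeness).} Finally I would fix a complete Riemannian metric $g$ on $M$ and replace each $X^i$ by $f^i X^i$, where $f^i := (1 + |X^i|_g)^{-1}$ is a strictly positive smooth function. Rescaling each vector by a positive scalar changes neither membership in $\Delta$ nor the pointwise span, so the new family still satisfies $\spn\{(f^iX^i)_x\} = \Delta_x$; and now $|f^i X^i|_g \le 1$, so the integral curves of $f^i X^i$ have $g$-speed at most $1$ and, $g$ being complete, cannot leave $M$ in finite time. Hence each $f^i X^i$ is complete, giving the required vector fields. The one genuinely non-formal point is the passage to a \emph{countable} family: because $\dim \Delta_x$ can jump upward, a bare Lindel\"{o}f argument on the cover $\{W_p\}$ fails, and one must stratify by the lower semicontinuous rank (or invoke separability of the section space); the rest, including the completeness rescaling, is routine.
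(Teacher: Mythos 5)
The paper itself gives no proof of this theorem: it is imported from Sussmann \cite{Su2008} and Drager--Lee--Park--Richardson \cite{DLPR2012}, so your attempt has to be measured against what those results actually assert. Your Step 1 is fine, and Step 2, with the stratification by the lower semicontinuous rank (or the separability of the section space), correctly produces a \emph{countable} family of global sections spanning $\Delta$ pointwise. Step 3 needs one small repair: $|X^i|_g = \sqrt{g(X^i,X^i)}$ is only continuous, not smooth, at zeros of $X^i$, so $f^i = (1+|X^i|_g)^{-1}$ need not be smooth; replace it by $f^i = (1+|X^i|_g^2)^{-1/2}$, after which the bounded-speed argument on a complete Riemannian manifold does give completeness.

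The genuine gap is that a countable family is not the content of the theorem you are naming. The two cited papers are titled ``Smooth distributions are globally finitely spanned'' and ``Smooth distributions are finitely generated'': the point of the Sussmann--DLPR theorem is that \emph{finitely many} global complete vector fields suffice, with the number controlled by $\dim M$ alone. Your closing remark identifies the passage to a countable family as the one non-formal step, but as your own Step 2 shows that passage is routine; the non-formal step is getting from countably many generators down to finitely many, and no Lindel\"{o}f or separability argument can do that. It requires an additional combinatorial/geometric device (DLPR combine the local generators into a bounded number of global fields by exploiting the rank stratification and carefully chosen linear combinations with controlled supports). If all the present paper needed were a countable spanning family of complete sections, your argument would suffice; as a proof of the theorem as attributed, it establishes a strictly weaker statement.
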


 For a smooth singular distribution, consider the dimension function
 \begin{equation*}
M \to \mathbb N, \quad x \mapsto \dim \Delta_x.
 \end{equation*}
 This lets us classify points as regular or singular: points $x \in M$ for which there is a neighbourhood $U$ on which $x \mapsto \dim \Delta_x$ is constant are the \emph{regular points}, and all others are \emph{singular}. We also introduce the subsets $M_{*k}$ of $M$, for $* \in \{=,\geq,>,<,\leq,\neq\}$, defined by
 \begin{equation*}
   M_{*k} := \{x \in M \mid \dim \Delta_x * k\}.
 \end{equation*}
The dimension map is lower semi-continuous, so the sets $M_{\geq k}$ are open. In particular, one can show the set of regular points is open and dense in $M$.

\begin{definition}
  \label{def:12}
    A \emph{singular foliation} of $M$ is a partition $\cl F$ of $M$ such that its members, called \emph{leaves}, are weakly-embedded and connected, and the associated singular distribution $(\Delta_{\cl F})_x := T_xL$ is smooth. Here $L$ is the leaf of $\cl F$ containing $x$.
  \end{definition}

  \begin{remark}
    \label{rem:4}
    For weakly-embedded submanifolds, see Definition \ref{def:6}. Since the weakly-embedded submanifolds admit unique smooth structures, a singular foliation does indeed depend only on the partition of $M$, and not on any choice of structure on its leaves. Also, because we assume $M$ is Hausdorff and second-countable here, it follows that the leaves are Hausdorff and second-countable, too.
  \end{remark}
  
We call a leaf \emph{regular} or \emph{singular} according to whether its points are regular or singular with respect to $\Delta_{\cl F}$.  The union of regular leaves is open and dense in $M$.

  Stefan in \cite{S1973} and Sussmann in \cite{Su1973} independently made groundbreaking progress in the study of singular foliations. Here we briefly review their results. Stefan originally worked with what he called ``arrows'' on a manifold $M$. While his terminology was not widely adopted, these are natural objects in diffeology.

  \begin{definition}
  \label{def:13}
  An \emph{arrow} on a manifold $M$ is a plot $a:U \subseteq \RR \to \Diff_{\loc}(M)$ such that $a(0) = \id$ and if $x \in \dom a(t)$, then $x \in \dom a(s)$ for every $0 \leq s \leq t$. Note $a(t)$ might be the empty map.
\end{definition}

If $X \in \cl V(M)$ has flow $\Psi$, the map $t \mapsto \Psi(t, \cdot)$ is an arrow. However, in general we do not require arrows to satisfy a group law. Take a collection of arrows $\cl A$ on $M$, and set

\begin{itemize}
\item $\Psi \cl A$ to be the pseudogroup (see Appendix \ref{sec:pseudogroups}) generated by $\bigcup_{a \in \cl A} a(\RR)$.
\item $(\Delta_{\cl A})_x := \left\{\frac{d}{dt}\Big|_{t = t_0} a(t,y) \mid a \in \cl A, \ a(t_0,y) = x\right\}$.
  \item $(\overline{\Delta_{\cl A}})_x := \{d\varphi_{\varphi^{-1}(x)}(v) \mid \varphi \in \Psi \cl A, \ v \in (\Delta_{\cl A})_{\varphi^{-1}(x)}\}$.
  \end{itemize}

  One can verify that both $\Delta_{\cl A}$ and $\overline{\Delta}_{\cl A}$ are smooth singular distributions. The following appears as Theorem 1 in \cite{S1973}.
\begin{theorem}[Stefan]
  \label{thm:2}
  Let $\cl A$ be a collection of arrows on a manifold $M$. The orbits of $\Psi \cl A$ are leaves of a singular foliation of $M$. For each leaf, $T_xL = (\overline{\Delta_{\cl A}})_x$. 
\end{theorem}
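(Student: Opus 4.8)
The plan is to follow Stefan's original strategy: realize each orbit of $\Psi \cl A$ as a smooth manifold by covering it with images of \emph{accessibility maps}, and read off its tangent spaces along the way. Fix $x_0 \in M$, and let $L$ be the orbit of $\Psi \cl A$ through $x_0$; since $\id \in \Psi \cl A$ and a pseudogroup is closed under composition and inversion, the orbits partition $M$. For finite sequences $a_1, \dots, a_k \in \cl A$ and small $t = (t_1, \dots, t_k)$, consider the map
\begin{equation*}
  F_{a_1, \dots, a_k}(t) := a_1(t_1) \circ \cdots \circ a_k(t_k)(x_0),
\end{equation*}
defined on the open subset of $\RR^k$ where the composite makes sense; each such $F$ is smooth with image in $L$, and $L$ is the union of the images of such maps based at its various points (one also allows the time-reversed arrows needed because an arrow need not obey a group law, so $a(t)^{-1}$ is not literally an arrow --- this bookkeeping is routine). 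Two facts drive everything. First, each arrow is tangent to $\overline{\Delta_{\cl A}}$: by definition $\frac{d}{ds}\Big|_{s=s_0} a(s, y) \in (\Delta_{\cl A})_{a(s_0, y)} \subseteq (\overline{\Delta_{\cl A}})_{a(s_0, y)}$. Second, $\overline{\Delta_{\cl A}}$ is \emph{invariant} under $\Psi \cl A$: for $\varphi \in \Psi \cl A$, the differential $d\varphi_y$ maps $(\overline{\Delta_{\cl A}})_y$ isomorphically onto $(\overline{\Delta_{\cl A}})_{\varphi(y)}$, which is immediate from the construction of $\overline{\Delta_{\cl A}}$ as the $\Psi \cl A$-saturation of $\Delta_{\cl A}$ together with the pseudogroup axioms. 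In particular $\dim (\overline{\Delta_{\cl A}})_y$ is constant along $L$; write $p$ for this common value.

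I would next pin down the local structure. Differentiating $F_{a_1, \dots, a_k}$ by the chain rule and using tangency of arrows and invariance of $\overline{\Delta_{\cl A}}$ gives $\operatorname{im} dF_t \subseteq (\overline{\Delta_{\cl A}})_{F(t)}$ for every accessibility map, so every $F$ has rank at most $p$. Since the rank is also at most $\dim M$, there is a choice of $a_1, \dots, a_k$ and a point $t^*$ at which $\operatorname{rank} dF$ is maximal among all accessibility maps based at $x_0$; a standard argument --- if some direction $v \in (\overline{\Delta_{\cl A}})_{F(t^*)}$ is missed, write $v = d\varphi(u)$ with $u$ an arrow velocity, and insert that arrow into the composite at the appropriate position to produce strictly larger rank --- shows the maximal rank equals $p$ and that $F$ has constant rank $p$ near $t^*$, with $\operatorname{im} dF_t = (\overline{\Delta_{\cl A}})_{F(t)}$ there. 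By the constant-rank theorem, after shrinking its domain $F$ factors as a submersion followed by an embedding, so a neighbourhood of $F(t^*)$ in $L$ is realized as an embedded $p$-dimensional submanifold $P \subseteq M$ --- a \emph{plaque} --- with $T_zP = (\overline{\Delta_{\cl A}})_z$ for all $z \in P$. The same construction based at any $y \in L$ yields a plaque through $y$.

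The crux, and the step I expect to be the main obstacle, is chart compatibility: showing that the plaques form a basis for a manifold topology on $L$ --- i.e.\ that two plaques $P, P'$ meeting at a point $z$ overlap in an open subset of each, with smooth transition. The key mechanism is that $\Psi \cl A$ sends plaques to plaques: if $\varphi \in \Psi \cl A$ and $P$ is a plaque, then $\varphi(P)$ is again an immersed $p$-submanifold tangent to $\overline{\Delta_{\cl A}}$ (by invariance) and, locally, is again a plaque. This lets one move any accessibility map through $z$ into a standard position and then invoke the constant-rank normal form --- two plaques through $z$ sharing the $p$-dimensional tangent space $(\overline{\Delta_{\cl A}})_z$ must coincide near $z$. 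It is exactly here that one needs $\overline{\Delta_{\cl A}}$ rather than $\Delta_{\cl A}$: only the saturated distribution is $\Psi \cl A$-invariant, and invariance is precisely what makes the plaques glue. Granting this, $L$ becomes a connected smooth $p$-manifold --- connected because it is a union of plaques chained through $x_0$ --- whose inclusion into $M$ is an injective immersion with $T_xL = (\overline{\Delta_{\cl A}})_x$.

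It remains to verify the other conditions of Definition \ref{def:12}. For weak-embeddedness (Definition \ref{def:6}) one checks that the subset diffeology on $L$ equals its manifold diffeology, equivalently that $L$ is an initial submanifold of $M$; this follows in the usual way from the existence of distinguished charts for Stefan foliations and the second-countability of $L$ (a plot of $M$ with image in $L$ is, near each parameter, trapped in a single embedded plaque, hence smooth into $L$). For smoothness of the associated distribution: by construction $(\Delta_{\cl F})_x = T_xL = (\overline{\Delta_{\cl A}})_x$, and $\overline{\Delta_{\cl A}}$ is already a smooth singular distribution --- equivalently, Theorem \ref{thm:1} furnishes global vector fields spanning it. This exhibits the orbits of $\Psi \cl A$ as the leaves of a singular foliation with the claimed tangent spaces. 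The one persistent technical nuisance throughout is bookkeeping of arrow domains, which may shrink or become empty, so every statement must be kept local and one tracks where each composite is defined.
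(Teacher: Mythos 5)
The paper does not prove this statement: it is quoted as Theorem~1 of Stefan's paper \cite{S1973}, so there is no internal proof to compare against. Your proposal reconstructs the classical Stefan--Sussmann orbit-theorem argument (accessibility maps, maximal rank, constant-rank factorization into plaques, invariance of the saturated distribution $\overline{\Delta_{\cl A}}$ under $\Psi\cl A$), and that is the right strategy; the reduction of smoothness of $\Delta_{\cl F}$ to the already-asserted smoothness of $\overline{\Delta_{\cl A}}$, and the weak-embeddedness argument via distinguished charts plus countability of the plaques in a chart, are both handled correctly.

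There is one step that, as written, would fail and needs repair: the claim that ``two plaques through $z$ sharing the $p$-dimensional tangent space $(\overline{\Delta_{\cl A}})_z$ must coincide near $z$.'' Tangency at a single point never forces two embedded $p$-submanifolds to agree on a neighbourhood (two curves tangent at a point in $\RR^2$ already refute it), and even tangency of both plaques to $\overline{\Delta_{\cl A}}$ at \emph{every} point is not by itself enough, since nothing yet rules out distinct integral manifolds through $z$. What actually closes the argument is a containment statement: if $P'$ is the image of any accessibility map through $z$ and $P$ is a plaque at $z$ arising from a \emph{maximal-rank} accessibility map, then $P'$ is locally contained in $P$. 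One proves this by composing the map defining $P'$ with the one defining $P$ (using that elements of $\Psi\cl A$ carry accessibility maps to accessibility maps, i.e.\ your invariance mechanism) and applying the constant-rank theorem to the composite, whose rank is still $p$ by maximality; local containment of one $p$-manifold in another then gives open overlap and smooth transition. Your sketch gestures at this (``move any accessibility map through $z$ into a standard position''), but the stated justification via shared tangent spaces is the wrong reason, and since you yourself identify this as the crux, it is the one place the writeup must be replaced by the containment argument rather than tightened.
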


Now we discuss Sussmann's contribution. This next result can also be derived from Stefan's theorem after a bit of work, and is therefore called the Stefan-Sussmann theorem. We first require the notion of an integrable smooth singular distribution.

\begin{definition}
  \label{def:14}
   Fix a smooth singular distribution $\Delta$.
  \begin{itemize}
  \item An \emph{integral submanifold} of $\Delta$ through $x \in M$ is an immersed submanifold $L$ containing $x$ such that for every $y \in L$, we have $T_yL = \Delta_y$.
  \item An integral submanifold $L$ is \emph{maximal} if it is connected, and every other connected integral submanifold intersecting $L$ is an open submanifold of $L$. If a maximal integral submanifold exists about $x$, it is unique.
    \item We say $\Delta$ is \emph{integrable} if it admits an integral submanifold at each point in $M$.
  \end{itemize}
\end{definition}
  
It is immediate from the definition of a singular foliation that the associated smooth singular distribution $\Delta_{\cl F}$ is integrable. The Stefan-Sussmann theorem essentially states that all integrable singular distributions are $\Delta_{\cl F}$ for some singular foliation $\cl F$.

\begin{theorem}[Stefan-Sussmann]
  \label{thm:3}
  Fix a smooth singular distribution $\Delta$. The following are equivalent.

  \begin{enumeratea}
  \item $\Delta$ is integrable.
  \item There is a maximal integral submanifold about each point of $M$.
    \item  Let $\cl D$ be a ``spanning set'' of locally-defined vector fields, meaning $\Delta_x = \spn \{X_x \mid X \in \cl D\}$, and take $\cl A$ to be those arrows of the form $\Psi(t, \cdot)$, where $\Psi$ is the flow of some $X \in \cl D$. Then $\Delta = \overline{\Delta_{\cl A}}$.
  \end{enumeratea}
  In the case of (a), (b), or (c), the orbits of $\Psi \cl A$ are exactly the maximal integral submanifolds of $\Delta$. By Stefan's theorem, these are weakly-embedded, and therefore form a singular foliation $\cl F$ of $M$ such that $\Delta = \Delta_{\cl F}$.
\end{theorem}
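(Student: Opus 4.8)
The plan is to establish the cycle of implications (b) $\Rightarrow$ (a) $\Rightarrow$ (c) $\Rightarrow$ (b), and then read the final assertions directly off of Stefan's theorem (Theorem \ref{thm:2}). The implication (b) $\Rightarrow$ (a) is immediate, since a maximal integral submanifold is in particular an integral submanifold.

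For (a) $\Rightarrow$ (c), the crux is a flow-invariance lemma: \emph{if $\Delta$ is integrable and $X \in \Gamma_{\loc}(\Delta)$ has flow $\Psi$, then the differential of $\Psi(t, \cdot)$ at $x$ carries $\Delta_x$ isomorphically onto $\Delta_{\Psi(t,x)}$, for every $t$ for which $\Psi(t,x)$ is defined.} I would first prove this for small $t$: fixing $x$, choose an integral submanifold $L$ through $x$; since $X_y \in \Delta_y = T_y L$ for $y \in L$, $X$ restricts to a vector field on $L$ near $x$, and uniqueness of integral curves (legitimate because $L$ is weakly-embedded) forces $\Psi(t, \cdot)$ to carry a neighbourhood of $x$ in $L$ into $L$ for small $t$; differentiating gives $d\Psi(t,\cdot)(\Delta_x) \subseteq \Delta_{\Psi(t,x)}$, and flowing $-X$ backward from $\Psi(t,x)$ gives the reverse inclusion. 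A compactness argument along the flow line $s \mapsto \Psi(s,x)$, re-choosing an integral submanifold at each intermediate point and composing via the chain rule, upgrades this to all admissible $t$. Granting the lemma, I finish (c) as follows. Let $\cl A$ be the arrows coming from flows of the spanning set $\cl D$. Since $\overline{\Delta_{\cl A}}$ is a distribution containing $X_x$ for every $X \in \cl D$ defined near $x$ (take $\varphi = \id$), we get $\Delta_x \subseteq (\overline{\Delta_{\cl A}})_x$. Conversely, each $\varphi \in \Psi\cl A$ is a finite composition of time-$t$ flows of elements of $\cl D$, so the lemma and the chain rule give $d\varphi_{\varphi^{-1}(x)}(\Delta_{\varphi^{-1}(x)}) = \Delta_x$; combined with $(\Delta_{\cl A})_{\varphi^{-1}(x)} \subseteq \Delta_{\varphi^{-1}(x)}$ (sections of $\Delta$ take values in $\Delta$), this yields $(\overline{\Delta_{\cl A}})_x \subseteq \Delta_x$. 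Hence $\Delta = \overline{\Delta_{\cl A}}$.

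For (c) $\Rightarrow$ (b), I would apply Theorem \ref{thm:2} to $\cl A$: the orbits of $\Psi\cl A$ are weakly-embedded leaves of a singular foliation with $T_x L = (\overline{\Delta_{\cl A}})_x$, which by (c) equals $\Delta_x$; thus each orbit is an integral submanifold of $\Delta$. To see the orbit $L$ through a point $x$ is maximal, take any connected integral submanifold $N$ meeting $L$, say at $y \in N \cap L$. Since $\cl D$ spans $\Delta$ and each $X \in \cl D$ is tangent to $N$ along $N$, a flow-out map built from finitely many such $X^i$ with $\spn(X^i_y) = \Delta_y$ is a local diffeomorphism from a neighbourhood of $0$ onto a neighbourhood of $y$ in $N$, with image in the $\Psi\cl A$-orbit of $y$; so a neighbourhood in $N$ of each point of $N$ lies in the orbit of that point, whence (the orbit-assignment being locally constant and $N$ connected) $N$ lies in a single orbit, necessarily $L$. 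The inclusion $N \hookrightarrow L$ is then an injective immersion of equidimensional manifolds, hence open, so $L$ is maximal. This argument also yields the final assertion: under (a), (b), or (c), the $\Psi\cl A$-orbits \emph{are} the maximal integral submanifolds, so by Theorem \ref{thm:2} they are weakly-embedded and form a singular foliation $\cl F$ with $\Delta_{\cl F} = \overline{\Delta_{\cl A}} = \Delta$.

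I expect the flow-invariance lemma — and especially its passage from small time to all admissible time — to be the main obstacle: one must bootstrap along the flow line, re-selecting integral submanifolds and chaining differentials, while keeping straight that the integral curve of $X$ through $x$ genuinely lies (for small parameter) inside the chosen integral submanifold, which is exactly where weak-embeddedness enters. The maximality step in (c) $\Rightarrow$ (b) is the second delicate point, since it rests on the same flow-out and inverse-function-theorem ideas that underlie Stefan's theorem rather than on a self-contained computation.
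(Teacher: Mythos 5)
The paper does not actually prove Theorem \ref{thm:3}: it is presented as a review of the classical results of Stefan \cite{S1973} and Sussmann \cite{Su1973}, with the remark that it ``can also be derived from Stefan's theorem after a bit of work.'' Your proposal carries out exactly that derivation, and the overall architecture --- (b) $\Rightarrow$ (a) trivially, (a) $\Rightarrow$ (c) via flow-invariance of $\Delta$, and (c) $\Rightarrow$ (b) by feeding the arrows of $\cl D$ into Theorem \ref{thm:2} and then checking maximality by a flow-out argument --- is the standard and correct one. Two small points deserve care. First, your parenthetical that uniqueness of integral curves is ``legitimate because $L$ is weakly-embedded'' overstates what Definition \ref{def:14} gives you: an integral submanifold is only assumed immersed. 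This does no harm, since an immersed submanifold is locally embedded and the restriction/uniqueness argument for $X|_L$ only needs to be run in such a local embedding; but as written the lemma should not lean on weak-embeddedness, which in this theorem is a \emph{conclusion} (via Stefan) rather than a hypothesis. Second, in the reverse inclusion $(\overline{\Delta_{\cl A}})_x \subseteq \Delta_x$, elements $\varphi \in \Psi\cl A$ are only \emph{locally} finite compositions of time-$t$ flows (cf.\ Lemma \ref{lem:5}); this suffices because the statement you need about $d\varphi_{\varphi^{-1}(x)}$ is pointwise, but it is worth saying explicitly. With those caveats the sketch is sound and matches the route the paper implicitly endorses.
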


Sussmann called the condition $\Delta = \overline{\Delta_{\cl A}}$ ``$\cl D$-invariance'' of $\Delta$. Stefan called it ``homogeneity'' of $\cl A$.  Historically, it was only Stefan who noted the maximal integral submanifolds of $\Delta$ are not just immersed, but weakly-embedded. It was only Sussmann who noted the maximality property of these orbits.

As a corollary to the Stefan-Sussmann theorem, we can show that every singular foliation is uniquely determined by its associated integrable singular distribution. This generalizes the Frobenius theorem for regular foliations, in the following sense: for a regular foliation $\cl F$, the Frobenius implies that correspondence $\cl F \mapsto \Delta_{\cl F}$ between foliations of $M$ and integrable regular distributions is a bijection.

\begin{corollary}\label{cor:1}
  The map
  \begin{equation*}
    \begin{split}
        \{\text{singular foliations}\} &\to \{\text{integrable singular distributions}\}\\
  \cl F &\mapsto \Delta_{\cl F}
    \end{split}
  \end{equation*}
  is a bijection.
\end{corollary}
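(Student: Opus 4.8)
The plan is to derive this from the Stefan--Sussmann theorem (Theorem~\ref{thm:3}). Surjectivity is essentially immediate from it, so the real content is injectivity, which I will obtain by showing that the leaves of a singular foliation $\cl F$ are forced to be exactly the maximal integral submanifolds of $\Delta_{\cl F}$ --- and these depend only on $\Delta_{\cl F}$.

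First, a word on well-definedness: if $\cl F$ is a singular foliation then $\Delta_{\cl F}$ is smooth by Definition~\ref{def:12}, and each leaf $L$ satisfies $T_yL = (\Delta_{\cl F})_y$ for all $y \in L$, so $L$ is an integral submanifold of $\Delta_{\cl F}$ through each of its points; hence $\Delta_{\cl F}$ is integrable and the map lands in the claimed codomain. For surjectivity, given an integrable singular distribution $\Delta$, apply Theorem~\ref{thm:3}(c): taking a spanning set $\cl D$ of local sections of $\Delta$ and the collection $\cl A$ of arrows obtained from their flows, the orbits of $\Psi \cl A$ form a singular foliation $\cl F$ with $\Delta_{\cl F} = \Delta$.

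For injectivity, suppose $\cl F$ is a singular foliation with $\Delta := \Delta_{\cl F}$, and let $N_x$ denote the maximal integral submanifold of $\Delta$ through $x$, which exists by Theorem~\ref{thm:3} and is connected, unique, and intrinsic to $\Delta$ by Definition~\ref{def:14}. I claim the leaves of $\cl F$ are precisely the $N_x$. Fix $x$ and let $L$ be the leaf through $x$; as noted above $L$ is a connected integral submanifold of $\Delta$ meeting $N_x$, so by maximality $L$ is an open submanifold of $N_x$. Running the same argument at an arbitrary $y \in N_x$ shows the leaf $L_y$ through $y$ is likewise an open submanifold of $N_x$, in particular $L_y \subseteq N_x$. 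Hence $N_x = \bigcup_{y \in N_x} L_y$ is a union of open subsets which, being blocks of the partition $\cl F$, are pairwise disjoint or equal; since $N_x$ is connected they all coincide, so $N_x = L$. Thus every leaf of $\cl F$ is a maximal integral submanifold of $\Delta$; conversely, for any $x$ the leaf through $x$ is, by what we just showed, a maximal integral submanifold through $x$, hence equals $N_x$ by uniqueness. Therefore $\cl F$ is completely determined by $\Delta$, which is injectivity.

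All the substantive work is packaged inside Theorem~\ref{thm:3}. The one place needing care is the connectedness argument above, and within it the point that a leaf --- being an integral submanifold with $T_yL$ \emph{equal} to, not merely contained in, $\Delta_y$ --- sits in the ambient maximal integral submanifold as an \emph{open} submanifold; this is exactly what lets connectedness collapse the union of leaves to a single leaf. I expect this to be the main (and only) obstacle; the remainder is bookkeeping.
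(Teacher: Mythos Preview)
Your proof is correct and follows essentially the same approach as the paper: both rely on the Stefan--Sussmann theorem and the identification of leaves with maximal integral submanifolds. The paper phrases it as constructing an explicit inverse $\Delta \mapsto \cl F_\Delta$ (the partition into maximal integral submanifolds) and declaring the two maps inverse ``straightforward to check''; your connectedness argument is precisely that check, spelled out in full.
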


\begin{proof}
Smoothness of $\Delta_{\cl F}$ comes from the definition of singular foliation, and the leaves $\cl F$ witness integrability. Now for integrable $\Delta$, define $\cl F_\Delta$ to be the partition of $M$ into its maximal integral submanifolds. This is a well-defined singular foliation by the Stefan-Sussmann theorem. It is now straightforward to check $\Delta \mapsto \cl F_\Delta$ and $\cl F \mapsto \Delta_{\cl F}$ are inverses.
\end{proof}

Let us finally give some examples of singular foliations.

\begin{example}
  Fix any smooth non-negative bounded function $f: \RR \to \RR$, and let $C := f^{-1}(0)$. Consider the smooth singular distribution $\Delta$ on $\RR$ spanned by $\cl D := \{f\pdd{}{x}\}$. The points of $C$ and the connected components of $\RR - C$ constitute maximal integral submanifolds of $\Delta$, hence form a singular foliation of $\RR$.  The set $M_{=0}$ is just $C$, and $M_{=1}$ is $\RR - C$.  As expected, $M_{=1}$ is open, and the regular points are $\text{int}(C) \cup M_{=1}$, which is dense in $\RR$.

  In fact, every closed subset of $\RR$ may be realized as $C$ above. Therefore the collection of singular leaves of a singular foliation may be quite wild. For instance, there is a singular foliation of $\RR$ whose singular points form a Cantor set.

  By a similar argument, the partition of $M$ into maximal integal curves of any complete vector field $X \in \fk X(M)$ is a singular folaition of $M$.
\end{example}

\begin{example}
  \label{ex:1}
  Let $\cl G \rra M$ be a Lie groupoid over $M$, with associated Lie algebroid $(\rho, \fk g)$ (see the next section for Lie groupoids and algebroids). Take $\cl F$ to be the partition of $M$ into the connected components of the orbits of $\cl G$, each equipped with their canonical smooth structure.\footnote{that is, the structure induced by viewing $t:s^{-1}(x) \to \cl O_x$ as a principal $G_x$-bundle} Then the distribution associated to $\cl F$ is exactly $\rho(\fk g)$, and one can show this is smooth. Therefore by definition, $\cl F$ is a singular foliation.

  Since the connected component of the identity $\cl G^\circ \rra M$ has the same Lie algebroid as $\cl G$, Corollary \ref{cor:1} implies the singular foliation induced by $\cl G^\circ$ is equal to $\cl F$. But the leaves of the former are exactly the orbits of $\cl G^\circ$, since these are already connected. Therefore the connected components of the $\cl G$ orbits are the $\cl G^\circ$ orbits.

  We can apply this discussion to obtain the same statements for a Lie group acting smoothly on $M$, by considering the action groupoid. If we consider the $\RR$ action on $M$ induced by the flow of a complete vector field, we recover the first example.
\end{example}

\begin{example}
  \label{ex:2}
  Given an arbitrary Lie algebroid $(\rho, \cl A)$, one can show the distribution $\rho(\cl A)$ is integrable using the Stefan-Sussmann theorem. But historically, Hermann in 1962 \cite{He1962} gave a sufficient condition for integability that predates Stefan and Sussmann's work, is easier to check, and is satisfied by $\rho(\cl A)$. This example subsumes the previous two. It is unknown whether every integrable distribution is of the form $\rho(\cl A)$ for some Lie algebroid $\cl A$.
\end{example}

\section{Lie Algebroids and Groupoids}
\label{sec:lie-algebr-group}

\subsection{Bibundles and Morita Equivalence}
\label{sec:morita-equiv-thro}

Our reference for this review is \cite{Ler2010}. A \emph{Lie groupoid} is a small category $\cl G \rra G_0$ with invertible arrows, such that the base $G_0$ is a Hausdorff, second-countable smooth manifold, the arrow space $\cl G$ is a smooth manifold (not necessarily Hausdorff or second-countable), all structure maps are smooth, and furthermore the source map is a smooth submersion with Hausdorff fibers.

We denote the source-fiber at $x$ by $P_x := s^{-1}(x)$ and the isotropy group at $x$ by $G_x := s^{-1}(x) \cap t^{-1}(x)$.  Because $s$ is a submersion, $P_x$ is an embedded submanifold of $\cl G$. It can also be shown that $G_x$ is a Lie group. The \emph{orbit} of $\cl G$ through $x \in G_0$ is $\cl O := t(P_x)$. This is an equivalence class of the relation $x \sim y$ if there is an arrow $x \mapsto y$. We can equip $\cl O$ with a canonical smooth structure such that $t:P_x \to \cl O$ is a principal $G_x$-bundle (see Theorem 5.4 in \cite{MM2003}), and $\cl O$ is immersed in $G_0$. Denote the orbit space by $M/\cl G$.

\begin{example}
  \label{ex:3}
Our first example of a Lie groupoid is the \emph{action groupoid} associated to a Lie group $G$ acting smoothly on a manifold $M$. The base is $M$, and the arrow space is $G \times M$.  The source and target maps are
\begin{equation*}
  s(g,x) = x, \quad t(g,x) = g \cdot x.
\end{equation*}
The unit map is $u(x) = (e,x)$, and the inversion is $(g,x)^{-1} = (g^{-1},x)$.  The multiplication is given by $(h,y)(g,x) = (hg,x)$, and is defined whenever $g \cdot x = y$. We denote the action groupoid by $G \ltimes M$. Its orbits are exactly the orbits of $G$.  
\end{example}

Just as a Lie group can act on a manifold, we also define a (right) Lie groupoid action on a manifold.

\begin{definition}
  \label{def:15}
  A \emph{right action} of a Lie groupoid $\cl H$ on a manifold $P$ consists of an anchor map $a_R:P \to H_0$ and a multiplication $P \tensor[_{a_R}]{\times}{_t} \cl H \to P$ fitting into the diagram
  \begin{equation*}
        \begin{tikzcd}
      & \ar[dl, shift left, outer sep=3, "\mu"'] \ar[dl, shift right, outer sep=3, "\pr_1" ] P \tensor[_{a_R}]{\times}{_t} \cl H \ar[dr, "\pr_2"] & \\
      P \ar[dr,"a_R"] & & \cl H \ar[dl, shift left, outer sep=3, "s"'] \ar[dl, shift right, outer sep=3, "t"] \\
      & H_0 &
    \end{tikzcd}
  \end{equation*}
  Moreover $(p\cdot h) \cdot h' = p \cdot (hh')$ whenever this makes sense, and $p \cdot 1|_{a(p)} = p$ for all $p \in P$. 
\end{definition}

We can view $P \tensor[_{a_R}]{\times}{_t} \cl H \rra P$ as a Lie groupoid itself, denoted $P \rtimes \cl H$, with source map $\mu$ and target $\pr_1$. The multiplication is $(p, h)(p', h') = (p, hh')$. Note $(a_R, \pr_2)$ is a morphism of Lie groupoids $P \rtimes H \to \cl H$.

\begin{definition}
  \label{def:16}
  A \emph{principal right} $\cl H$ \emph{bundle} is a surjective submersion $\pi:P \to B$ and a right $\cl H$ action on $P$ such that $\pi(p \cdot h) = \pi(p)$ whenever this makes sense, and $\cl H$ acts freely and transitively on the fibers of $\pi$. Precisely, the map $P \tensor[_{a_R}]{\times}{_t} \cl H \to P \tensor[]{\times}{_B} P$ given by $(p, h) \mapsto (p, p\cdot h)$ is a diffeomorphism.
\end{definition}

\begin{remark}
  \label{rem:5}
  We can view the bundle $P \tensor[]{\times}{_B} P \to P$ as a Lie groupoid $P \tensor[]{\times}{_B}P \rra P$, where the first and second projections constitute the source and target maps. When we have a principal right $\cl H$ bundle, we get a morphism of Lie groupoids $P \rtimes \cl H \to P \tensor[]{\times}{_B} P$ that is the identity on the base and the diffeomorphism $(p, h) \mapsto (p, p\cdot h)$ on arrows.
\end{remark}

\begin{example}
  \label{ex:4}
  Every Lie groupoid $\cl H \rra H_0$ acts on its arrow space $\cl H$ by right multiplication along the anchor map $s:\cl H \to H_0$. In other words, $P = \cl H$, $a_R = s$, and $\mu(h, h') = hh'$. Equipping $\cl H$ with this action, the surjective submersion $t:\cl H \to H_0$ becomes a principal right $\cl H$ bundle.
\end{example}

We can analogously define the left action of a Lie groupoid $\cl G$ on $P$ (denoted $\cl G \ltimes P$), and a principle left $\cl G$ bundle. Left and right actions allow us to define a bibundle.
\begin{definition}
  \label{def:17}
  A \emph{bibundle} between two groupoids $\cl G$ and $\cl H$ is a triple $(P, a_L, a_R)$, where $P$ is a manifold, $a_L:P \to G_0$ and $a_R:P \to H_0$, and
  \begin{itemize}
  \item $\cl G$ acts on $P$ from the left, with anchor $a_L$, and $\cl H$ acts on $P$ from the right, with anchor $a_R$.
  \item $a_L:P \to G_0$ is a principal right $\cl H$-bundle.
  \item $a_R$ is $\cl G$-invariant.
  \item the actions of $\cl G$ and $\cl H$ commute.
  \end{itemize}
  We also write $P:\cl G \to \cl H$.
\end{definition}

An \emph{isomorphism} of bibundles $P,Q:\cl G \to \cl H$ is a diffeomorphism $\alpha:P \to Q$ that is equivariant with respect to $\cl G-\cl H$ action, i.e.\ $\alpha(g \cdot p \cdot h) = g \cdot \alpha(p) \cdot h$, whenever this makes sense. It is also possible to define a non-associative composition of bibundles. Using Lie groupoids as the objects, bibundles as the 1-arrows, and isomorphisms of bibundles as the 2-arrows, we get a weak 2-category, sometimes denoted \textbf{Bi}. See \cite{Ler2010} for details.

\begin{definition}
  \label{def:18}
 Two Lie groupoids $\cl G$ and $\cl H$ are \emph{Morita equivalent} if there is a bibundle $P:\cl G \to \cl H$ such that $a_R:P \to H_0$ is a principal left $\cl G$-bundle. In this case we can invert $P$ to get $P^{-1}:\cl H \to \cl G$.
\end{definition}

\begin{remark}
  \label{rem:6}
  An alternative way to define Morita equivalence is through the use of refinements of Lie groupoids. We call a functor $\phi:\cl G \to \cl H$ a \emph{refinement} if
  \begin{itemize}
  \item it is \emph{essentially surjective} (ES), meaning the map $t \circ \pr_1:\cl H \tensor[_s]{\times}{_\phi} G_0 \to H_0$ is a surjective submersion, and
  \item it is \emph{fully faithful} (FF), meaning the square
    \begin{equation*}
      \begin{tikzcd}
        \cl G \ar[r, "\phi"] \ar[d, "{(s,t)}"] & \cl H \ar[d, "{(s,t)}"] \\
        G_0 \times G_0 \ar[r, "{(\phi \times \phi)}"] & H_0 \times H_0
      \end{tikzcd}
    \end{equation*}
    is a pullback.
  \end{itemize}
  Then $\cl G$ and $\cl H$ are Morita equivalent if and only if there is a groupoid $\cl K$ and refinements $\cl K \to \cl G$ and $\cl K \to \cl H$. The invertible bibundle induced by a refinement $\phi:\cl G \to \cl H$ is the pullback along $\phi_0$ of the the principal right $\cl H$-bundle $t:\cl H \to H_0$ from Example \ref{ex:4}; here the pullback of a right principal $\cl H$-bundle $\pi:P \to B$ along a map $F_0:N \to B$ is the right principal $\cl H$-bundle $\pr_1: F_0^*P \to N$, where
  \begin{itemize}
  \item $F_0^*P = P \tensor[]{\times}{_B} N$ is the pullback of the bundle $\pi:P \to B$ by $F_0:N \to B$,
  \item the anchor map for the right $\cl H$ action is $a_R \circ \pr_2$,
  \item the action is $\mu(n,p,h) = (n, p\cdot h)$.
  \end{itemize}
 \end{remark}

\subsection{The holonomy groupoid}
\label{sec:class-lie-group}

For this section we use \cite{MM2003}. We call a Lie groupoid $\cl G \rra G_0$ $\emph{source-connected}$ if the source-fibers are all connected.  As with Lie groups, we can consider the \emph{source-connected identity component} $\cl G^\circ \rra G_0$ of a Lie groupoid. The arrows in $\cl G^\circ$ are those arrows $g \in \cl G$ such that $g$ and $1_{s(g)}$ belong to the same component of $s^{-1}(s(g))$. This is an open source-connected subgroupoid of $\cl G$ (see \cite{M2005}).

A Lie groupoid is \emph{\'{e}tale} if the source map is \'{e}tale. To each \'{e}tale Lie groupoid $\cl G \rra G_0$, we have an associated pseudogroup $\Psi(\cl G)$ on $G_0$ (see Appendix \ref{sec:pseudogroups} for more on pseudogroups). This is defined in terms of bisections of $\cl G$. A \emph{bisection} of $\cl G$ is a local section $\sigma$ of $s$ such that $t \circ \sigma$ is a diffeomorphism. Then we set
\begin{equation*}
  \Psi(\cl G) := \{t\circ \sigma \mid \sigma \text{ is a bisection of } \cl G\}.
\end{equation*}

Now fix a regular folaition $\cl F$ on $M$. Its \emph{holonomy groupoid} is a Lie groupoid $\Hol\rra M$, with arrows $x \mapsto y$ the holonomy classes of leafwise paths, and multiplication given by concatenation of paths. $\Hol$ is source-connected.

Fix a complete transversal $\iota:S \hookrightarrow M$, i.e.\ an embedded submanifold of $M$ meeting every leaf transversally. We can pull back $\Hol$ along $\iota$ (equivalently, restrict it to $S$) to get $\Hol_S \rra S$, a Lie groupoid whose arrows are those arrows in $\Hol$ with endpoints in $S$. This Lie groupoid is \'{e}tale and Morita equivalent\footnote{the inclusion is a refinement, hence induces a Morita equivalence. See Remark \ref{rem:6}.} to $\Hol$. We can then consider the associated pseudogroup $\Psi(\Hol_S)$ on $S$. This is exactly the classical \emph{holonomy pseudogroup} associated to $\cl F$, for which the next result is fundamental.

\begin{theorem}
  \label{thm:4}
  For a regular foliation $\cl F$ with complete transversal $S$, the holonomy pseudogroup $\Psi(\Hol_S)$ is countably generated.
\end{theorem}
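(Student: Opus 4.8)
The plan is to combine two ingredients: (i) whether a holonomy pseudogroup is countably generated depends only on its equivalence class, so we are free to replace the given $S$ by any convenient complete transversal; and (ii) an explicit countable generating set extracted from a countable foliated atlas. For (i) we use that the restriction of $\Hol$ along a complete transversal is a refinement (the footnote to the definition of $\Hol_S$, and Remark~\ref{rem:6}), so that for any two complete transversals $S, S'$ the étale groupoids $\Hol_S$ and $\Hol_{S'}$ are Morita equivalent, hence have equivalent pseudogroups in the sense of Appendix~\ref{sec:pseudogroups}.

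For (ii), using second countability of $M$, I would choose a countable, locally finite \emph{regular} foliated atlas $\{(U_i, \varphi_i)\}_{i \in \NN}$: each $\varphi_i : U_i \xrightarrow{\sim} B^p \times T_i$ carries $\cl F$ to the slabs $B^p \times \{c\}$, and all $U_i$ and all nonempty $U_i \cap U_j$ are connected with connected plaque-traces (such atlases exist for any foliation). Set $S_i := \varphi_i^{-1}(\{0\} \times T_i)$ and let $\widehat S := \bigsqcup_i S_i$, a complete transversal (allowed to be only immersed in $M$; restriction of $\Hol$ along an étale map to $M$ is still a refinement, so (i) applies to it). For each ordered pair $(i,j)$ with $U_i \cap U_j \ne \emptyset$, holonomy transport through the plaques of $U_i \cap U_j$ is a single local diffeomorphism $h_{ij}$ from an open subset of $S_i$ to $S_j$, and there are only countably many of these. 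I claim $\Psi(\Hol_{\widehat S}) = \langle\, \{h_{ij}\} \,\rangle$: any arrow of $\Hol_{\widehat S}$ is represented by a leafwise path $\gamma$ between points of $\widehat S$; as $\gamma([0,1])$ is compact it is covered by finitely many atlas charts ordered along $\gamma$, and since leafwise holonomy depends only on the resulting chain of plaques (and is invariant under leafwise homotopy), the associated germ $t\circ\sigma$ is the corresponding finite composite of restrictions of the $h_{ij}$; conversely each $h_{ij}$ is visibly of this form.

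It remains to transfer from $\widehat S$ to the given $S$, i.e.\ to prove the lemma that countable generation is invariant under equivalence of pseudogroups. Concretely, $S \sqcup \widehat S$ is again a complete transversal, $\Psi(\Hol_S)$ and $\Psi(\Hol_{\widehat S})$ are the restrictions of $\mathcal H := \Psi(\Hol_{S \sqcup \widehat S})$ to the clopen pieces $S$ and $\widehat S$, and both pieces are full for $\mathcal H$ (every orbit meets each). Given the countable generating set $\Phi = \{h_{ij}\}$ of $\Psi(\Hol_{\widehat S})$, for each $x \in S$ choose an element of $\mathcal H$ carrying a piece of $\widehat S$ onto a neighbourhood of $x$; since $S$ is a second-countable, hence Lindelöf, manifold, countably many of these, say $\{g_k\}$, have ranges covering $S$. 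A short chase (conjugating elements of $\Psi(\Hol_S)$ by the $g_k$ into $\Psi(\Hol_{\widehat S})$, and decomposing any element of $\mathcal H$ according to which of $S,\widehat S$ its domain and range meet) shows $\mathcal H = \langle\, \Phi \cup \{g_k\} \,\rangle$, and restricting these generators back to $S$ gives that $\Psi(\Hol_S) = \mathcal H|_S$ is countably generated; here one uses the auxiliary fact that a restriction of a countably generated pseudogroup to an open subset is again countably generated, which I would record in Appendix~\ref{sec:pseudogroups}.

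The main work is bookkeeping rather than any deep idea, and I expect the two delicate points to be: on the construction side, arranging the atlas regular enough that each $h_{ij}$ is a genuine single-valued local diffeomorphism (without this, ``countably many generators'' is not literally true) and verifying that an arbitrary $\Hol_{\widehat S}$-arrow is the advertised composite, which forces one to quote carefully the plaque-chain description of leafwise holonomy from \cite{MM2003}; and, on the reduction side, the Lindelöf argument together with the open-restriction lemma, which are exactly what make countable generation an invariant of the Morita class. Neither obstacle is serious, but both require the pseudogroup formalism of Appendix~\ref{sec:pseudogroups} to be set up with some care.
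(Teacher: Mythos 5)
The paper does not actually prove Theorem \ref{thm:4}: it is quoted as a classical fact about the holonomy pseudogroup (due essentially to Haefliger; see \cite{MM2003}), so there is no in-paper argument to compare yours against. Your proposal is a correct reconstruction of the standard proof, and its two-step structure --- build a countable generating set $\{h_{ij}\}$ from a countable regular foliated atlas over the transversal $\widehat S = \bigsqcup_i S_i$, then transfer countable generation across the equivalence of pseudogroups to the given $S$ --- is exactly how the result is established in the literature. Two points deserve tightening if this were to be written out. First, in the reduction step the phrase ``restricting these generators back to $S$'' cannot be taken literally, since the elements of $\Phi$ and the $g_k$ have domains or ranges in $\widehat S$; the correct countable generating set for $\Psi(\Hol_S) = \mathcal H|_S$ consists of the $S$-restrictions of all finite words in $\Phi \cup \{g_k\} \cup \{g_k^{-1}\}$ (still a countable family of transitions), and your ``auxiliary fact'' that restriction to an open subset preserves countable generation is true precisely via this word-restriction device, so it is worth stating that way. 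Second, the regularity of the atlas (each plaque of $U_i$ meets at most one plaque of $U_j$) is genuinely needed for each $h_{ij}$ to be a single transition rather than a multivalued correspondence, and the existence of such atlases should be cited rather than asserted; you flag both issues yourself, which is the right instinct. Neither affects correctness.
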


\section{Basic de Rham complexes}
\label{sec:basic-de-rham}

\begin{definition}
  \label{def:19}
  Fix a singular foliation $(M, \cl F)$ with associated singular distribution $\Delta$. A differential form $\alpha \in \cplx{}{M}$ is $\cl F$-\emph{basic} if for every section $X \in \Gamma_{\text{loc}}(\Delta)$,
  \begin{equation}\label{eq:2}
    \iota_X\alpha = 0, \text{ and } \cl L_X\alpha = 0.
  \end{equation}
  A form satisfying the first condition is \emph{horizontal}, and the second is \emph{invariant}. Denote the set of basic forms by $\cplx{b}{M, \cl F}$.
\end{definition}

\begin{remark}
  \label{rem:7}
  We could potentially define, for any smooth singular distribution $\Delta$, a $\Delta$-basic form to be one which satisfies \eqref{eq:2} for all $X \in \Gamma_{\loc}(\Delta)$. While there are no technical problems with this definition, if $\Delta$ is not integrable, then by Corollary \ref{cor:1} there is no associated singular foliation $\cl F$, and hence no related leaf-space. Since our goal is to investigate how the basic forms capture transverse structures, it therefore does not make sense for us to define basic in this generality.
\end{remark}

Because $\cl L_X\alpha = \iota_X (d\alpha) - d(\iota_X\alpha)$, any horizontal form is invariant if and only if $\iota_X d\alpha = 0$. In other words, $\alpha \in \cplx{b}{M, \cl F}$ if and only if $\iota_X\alpha = 0$ and $\iota_X d\alpha = 0$ for all $X \in \Gamma_{\loc}(\Delta)$. But the interior derivative $\iota_X$ at $x$ depends only on
  $X_x$. We therefore conclude that to prove $\alpha$ is $\cl F$-basic, it suffices to check \eqref{eq:2} against any set of vector fields spanning $\Delta$. In particular, given a Lie algebroid $(\fk g, \rho)$, a form is $\fk g$-basic (i.e.\ \eqref{eq:2} holds for all $X \in \rho(\Gamma(\fk g))$) if and only if it is basic with respect to the induced singular foliation.

The differential and exterior product of basic forms remain basic, so $\cplx{b}{M, \cl F}$ is a subcomplex of $\cplx{}{M}$. In the presence of a Lie groupoid, there is another complex of ``basic'' forms.

\begin{definition}
  \label{def:20}
  Fix a Lie groupoid $\cl G \rra M$. A differential form $\alpha \in \cplx{}{M}$ is $\cl G$-basic if $s^*\alpha = t^*\alpha$. Denote the de Rham complex of these forms by $\cplx{b}{M, \cl G}$.
\end{definition}

\begin{remark}
  \label{rem:8}
  For a Lie group $G$ acting on $M$, there is also a notion of a $G$-basic form. This is a form $\alpha$ such that $g^*\alpha = \alpha$ for all $g \in G$, and $\iota_X\alpha$ for all $X$ tangent to the orbits of $G$. However, this is not a fundamentally different notion than that from Lie groupoids: a form is $G$-basic if and only if it is $G \ltimes M$-basic (Lemma 3.3 in \cite{W2015}).
\end{remark}

We relate the complexes of $\cl G$-basic and $\cl F_{\cl G}$-basic forms in Proposition \ref{prop:5}.
\subsection{Basic forms and regular foliations}
\label{sec:basic-forms-regular}

In this section we prove statement \ref{item:1} in the Introduction, namely that for a regular foliation $\cl F$, the pullback by the quotient $\pi:M \to M/\cl F$ is an isomorphism $\pi^*:\cplx{}{M/\cl F} \to \cplx{b}{M, \cl F}$. Another proof can be found in \cite{H2011}, but we arrived at this result independently.

To contextualize the following propositions, we will outline the proof here. The pullback is one-to-one for purely diffeological reasons (Lemma \ref{lem:1}). First, we show $\pi^*$ has image contained in $\cl F$-basic forms, so that the question is well-posed (Proposition \ref{prop:4}). Then, we show that the $\cl F$-basic forms are exactly the $\Hol$-basic forms, because $\Hol$ is a source-connected groupoid whose orbits are the leaves of $\cl F$ (Proposition \ref{prop:5}). This reduces the question to whether $\pi^*:\cplx{}{M/\cl F} \to \cplx{b}{M, \Hol}$ is an isomorphism. Now take a complete transversal $S$ to $\cl F$, to obtain the \'{e}tale holonomy groupoid $\Hol_S \rra S$, Morita equivalent to $\Hol \rra M$. Because of this equivalence, the pullback by $\pi$ is an isomorphism if and only if the pullback by $\pi_S:S \to S/\Hol_S$ is an isomorphism (Proposition \ref{prop:6}, Corollary \ref{cor:2}). But $\Hol_S$ has finitely generated pseudogroup, and we can show this implies $\pi_S^*$ is an isomorphism (Lemma \ref{lem:2}).

\begin{proposition}\
  \label{prop:4}
  \begin{enumeratea}
  \item For a singular foliation, every pullback of a form on $M/\cl F$ is $\cl F$-basic.
    \item For a Lie groupoid, every pullback of a form on $M/\cl G$ is $\cl G$-basic.
  \end{enumeratea}
\end{proposition}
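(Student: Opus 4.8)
The plan is to dispatch part (b) in one line by functoriality of pullback, and to handle part (a) by a flow argument for invariance together with a plot-building argument for horizontality.

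For (b), I would observe that the source and target of any arrow of $\cl G$ lie in the same orbit, so $\pi\circ s = \pi\circ t$ as maps $\cl G\to M/\cl G$. Then for any $\beta\in\cplx{}{M/\cl G}$ we get $s^*\pi^*\beta = (\pi\circ s)^*\beta = (\pi\circ t)^*\beta = t^*\pi^*\beta$; that is, $\pi^*\beta$ is $\cl G$-basic. Nothing else is needed.

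For (a), fix $\beta\in\cplx{}{M/\cl F}$, put $\alpha := \pi^*\beta$, and recall that on the manifold $M$ we may regard $\alpha$ as an ordinary differential form, so it makes sense to check $\iota_X\alpha = 0$ and $\cl L_X\alpha = 0$ for $X\in\Gamma_{\loc}(\Delta)$. The one nonformal ingredient is the standard fact that the locally defined flow $\Psi$ of such an $X$ preserves the leaves: since $X$ is a section of $\Delta$ and each leaf $L$ is weakly-embedded with $T_yL = \Delta_y$, the restriction $X|_L$ is a smooth vector field on $L$, whence the integral curve of $X$ through $y\in L$ is the integral curve of $X|_L$ and stays in $L$ by uniqueness. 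Granting this, $\pi\circ\Psi_\tau = \pi$ on the (open) domain of $\Psi_\tau$ for all small $\tau$, so $\Psi_\tau^*\alpha = \Psi_\tau^*\pi^*\beta = (\pi\circ\Psi_\tau)^*\beta = \pi^*\beta = \alpha$ there, and differentiating at $\tau = 0$ gives $\cl L_X\alpha = 0$. For horizontality, which is pointwise, I would fix $x\in\dom X$ and arbitrary $v_2,\dots,v_k\in T_xM$, choose a plot $Q\colon V\to M$ with $V\subseteq\RR^{k-1}$ a neighbourhood of $0$, $Q(0) = x$, and $\partial Q/\partial r_i(0) = v_{i+1}$, and form the plot $P(\tau,r) := \Psi_\tau(Q(r))$ on a neighbourhood of $0$ in $\RR\times V$. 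By leaf-preservation $\pi\circ P(\tau,r) = \pi(Q(r))$ is independent of $\tau$, so $\alpha(P) = \beta(\pi\circ P) = \pr_2^*\big(\beta(\pi\circ Q)\big)$ has no $d\tau$-component; evaluating $\alpha(P)$ at $0$ on $(\partial_\tau,\partial_{r_1},\dots,\partial_{r_{k-1}})$ and using $dP_0(\partial_\tau) = X_x$, $dP_0(\partial_{r_i}) = v_{i+1}$ yields $\alpha_x(X_x,v_2,\dots,v_k) = 0$; since the $v_i$ and $x$ were arbitrary, $\iota_X\alpha = 0$.

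The main obstacle is thus the leaf-preservation property of flows of sections of $\Delta$ — which rests on the weakly-embedded (initial) nature of the leaves and uniqueness of integral curves — together with the mild care required because these flows are only locally defined, so all statements live on suitable open (equivalently, D-open) domains. Everything else is bookkeeping with pullbacks and the identification of diffeological with classical forms on the manifold $M$.
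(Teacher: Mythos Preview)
Your proposal is correct and follows essentially the same approach as the paper: for (b) both arguments reduce to $\pi\circ s=\pi\circ t$ (the paper routes this through the relation groupoid $M\times_\pi M$, which is the same observation), and for (a) both exploit that the flow $\Psi$ of a section $X$ preserves leaves to compare the two plots $\Psi$ and $\pr_2$ on the flow domain, reading off $\iota_X\alpha=0$ and $\cl L_X\alpha=0$ from $\Psi^*\alpha=\pr_2^*\alpha$. Your explicit justification of leaf-preservation via weak-embeddedness is a point the paper takes for granted, so your write-up is slightly more careful there.
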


\begin{proof}\
  \begin{enumeratea}
  \item Suppose $\beta \in \Omega^k(M/\cl F)$ and set $\alpha := \pi^*\beta$. For any $X \in \Gamma_{\text{loc}}(\Delta)$, we must show $\iota_X \alpha = 0$ and $\cl{L}_X\alpha = 0$.  By Proposition \ref{prop:3}, for any plots $P,Q:U \to M$ such that $\pi \circ P = \pi \circ Q$, we have $P^*\alpha = Q^*\alpha$.  We can in fact replace the domain of $P,Q$ with any manifold, and thus set
     \begin{align*}
       P &:= \Phi:\cl D \to M \text{ to be the flow of } X, \quad (t, p) \mapsto \Phi^t(p) = \Phi^{(p)}(t)\\
       Q &:= \pr_2:\cl D \to M, \quad (t, p) \mapsto p.
     \end{align*}
     Since $\Phi^t(p)$ and $p$ always share a leaf, $\pi \circ \Phi = \pi \circ \pr_2$, and therefore $\Phi^*\alpha = \pr_2^*\alpha$. For $(t, p) \in \cl D$, we identify $T_{(t, p)}\cl D$ with $\RR \oplus T_pM$. Under this identification, for every $k$ vectors $v_1, \ldots, v_k\in T_pM$, at $t = 0$ we have

     \begin{align*}
       (\Phi^*\alpha)_{(0, p)}(1 \oplus v_1, v_2, \ldots, v_k) &= 
       \alpha_p(X_p +v_1, v_2, \ldots, v_k)\\
       (\pr_2^*\alpha)_{(0, p)}(1 \oplus v_1, v_2, \ldots, v_k) &= \alpha_p(v_1, v_2, \ldots, v_k).
     \end{align*}

     This implies $\iota_{X_p}\alpha_p = 0$.  We also have for each $t$, at $\vec v = (v_1, \ldots, v_k)$,

     \begin{equation*}
       \alpha_p(\vec v) = (\pr_2^*\alpha)_{(p, t)}(\vec v) = (\Phi^*\alpha)_{(t, p)}(\vec v) = ((\Phi^t)^*\alpha)_p(\vec v),
     \end{equation*}

     hence 
     \begin{equation*}
       0 = \frac{d}{dt}\Big|_{t=0} \alpha_p(\vec v) = \frac{d}{dt}\Big|_{t=0} ((\Phi^t)^*\alpha)_p(\vec v) = (\cl L_X\alpha)_p(\vec v)       
     \end{equation*}

     Therefore $\iota_X\alpha$ and $\cl L_X\alpha$ vanish.
    \item This argument is Corollary 3.6 in \cite{W2015}. Suppose $\beta \in \Omega^k(M/\cl G)$ and set $\alpha := \pi^*\beta$. Then by Remark \ref{rem:2}, $\alpha$ is basic with respect to the relation groupoid $M \tensor[]{\times}{_\pi} M \rra M$. So then $\pr_1^*\alpha - \pr_2^*\alpha = 0$. Pulling this back by $(s,t):\cl G \to M \tensor[]{\times}{_\pi} M$ yields $0 = (s,t)^*(\pr_1^*\alpha - \pr_2^*\alpha)$, and thus $s^*\alpha = t^*\alpha$.  
  \end{enumeratea}
\end{proof}

For this next proposition, we adapt the proof from \cite{HSZ2019}, which deals only with regular foliations.

\begin{proposition}
  \label{prop:5}
  Let $\cl G \rra M$ be a Lie groupoid with associated singular foliation $\cl F$. Every $\cl G$-basic form is $\cl F$-basic, and every $\cl F$-basic form is $\cl G^\circ$-basic.
\end{proposition}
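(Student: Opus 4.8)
The plan is to prove the two inclusions separately. The workhorse is the local picture at a unit $1_x\in\cl G$: writing $u\colon M\to\cl G$ for the unit embedding and $(\rho,\fk g)$ for the Lie algebroid of $\cl G$ (so that $\Delta=\rho(\fk g)$), there is a splitting
\[
  T_{1_x}\cl G=\fk g_x\oplus du_x(T_xM),
\]
on which $ds_{1_x}$ annihilates $\fk g_x$ and equals $(du_x)^{-1}$ on the second summand, while $dt_{1_x}$ restricts to the anchor $\rho\colon\fk g_x\to\Delta_x$ on the first summand and to $(du_x)^{-1}$ on the second. For the first inclusion, let $\alpha$ be $\cl G$-basic, so $s^*\alpha=t^*\alpha$. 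Evaluating this identity at $1_x$ on a tuple $(\xi,du_x(v_2),\dots,du_x(v_k))$ with $\xi\in\fk g_x$ gives $0=\alpha_x(\rho(\xi),v_2,\dots,v_k)$; as $\rho(\xi)$ ranges over $\Delta_x$, this says $\alpha$ is horizontal. Since $d$ commutes with $s^*$ and $t^*$, the form $d\alpha$ is also $\cl G$-basic, hence horizontal by the same argument, so for every $X\in\Gamma_{\loc}(\Delta)$ we get $\iota_X\alpha=0$ and $\cl L_X\alpha=\iota_Xd\alpha+d\,\iota_X\alpha=0$; thus $\alpha$ is $\cl F$-basic.

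For the second inclusion, let $\alpha$ be $\cl F$-basic; I want $(s^*\alpha)_g=(t^*\alpha)_g$ for every $g\in\cl G^\circ$. The first ingredient is that equality already holds at units: since $\alpha$ is horizontal, every term in the expansion of $(t^*\alpha)_{1_x}$ that involves a $\fk g_x$-component vanishes (using $\rho(\fk g_x)=\Delta_x$), leaving $(t^*\alpha)_{1_x}=(s^*\alpha)_{1_x}$. The second ingredient concerns right-invariant vector fields: for a local section $\sigma$ of $\fk g$, let $\overrightarrow{\sigma}$ be the associated right-invariant vector field on $\cl G$, with (locally defined) flow $\Psi^\epsilon$. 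As $\overrightarrow{\sigma}$ is source-vertical we have $s\circ\Psi^\epsilon=s$, so $(\Psi^\epsilon)^*s^*\alpha=s^*\alpha$; and as $\overrightarrow{\sigma}$ is $t$-related to $X:=\rho(\sigma)$ we have $t\circ\Psi^\epsilon=\Phi^\epsilon\circ t$ with $\Phi^\epsilon$ the flow of $X$, hence $(\Psi^\epsilon)^*t^*\alpha=t^*(\Phi^\epsilon)^*\alpha=t^*\alpha$ because $\cl L_X\alpha=0$. So both $s^*\alpha$ and $t^*\alpha$ are invariant under the flows of all right-invariant vector fields.

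To finish, I would propagate the equality off the units, and this is the step at which source-connectedness is genuinely used, and the only point I expect to require care. Fix $g\in\cl G^\circ$ with $s(g)=x$; then $g$ lies in the connected component of $1_x$ in the source fibre $P_x$, namely $P_x\cap\cl G^\circ$, and the right-invariant vector fields span the tangent space of this manifold at every point. Hence the set of points of $P_x\cap\cl G^\circ$ reachable from $1_x$ by finite concatenations of their flows is open and closed, so it is everything; in particular there is a composition $F=\Psi_N^{\epsilon_N}\circ\cdots\circ\Psi_1^{\epsilon_1}$ of such flows, defined near $1_x$ and with values in $\cl G^\circ$ (being a continuous curve from $1_x$ within one source fibre), sending $1_x$ to $g$. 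By the second ingredient each factor preserves $s^*\alpha$ and $t^*\alpha$, hence so does $F$; pulling the unit identity $(s^*\alpha)_{1_x}=(t^*\alpha)_{1_x}$ back along the isomorphism $dF_{1_x}$ yields $(s^*\alpha)_g=(t^*\alpha)_g$. As $g$ was arbitrary, $s^*\alpha=t^*\alpha$ on $\cl G^\circ$, so $\alpha$ is $\cl G^\circ$-basic. The only things needing care in this last step are that the reaching argument stays inside $\cl G^\circ$ and uses only locally defined flows, since right-invariant vector fields may fail to be complete; everything else is bookkeeping with the splitting at the units and the standard relatedness properties of right-invariant vector fields, for which I would cite \cite{MM2003}.
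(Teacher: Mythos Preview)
Your proof is correct and follows essentially the same route as the paper: the splitting $T_{1_x}\cl G=\fk g_x\oplus T_xM$ to compare $s^*\alpha$ and $t^*\alpha$ at units, and the $s$- and $t$-relatedness of right-invariant vector fields to propagate along source fibers by flows. The only minor variation is in deriving invariance for the first inclusion: the paper uses $\cl L_{\tilde\sigma}t^*\alpha=t^*\cl L_{\rho(\sigma)}\alpha$ together with $t$ being a submersion, whereas you note that $d\alpha$ is again $\cl G$-basic, reapply the horizontality argument to it, and invoke Cartan's formula; both are equally short.
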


\begin{proof}
  Let $(\fk g, \rho)$ be the Lie algebroid of $\cl G$, where $\fk g = (\ker ds)|_M$ and $\rho = dt$. To to prove a form is $\cl F$-basic, it suffices to only test against vector fields in $\rho(\Gamma(\fk g))$. For a section $\sigma$ of $\fk g$, denote the corresponding right-invariant vector field on $\cl G$ by $\tilde \sigma$. For an arrow $g:x \mapsto y$, we have

  \begin{align*}
    ds(\tilde \sigma_g) &= ds(dR_g(\tilde \sigma_{1_y})) &&\text{by definition of }\tilde \sigma \\
                                                        &= d(s \circ R_g)(\tilde \sigma_{1_y}) \\
    &= 0 &&\text{because } s \circ R_g \text{ is constant}.
  \end{align*}
  Similarly, $dt(\tilde \sigma_g) = dt(\tilde \sigma_{1_y})$.  Therefore
  \begin{equation*}
    \tilde \sigma \sim_s 0 \text{ and } \tilde \sigma \sim_t \rho(\sigma),
  \end{equation*}
  which implies that for any form $\alpha$ on $M$,
  \begin{equation}
    \label{eq:3}
    \cl{L}_{\tilde{\sigma}}s^*\alpha = 0, \text{ and } \cl{L}_{\tilde{\sigma}} t^*\alpha = t^*\cl{L}_{\rho(\sigma)}\alpha.
  \end{equation}
  Now we take the splitting $T\cl G|_M = \fk g \oplus TM$, which allows us to write
  \begin{equation}
    \label{eq:4}
    \begin{split}
      (s^*\alpha)_{1_x}(\xi_1+w_1, \ldots, \xi_k+w_k) &= \alpha_x(w_1, \ldots, w_k) \\
      (t^*\alpha)_{1_x}(\xi_1+w_1, \ldots, \xi_k+w_k) &= \alpha_x(\rho(\xi_1)+w_1, \ldots, \rho(\xi_k)+w_x).
    \end{split}
  \end{equation}
  We will now prove the proposition.
  \begin{itemize}
  \item Suppose $\alpha$ is $\cl G$-basic. Then $s^*\alpha = t^*\alpha$, and so setting $v_i = 0$ for $i > 1$ and $w_1 = 0$ in equation \eqref{eq:4}, we get
    \begin{equation*}
      \alpha_x(0, w_2, \ldots, w_k) = (\iota_{\rho(\xi_1)}\alpha)_x(w_2, \ldots, w_k).
    \end{equation*}
    The left side is always $0$, so we get $\iota_{\rho(\sigma)}\alpha = 0$. For invariance, by equation \eqref{eq:3},
    \begin{equation*}
      0 = \cl L_{\tilde \sigma}s^*\alpha = \cl L_{\tilde \sigma} t^*\alpha = t^*\cl L_{\rho(\sigma)}\alpha,
    \end{equation*}
    and since $t$ is a submersion, we get $\cl L_{\rho(\sigma)}\alpha = 0$. Therefore $\alpha$ is $\cl F$-basic.
  \item Now suppose $\alpha$ is $\cl F$-basic. The fact $\iota_{\rho(\sigma)}\alpha = 0$ implies
    \begin{equation*}
      \alpha_x(\rho(\xi_1)+w_1, \ldots, \rho(\xi_k)+w_k) = \alpha_x(w_1, \ldots, w_k),
    \end{equation*}
    so by equation \eqref{eq:4} we get that $s^*\alpha = t^*\alpha$ at points $1_x \in M$. Furthermore the assumption $\cl L_{\rho(\sigma)}\alpha = 0$ combined with equation \eqref{eq:3} gives
    \begin{equation*}
      \cl L_{\tilde \sigma} s^*a = 0 = t^*\cl L_{\rho(\sigma)}\alpha = \cl L_{\tilde \sigma}t^*\alpha.
    \end{equation*}
    Therefore $s^*\alpha$ and $t^*\alpha$ are invariant under the flows of all the $\tilde \sigma$. Now notice that these vector fields span $\ker ds$, which is an involutive subbundle of $T\cl G$ that foliates $\cl G$ by the connected components of the source-fibers. In particular, we can connect any arrow in the component $1_x$ to $1_x$ by travelling along the flows of the $\tilde \sigma$.  Therefore $s^*\alpha = t^*\alpha$ on the union of connected components of the identity arrows. This is exactly the arrow space of $\cl G^\circ \rra M$, thus $\alpha$ is $\cl G^\circ$-basic. 
  \end{itemize}
\end{proof}

The following proposition and its proof are from \cite{W2015}, Proposition 3.9, but can also be found as Lemma 5.3.8 in \cite{HSZ2019}.
\begin{proposition}
  \label{prop:6}
  Let $\cl{G}$ and $\cl{H}$ be Morita equivalent Lie groupoids, witnessed by an invertible bibundle $P:\cl{G} \to \cl{H}$. There is an isomorphism $P^*:\cplx{b}{H_0, \cl H} \to \cplx{b}{G_0, \cl G}$ defined uniquely by the condition that $a_R^*\alpha = a_L^*P^*\alpha$ (where $a_R$ and $a_L$ are the anchor maps for the actions).
\end{proposition}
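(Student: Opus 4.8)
The plan is to build $P^*$ by descending forms along the left anchor $a_L \colon P \to G_0$. Being a principal right $\cl H$-bundle, $a_L$ is a surjective submersion, hence a subduction, so $G_0$ carries the quotient diffeology it induces, and $a_L^* \colon \cplx{}{G_0} \to \cplx{}{P}$ is injective by Lemma \ref{lem:1}. This gives uniqueness for free: there is at most one form $\beta$ on $G_0$ with $a_L^*\beta = a_R^*\alpha$, and one sets $P^*\alpha := \beta$. It then remains to check three things: (i) such a $\beta$ exists; (ii) $\beta$ is $\cl G$-basic; (iii) $\alpha \mapsto \beta$ is an isomorphism of complexes.

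For (i) I would fix $\alpha \in \cplx{b}{H_0, \cl H}$, put $\gamma := a_R^*\alpha \in \cplx{}{P}$, and show $\gamma$ lies in the image of $a_L^*$. By Proposition \ref{prop:3} in the form of Remark \ref{rem:2} (applied to the subduction $a_L$), this holds if and only if $\pr_1^*\gamma = \pr_2^*\gamma$ on the fibre product $P \times_{a_L} P \rra P$. Since $a_L$ is a principal right $\cl H$-bundle, Definition \ref{def:16} tells us $(p,h) \mapsto (p, p\cdot h)$ is a diffeomorphism $P \rtimes \cl H \to P \times_{a_L} P$, under which the two projections $\pr_1, \pr_2 \colon P \times_{a_L} P \to P$ correspond respectively to $(p,h)\mapsto p$ and to the action map $\mu\colon (p,h)\mapsto p\cdot h$ on $P\rtimes\cl H$ (equivalently, $(a_R,\pr_2)\colon P\rtimes\cl H\to\cl H$ is a morphism of Lie groupoids). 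So it is enough to verify $\mu^* a_R^*\alpha = \pr_1^* a_R^*\alpha$ on $P\rtimes\cl H$. Now for $(p,h)$ in $P\rtimes\cl H$ one has $a_R(p\cdot h) = s(h)$ from the right-action anchor law, and $a_R(p) = t(h)$ from the defining fibre-product condition; writing $\pr_2(p,h) = h$, this says $a_R\circ\mu = s\circ\pr_2$ and $a_R\circ\pr_1 = t\circ\pr_2$. Hence $\mu^* a_R^*\alpha = \pr_2^* s^*\alpha$ and $\pr_1^* a_R^*\alpha = \pr_2^* t^*\alpha$, and these agree precisely because $\alpha$ is $\cl H$-basic, $s^*\alpha = t^*\alpha$. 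Thus $\gamma$ descends to a diffeological form $\beta$ on $G_0$, which we regard as an ordinary differential form since $G_0$ is a classical manifold.

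For (ii) I would run the mirror argument using the left $\cl G$-action on $P$ with anchor $a_L$. Its action groupoid $\cl G \ltimes P$ has arrow space $\cl G \tensor[_s]{\times}{_{a_L}} P$, and the projection $q\colon (g,p)\mapsto g$ is the pullback of the surjective submersion $a_L$ along $s$, hence itself a surjective submersion, so $q^*$ is injective on forms over $\cl G$. It suffices to show $q^* s^*\beta = q^* t^*\beta$. Writing $\lambda(g,p) = g\cdot p$ and $\pr_2(g,p) = p$, the anchor laws give $s\circ q = a_L\circ\pr_2$ and $t\circ q = a_L\circ\lambda$, so $q^* s^*\beta = \pr_2^* a_L^*\beta = \pr_2^* a_R^*\alpha$ while $q^* t^*\beta = \lambda^* a_L^*\beta = \lambda^* a_R^*\alpha = (a_R\circ\lambda)^*\alpha$. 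Since $a_R$ is $\cl G$-invariant, $a_R\circ\lambda = a_R\circ\pr_2$, so both pullbacks equal $\pr_2^* a_R^*\alpha$, and $s^*\beta = t^*\beta$ follows by injectivity of $q^*$. That $P^*$ is linear and commutes with $d$ and $\wedge$ then follows from injectivity of $a_L^*$ and the defining identity $a_L^* P^*\alpha = a_R^*\alpha$; for instance $a_L^*(dP^*\alpha) = d(a_R^*\alpha) = a_R^*(d\alpha) = a_L^* P^*(d\alpha)$, and injectivity of $a_L^*$ forces $dP^*\alpha = P^*(d\alpha)$.

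For (iii), the inverse bibundle $P^{-1}\colon\cl H\to\cl G$ has underlying manifold $P$ with the roles of $a_L$ and $a_R$ interchanged, and $a_R\colon P\to H_0$ is a principal right $\cl G$-bundle, so the same construction produces $(P^{-1})^*\colon\cplx{b}{G_0,\cl G}\to\cplx{b}{H_0,\cl H}$ characterized by $a_R^*\bigl((P^{-1})^*\beta\bigr) = a_L^*\beta$. Then for $\alpha\in\cplx{b}{H_0,\cl H}$ we have $a_R^*\bigl((P^{-1})^* P^*\alpha\bigr) = a_L^*(P^*\alpha) = a_R^*\alpha$, and since $a_R^*$ is injective (again $a_R$ is a surjective submersion), $(P^{-1})^* P^*\alpha = \alpha$; the identity $P^*(P^{-1})^* = \id$ is symmetric, so $P^*$ is an isomorphism. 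I expect step (i) to be the main obstacle: getting the identification of $P \times_{a_L} P$ with $P \rtimes \cl H$ exactly right, matching the two projections with the action map $\mu$ and with $\pr_1$, and transporting the resulting equality to $s^*\alpha = t^*\alpha$ on $\cl H$. Once those structural identities are pinned down, the remaining steps are routine.
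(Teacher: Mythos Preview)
Your proof is correct and follows essentially the same route as the paper's: descend $a_R^*\alpha$ along the surjective submersion $a_L$ by identifying the relation groupoid $P\times_{a_L}P$ with the action groupoid $P\rtimes\cl H$ (the paper's Remark~\ref{rem:5}) and using $\cl H$-basicness; then show $\cl G$-basicness of the descended form via $\cl G$-invariance of $a_R$ and injectivity of $\pr_1^*$ (your $q^*$); and finally invert via $(P^{-1})^*$. Your write-up is somewhat more explicit about the structural identities $a_R\circ\mu = s\circ\pr_2$, $a_R\circ\pr_1 = t\circ\pr_2$ and about compatibility with $d$ and $\wedge$, but the argument is the same.
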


\begin{proof}
      The bibundle $P:\cl G \to \cl H$ gives the following commutative diagram.

    \begin{equation*}
      \begin{tikzcd}
        & \cl G \tensor[_s]{\times}{_{a_L}} P \ar[dr, shift left, "\mu_L"] \ar[dr, shift right, "\pr_2"'] \ar[dl, "\pr_1"] & & P\tensor[_{a_R}]{\times}{_t} \cl H \ar[dr, "\pr_2"] \ar[dl, shift left, outer sep=3, "\mu_R"'] \ar[dl, shift right, outer sep=3, "\pr_1" ] & \\
        \cl G \ar[dr, shift left, "t"] \ar[dr, shift right, "s"'] & &P \ar[dr, "a_R"] \ar[dl, "a_L"] & & \cl H \ar[dl, shift left, outer sep=3, "s"'] \ar[dl, shift right, outer sep=3, "t"] \\
        & G_0 & & H_0
      \end{tikzcd}
    \end{equation*}

    Let $\alpha \in \cplx{b}{M, \cl H}$. The pullback $a_R^*\alpha$ is $P \rtimes H$-basic, since by commutativity and $\cl H$-basic,
    \begin{equation*}
      \mu_R^* a_R^*\alpha = \pr_2^*s^*\alpha = \pr_2^*t^*\alpha = \pr_1^*a_R^*\alpha.
    \end{equation*}
    By Remark \ref{rem:5}, the Lie groupoid $P \rtimes \cl H$ is isomorphic to $P \tensor[]{\times}{_{G_0}} P$, so then $a_R^*\alpha$ is also $P \tensor[]{\times}{_{G_0}} P$-basic. By Remark \ref{rem:2}, this is equivalent to $a_R^*\alpha = a_L^*\beta$, for some $\beta \in \cplx{}{G_0}$. Note $\beta$ is unique because $a_L$ is a surjective submersion. We say $P^*\alpha := \beta$. We claim $\beta$ is $\cl G$-basic. First, observe that by commutativity, $a_R^*\alpha = a_L^*\beta$, and $a_R^*\alpha$ being $\cl G \ltimes P$-basic:
    \begin{equation*}
      \pr_1^* s^*\beta = \pr_2^*a_L^*\beta = \pr_2^* a_R^*\alpha = \mu_L^* a_R^*\alpha = \mu_L^* a_L^*\beta = \pr_1^*t^*\beta.
    \end{equation*}
    Then $s^*\beta = t^*\beta$ because $a_L$, hence $\pr_1$, is a surjective submersion.

    It is evident that $P^*$ is well-defined, and a homomorphism of complexes. To see it is an isomorphism, observe that its inverse is $(P^{-1})^*$.
  \end{proof}

  \begin{corollary}
    \label{cor:2}
    The pullback $\pi_{\cl G}^*$ surjects onto $\cl G$-basic forms if and only if $\pi_{\cl H}^*$ is onto $\cl H$-basic forms.
  \end{corollary}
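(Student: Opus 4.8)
The plan is to exhibit a commuting square of morphisms of complexes
\[
  \begin{tikzcd}
    \cplx{}{H_0/\cl H} \ar[r, "\ol{P}^*"] \ar[d, "\pi_{\cl H}^*"'] & \cplx{}{G_0/\cl G} \ar[d, "\pi_{\cl G}^*"] \\
    \cplx{b}{H_0, \cl H} \ar[r, "P^*"'] & \cplx{b}{G_0, \cl G}
  \end{tikzcd}
\]
in which $P^*$ is the isomorphism of Proposition \ref{prop:6} and $\ol{P}^*$ is pullback along a diffeomorphism $\ol P\colon G_0/\cl G \to H_0/\cl H$ of orbit spaces (so $\ol{P}^*$ is an isomorphism as well). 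Granting such a square, $\mathrm{im}(\pi_{\cl G}^*) = \pi_{\cl G}^*\bigl(\mathrm{im}(\ol{P}^*)\bigr) = P^*\bigl(\pi_{\cl H}^*(\cplx{}{H_0/\cl H})\bigr) = P^*\bigl(\mathrm{im}(\pi_{\cl H}^*)\bigr)$, and since $P^*$ is an isomorphism the left-hand side is all of $\cplx{b}{G_0,\cl G}$ if and only if $\mathrm{im}(\pi_{\cl H}^*)$ is all of $\cplx{b}{H_0,\cl H}$, which is the assertion.

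To construct $\ol P$ I would use that a Morita bibundle $P\colon\cl G\to\cl H$ realizes both base manifolds as quotients of $P$. Indeed $a_L\colon P\to G_0$ is a principal right $\cl H$-bundle, hence a surjective submersion whose fibres are exactly the $\cl H$-orbits, so it presents $G_0$ as the diffeological quotient $P/\cl H$; dually, since $P$ is a Morita equivalence, $a_R$ presents $H_0$ as $\cl G\backslash P$. Because the $\cl G$- and $\cl H$-actions on $P$ commute, iterating quotients gives
\[
  G_0/\cl G \;=\; (P/\cl H)/\cl G \;=\; P/\!\sim\; \;=\; (\cl G\backslash P)/\cl H \;=\; H_0/\cl H,
\]
where $p\sim p'$ means $p'\in\cl G\cdot p\cdot\cl H$; the resulting diffeomorphism $\ol P$ is characterized by $\ol P\circ\pi_{\cl G}\circ a_L = \pi_{\cl H}\circ a_R$. (That Morita equivalent Lie groupoids have diffeomorphic orbit spaces is standard, and is also recorded in \cite{W2015, HSZ2019}; the only point needing attention is that the two iterated quotient diffeologies on $P$ agree, which follows at once from the definition of the quotient diffeology.)

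For commutativity of the square, fix $\beta\in\cplx{}{H_0/\cl H}$ and set $\alpha:=\pi_{\cl H}^*\beta$, which is $\cl H$-basic by Proposition \ref{prop:4}(b). The forms $P^*\alpha$ and $\pi_{\cl G}^*\ol{P}^*\beta$ are both $\cl G$-basic on $G_0$, and $a_L$ is a surjective submersion, so they coincide as soon as their pullbacks along $a_L$ agree. By the defining property of $P^*$ from Proposition \ref{prop:6}, $a_L^*P^*\alpha = a_R^*\alpha = a_R^*\pi_{\cl H}^*\beta = (\pi_{\cl H}\circ a_R)^*\beta$; on the other hand $a_L^*\pi_{\cl G}^*\ol{P}^*\beta = (\ol P\circ\pi_{\cl G}\circ a_L)^*\beta = (\pi_{\cl H}\circ a_R)^*\beta$ by the characterization of $\ol P$. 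Hence $P^*\pi_{\cl H}^*\beta = \pi_{\cl G}^*\ol{P}^*\beta$, so the square commutes (it is a square of cochain maps since $P^*$, and all pullbacks, are morphisms of complexes). I expect the only genuinely non-formal ingredient to be the construction of $\ol P$ — checking that the two iterated quotient diffeologies on $P$ coincide, so that $\ol P$ really is a diffeomorphism; once that and the identity $\ol P\circ\pi_{\cl G}\circ a_L = \pi_{\cl H}\circ a_R$ are in hand, the remainder is the diagram chase above.
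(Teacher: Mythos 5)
Your proof is correct and is essentially the paper's argument: both hinge on the diffeomorphism of orbit spaces induced by the bibundle (the paper cites it as Theorem 3.8 of \cite{W2015}, with $\Psi = \ol P^{-1}$) and verify compatibility with $P^*$ by pulling back along the surjective submersions $a_L$ and $a_R$. The only difference is packaging — you record the identity $P^*\circ\pi_{\cl H}^* = \pi_{\cl G}^*\circ\ol P^*$ as a commuting square and read off both directions at once, whereas the paper runs the same computation in one direction and appeals to $(P^{-1})^*$ for the converse.
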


  \begin{proof}
    The map $\Psi:H_0/\cl H \to G_0/\cl G$ defined by $\pi_{\cl H}(y) \mapsto \pi_{\cl G}(a_L(a_R^{-1}(y)))$ is a well-defined diffeological diffeomorphism (\cite{W2015}, Theorem 3.8). Now assume $\pi_{\cl G}^*$ surjects, and take $\alpha \in \cplx{b}{M, \cl H}$. Set $\beta := P^*\alpha$. By assumption there is some $\ol \beta \in \cplx{}{G_0/\cl G}$ with $\pi_{\cl G}^*\ol \beta = \beta$. Then
    \begin{equation*}
      a_L^*\beta = a_L^*\pi_{\cl G}^*\ol \beta = a_R^*(\pi_{\cl H}^*\Psi^*\ol \beta).
    \end{equation*}
    But $a_L^* \beta$ is also $a_R^*\alpha$, and since $a_R$ is a surjective submersion, we get $\alpha = \pi_{\cl H}^*\Psi^*\ol \beta$. In other words, $\pi_{\cl H}^*$ also surjects. For the converse direction, work with $(P^{-1})^*$.
  \end{proof}

  Now, we have a final lemma.

  \begin{lemma}
    \label{lem:2}
     Let $\cl G \rra M$ be an \'{e}tale Lie groupoid with countably generated associated pseudogroup $\Psi(\cl G)$. Then the pullback by $\pi:M \to M/\cl G$ is onto $\cl G$-basic forms.
   \end{lemma}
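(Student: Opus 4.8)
The plan is to show that a $\cl G$-basic form $\alpha$ on $M$ descends to a diffeological form on $M/\cl G$ by verifying the criterion of Proposition \ref{prop:3}: if $P, Q : U \to M$ are plots with $\pi \circ P = \pi \circ Q$, then $\alpha(P) = \alpha(Q)$. Since this is a local condition on $U$ and forms are determined by their values in charts, I may assume $U$ is connected and work near an arbitrary point $r_0 \in U$. The hypothesis $\pi \circ P = \pi \circ Q$ means that for each $r$, the points $P(r)$ and $Q(r)$ lie in the same $\cl G$-orbit; since $\cl G$ is \'etale, the reference to the countably generated pseudogroup $\Psi(\cl G)$ is exactly what will let me turn this ``same orbit'' condition into a usable local statement. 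First I would establish the following local lifting claim: given a plot $Q : U \to M$ and a point $r_0$, after shrinking $U$ there are countably many elements $\varphi_n \in \Psi(\cl G)$ (drawn from a fixed countable generating set, or words in it) such that for every $r$ near $r_0$, $P(r) = \varphi_n(Q(r))$ for some $n$ with $Q(r) \in \dom \varphi_n$. This follows because each germ of an arrow $g : Q(r) \mapsto P(r)$ extends, via a bisection, to an element of $\Psi(\cl G)$ that agrees with the ``transport $Q$ to $P$'' on a neighbourhood; the generating set being countable makes the collection of relevant $\varphi_n$ countable.

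With this in hand, cover $U$ (near $r_0$) by the sets $U_n := \{r : Q(r) \in \dom\varphi_n \text{ and } P(r) = \varphi_n(Q(r))\}$. Each $U_n$ is the preimage under the plot $Q$ of the D-open set $\dom\varphi_n$ intersected with the (closed, by continuity) locus where $P = \varphi_n \circ Q$; the key point is that the $U_n$ cover $U$ and that on the \emph{interior} of each $U_n$ we have $P = \varphi_n \circ Q$ as maps into $M$, hence $\alpha(P) = \alpha(\varphi_n \circ Q) = (\varphi_n^* \alpha)(Q)$ on that interior. Now invariance enters: because $\alpha$ is $\cl G$-basic and each $\varphi_n = t \circ \sigma$ comes from a bisection $\sigma$ of the \'etale groupoid $\cl G$, one has $\varphi_n^* \alpha = \alpha$ on $\dom\varphi_n$ — this is the standard fact that $\cl G$-basic forms on an \'etale groupoid are exactly the $\Psi(\cl G)$-invariant forms, which I would record as a short separate step using $s^*\alpha = t^*\alpha$ together with the fact that $s$ restricted to the image of a bisection is a diffeomorphism onto its domain with inverse related to $t \circ \sigma$. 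Therefore $\alpha(P) = \alpha(Q)$ on the interior of each $U_n$.

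It remains to pass from ``$\alpha(P) = \alpha(Q)$ on $\bigcup_n \operatorname{int} U_n$'' to ``$\alpha(P) = \alpha(Q)$ on all of $U$,'' and this is where I expect the main obstacle to lie. The naive worry is that $\bigcup_n \operatorname{int}U_n$ might fail to be dense — there could be points $r$ that lie in no $\operatorname{int}U_n$, only in various boundaries. The resolution is a Baire category argument: $U$ (a nonempty open subset of a Cartesian space) is a Baire space, and $U = \bigcup_n U_n$ expresses it as a countable union of sets; I would argue each $U_n$ is closed in $U$ (its defining conditions $Q(r)\in\overline{\dom\varphi_n}$ and $P(r)=\varphi_n(Q(r))$ are closed, modulo a mild technical adjustment to make $\dom\varphi_n$ behave — e.g.\ pre-shrinking the bisections so their domains are, say, precompact with controlled closure, or replacing $U_n$ by a genuinely closed variant whose union is still $U$), so by Baire some $U_{n_0}$ has nonempty interior, and then one runs an induction / open-dense argument on the complement to conclude $\bigcup_n \operatorname{int}U_n$ is dense and open in $U$. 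Since $\alpha(P)$ and $\alpha(Q)$ are genuine (continuous) differential forms on $U$ agreeing on a dense set, they agree everywhere. Hence $\alpha(P) = \alpha(Q)$, so by Proposition \ref{prop:3} $\alpha$ lies in the image of $\pi^*$, which is what we wanted; combined with Proposition \ref{prop:4}(b) and Lemma \ref{lem:1} this gives that $\pi^*$ is an isomorphism onto $\cl G$-basic forms. The delicate bookkeeping — choosing the bisections and their domains so that the $U_n$ are honestly closed and still cover, and making the density argument airtight — is the part I would expect to require the most care.
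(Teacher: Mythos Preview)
Your proposal is correct and follows essentially the same route as the paper: show $\cl G$-basic implies $\Psi(\cl G)$-invariant via $f^*\alpha=\sigma^*t^*\alpha=\sigma^*s^*\alpha=\alpha$, cover $U$ by the countably many closed sets $C_I=\{r:\ f_I(P(r))=Q(r)\}$ indexed by finite words $I$ in a countable generating family, apply Baire to get $\bigcup_I\operatorname{int}C_I$ dense, and use invariance plus continuity to conclude $P^*\alpha=Q^*\alpha$. The closedness issue you flag is real but, as you suspect, is handled by the routine adjustment you describe; the paper simply asserts it.
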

   
   \begin{proof}
         See Appendix \ref{sec:pseudogroups} for the relevant facts about pseudogroups. Let $\alpha \in \cplx{b}{M, \cl G}$. Take $P, Q:U \to M$ such that $\pi \circ P = \pi \circ Q$. By Proposition \ref{prop:3}, it suffices to show $P^*\alpha = Q^*\alpha$.  First, note that $\alpha$ is $\Psi(\cl G)$-invariant, since for any $f = t \circ \sigma \in \Psi(\cl G)$, we have
    \begin{equation*}
      f^*\alpha =  \sigma^*t^*\alpha = \sigma^*s^*\alpha = \id^*\alpha = \alpha.
    \end{equation*}
    Say $\{f_i\}_{i=1}^\infty$ generates $\Psi(\cl G)$. For each tuple $I:=(i_1, \ldots, i_N)$, define $f_I := f_{i_1} \circ \cdots \circ f_{i_N}$, and set $C_I:= \{r \in U \mid f_I(P(r)) = Q(r)\}$.  Each $C_I$ is closed in $U$, and we claim $U \subseteq \bigcup_I C_I$ (hence equality holds). Indeed, for any $r \in U$, we have $\pi(P(r)) = \pi(Q(r))$, so there is an arrow $P(r) \mapsto Q(r)$. Taking its image under $\Eff$ (see the discussion preceding Definition \ref{def:24}), this gives some $f \in \Psi(\cl G)$ such that $f(P(r)) = Q(r)$. Using our generating family for $\Psi(\cl G)$, we can write $f = f_I$ locally near $r$ for some $I$, hence $r \in C_I$.

    By the Baire category theorem, $\bigcup_I \text{int}(C_I)$ is open and dense in $U$. But on each $\text{int}(C_I)$, we have $f_I \circ P = Q$, so by $\Psi(\cl G)$-invariance of $\alpha$,
    \begin{equation*}
      P^*\alpha = P^*f^*\alpha = (f_I \circ P)^*\alpha = Q^*\alpha.
    \end{equation*}
    As this holds on the open dense subset $\bigcup_I \text{int}(C_I)$, by continuity $P^*\alpha = Q^*\alpha$ on all of $U$, as required.
  \end{proof}

We may now give the formal statement and proof of our result \ref{item:1}.

  \begin{theorem}
    \label{thm:5}
    Suppose $(M, \cl F)$ is a regular foliation. Equip $M$ and $M/\cl F$ with their manifold and quotient diffeology, respectively. The quotient map $\pi:M \to M/\cl F$ is diffeologically smooth, and its pullback restricts to an isomorphism from diffeological forms on $M/\cl F$ to $\cl F$-basic forms on $M$. In other words, $\pi^*:\cplx{}{M/\cl{F}} \to \cplx{b}{M, \cl F}$ is an isomorphism (c.f.\ \cite{H2011}).
  \end{theorem}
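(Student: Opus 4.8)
The plan is to assemble the proof from the machinery already developed in this section, following exactly the outline given before Proposition \ref{prop:4}. First I would observe that the quotient map $\pi : M \to M/\cl F$ is diffeologically smooth by the very definition of the quotient diffeology (Definition \ref{def:7}), since $\id_M$ locally lifts any plot that is a composite $\pi \circ Q$, and more to the point the identity plot exhibits $\pi$ as smooth. Next, injectivity of $\pi^* : \cplx{}{M/\cl F} \to \cplx{}{M}$ is immediate from Lemma \ref{lem:1} applied to the relation whose quotient is $M/\cl F$, and the fact that $\pi^*$ lands inside $\cplx{b}{M, \cl F}$ is Proposition \ref{prop:4}(a). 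So the only real content is surjectivity of $\pi^*$ onto $\cl F$-basic forms.

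For surjectivity I would reduce step by step. By Proposition \ref{prop:5}, applied to the holonomy groupoid $\Hol \rra M$ (which is source-connected, so $\Hol^\circ = \Hol$, and whose orbits are exactly the leaves of $\cl F$), the $\cl F$-basic forms coincide with the $\Hol$-basic forms; hence it suffices to show $\pi^* : \cplx{}{M/\cl F} \to \cplx{b}{M, \Hol}$ is onto, noting $M/\cl F = M/\Hol$ as diffeological spaces since the orbit equivalence relations agree. Now fix a complete transversal $\iota : S \hookrightarrow M$, giving the étale groupoid $\Hol_S \rra S$, which is Morita equivalent to $\Hol$ via the bibundle induced by the refinement $\iota$ (Remark \ref{rem:6}). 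By Corollary \ref{cor:2}, $\pi^*$ for $\Hol$ surjects onto $\Hol$-basic forms if and only if $\pi_S^* : \cplx{}{S/\Hol_S} \to \cplx{b}{S, \Hol_S}$ surjects onto $\Hol_S$-basic forms. Finally, $\Hol_S$ is étale with countably generated associated pseudogroup $\Psi(\Hol_S)$ by Theorem \ref{thm:4}, so Lemma \ref{lem:2} gives exactly that surjectivity. Chaining these equivalences back up shows $\pi^*$ is an isomorphism onto $\cplx{b}{M, \cl F}$.

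The steps themselves are each quoted results, so the main thing to be careful about is bookkeeping: verifying that the diffeological quotients $M/\cl F$, $M/\Hol$, and (through the diffeomorphism $\Psi$ of Corollary \ref{cor:2}) $S/\Hol_S$ are identified in a way compatible with the pullback maps, and that "$\cl F$-basic" genuinely means the same as "$\Hol$-basic" — this is where source-connectedness of $\Hol$ is essential, since Proposition \ref{prop:5} only gives $\cl G^\circ$-basic in general. I expect the one genuinely substantive ingredient behind the scenes is Theorem \ref{thm:4} (countable generation of the holonomy pseudogroup), which is where regularity of the foliation is really used; everything else is formal transport along Morita equivalence. I would therefore structure the written proof as: (1) $\pi$ smooth and $\pi^*$ injective with image in basic forms; (2) $\cl F$-basic $=$ $\Hol$-basic via Proposition \ref{prop:5}; (3) pass to $\Hol_S$ via Corollary \ref{cor:2}; (4) conclude via Theorem \ref{thm:4} and Lemma \ref{lem:2}.
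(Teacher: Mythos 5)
Your proposal is correct and follows essentially the same route as the paper's proof: injectivity via Lemma \ref{lem:1} and image in basic forms via Proposition \ref{prop:4}(a), identification of $\cl F$-basic with $\Hol$-basic forms via source-connectedness and Proposition \ref{prop:5}, reduction to the \'etale groupoid $\Hol_S$ on a complete transversal via Morita equivalence and Corollary \ref{cor:2}, and conclusion via Theorem \ref{thm:4} and Lemma \ref{lem:2}. Your additional bookkeeping remarks about identifying the quotient diffeologies are sensible but the paper treats them as immediate.
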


  \begin{proof}
     The pullback $\pi^*$ is injective by Lemma \ref{lem:1}, and maps into basic forms by Proposition \ref{prop:4} (a). It remains is to show $\pi^*$ is surjective.

The foliation $\cl F$ is induced by its holonomy groupoid $\Hol \rra M$. As $\Hol$ is source-connected, by Proposition \ref{prop:5} the $\cl F$-basic and $\Hol$-basic forms coincide. Since $M/\Hol = M/\cl F$, it therefore suffices to show that $\pi^*$ is onto $\Hol$-basic forms.

  Fix a complete transversal $S$ to $\cl F$. The restriction $\Hol_S\rra S$ of $\Hol$ to $S$ is Morita equivalent to $\Hol$ (see Section \ref{sec:class-lie-group}). Therefore by Corollary \ref{cor:2}, to show $\pi^*$ surjects onto $\Hol$-basic forms, we may instead show the pullback by $\pi_S:S \to S/\Hol_S$ is onto $\Hol_S$-basic forms. The groupoid $\Hol_S \rra S$ is \'{e}tale, and its associated pseudogroup is countably generated by Theorem \ref{thm:4}. We apply Lemma \ref{lem:2} to complete the proof.
\end{proof}

\begin{corollary}
  \label{cor:3}
  If $\cl G \rra M$ is a Lie groupoid Morita equivalent to an \'{e}tale groupoid,\footnote{such groupoids are often called \emph{foliation groupoids}.} the pullback by $\pi:M \to M/\cl G$ is an isomorphism onto $\cl G$-basic forms.
\end{corollary}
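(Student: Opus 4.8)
The plan is to reduce the statement to the \'{e}tale case already handled inside the proof of Theorem \ref{thm:5}, using Morita invariance; the only genuinely new point is that the countable-generation hypothesis of Lemma \ref{lem:2} comes for free here.

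Most of the statement needs no work: $\pi:M\to M/\cl G$ is diffeologically smooth as a quotient-diffeology map, $\pi^*:\cplx{}{M/\cl G}\to\cplx{}{M}$ is injective by Lemma \ref{lem:1}, and by Proposition \ref{prop:4}(b) its image lies in $\cplx{b}{M,\cl G}$. So everything reduces to showing $\pi^*$ surjects onto the $\cl G$-basic forms.

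Now fix an \'{e}tale Lie groupoid $\cl H\rra H_0$ Morita equivalent to $\cl G$, witnessed by an invertible bibundle $P:\cl G\to\cl H$ with anchors $a_L:P\to M$ and $a_R:P\to H_0$. By Corollary \ref{cor:2}, $\pi_{\cl G}^*$ is onto the $\cl G$-basic forms if and only if $\pi_{\cl H}^*$ is onto the $\cl H$-basic forms, so it suffices to treat $\cl H$; and by Lemma \ref{lem:2} it suffices to show the pseudogroup $\Psi(\cl H)$ is countably generated. I would argue this as follows. Because $a_L$ is a principal right $\cl H$-bundle (Definition \ref{def:16}) and $\cl H$ is \'{e}tale, each fibre of $a_L$ is identified via $h\mapsto p\cdot h$ with a target fibre of $\cl H$, hence is discrete; since $P$ is a second-countable manifold these fibres are countable, and $H_0=a_R(P)$ --- the image of a surjective submersion out of a second-countable manifold --- is second-countable. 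As $s:\cl H\to H_0$ is \'{e}tale with countable fibres over a second-countable base, the arrow manifold $\cl H$ is second-countable too; covering it by countably many open sets on which both $s$ and $t$ restrict to open embeddings then produces countably many bisections $\sigma$ of $\cl H$ for which the maps $t\circ\sigma$ generate $\Psi(\cl H)$. Applying Lemma \ref{lem:2} to $\cl H$ and then Corollary \ref{cor:2} in reverse completes the proof.

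The main obstacle is precisely this last step --- producing a countable generating set for $\Psi(\cl H)$ --- since it is here that the proof of Theorem \ref{thm:5} leaned on the particular \'{e}tale groupoid $\Hol_S$ and on Theorem \ref{thm:4}; replacing that input means extracting countable generation from the Morita equivalence itself, which is the second-countability bookkeeping above. (Alternatively one could try to avoid Morita invariance by noting that a foliation groupoid has injective anchor, so $\rho(\fk g)=\Delta_{\cl F_{\cl G}}$ has locally constant rank and $\cl F_{\cl G}$ is a \emph{regular} foliation, and then invoke Theorem \ref{thm:5} for $\cl F_{\cl G}$; but one would still have to reconcile $\cl G$-basic forms with $\cl F_{\cl G}$-basic forms and $M/\cl G$ with $M/\cl F_{\cl G}$ through Proposition \ref{prop:5}, and the case of disconnected source fibres makes that route no easier.)
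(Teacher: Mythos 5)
Your route is genuinely different from the paper's, and the difference matters. The paper never applies Lemma \ref{lem:2} to the given \'{e}tale model: it observes that Morita equivalence to an \'{e}tale groupoid forces the isotropy groups of $\cl G$ to be discrete (Proposition 5.20 of \cite{MM2003}), so the induced foliation $\cl F_{\cl G}$ is \emph{regular}; Theorem \ref{thm:5} then gives surjectivity onto $\cl F_{\cl G}$-basic forms, and Proposition \ref{prop:5} together with source-connectedness identifies these with the $\cl G$-basic forms. That is precisely the ``alternative'' you sketch and dismiss in your closing parenthesis --- it is the route that works, because Theorem \ref{thm:5} only ever invokes Lemma \ref{lem:2} for the specific \'{e}tale groupoid $\Hol_S$, whose pseudogroup is countably generated by Theorem \ref{thm:4}.

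The gap in your main argument sits exactly where you feared: countable generation of $\Psi(\cl H)$ is not free. You derive it from ``$P$ is a second-countable manifold,'' but neither Definition \ref{def:17} nor the paper's conventions for arrow spaces impose second-countability, and the implication actually runs the other way: the fibres of $a_L$ are torsors over target fibres $t^{-1}(y)$ of $\cl H$, which for an \'{e}tale groupoid are discrete but may be \emph{uncountable}, in which case $P$ is forced to be non-second-countable. This is not a removable technicality: the germ groupoid of the pseudogroup of all local diffeomorphisms of $\RR$ is an \'{e}tale Lie groupoid over a second-countable base (hence trivially Morita equivalent to an \'{e}tale groupoid), yet its pseudogroup has uncountably many distinct germs at a single point, whereas by Lemma \ref{lem:5} a countably generated pseudogroup has only countably many germs at each point. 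So $\Psi(\cl H)$ need not be countably generated and Lemma \ref{lem:2} cannot be applied to an arbitrary \'{e}tale model. (Your final step --- producing countably many generating bisections from a countable basis of opens on which $s$ and $t$ are embeddings --- is correct \emph{once} $\cl H$ is known to be second-countable; the unproved point is that second-countability.) If you are willing to add second-countability of the arrow space of $\cl G$ as a standing hypothesis, your bookkeeping can be pushed through and yields a proof independent of the regular-foliation machinery; the paper's argument avoids the issue altogether by routing through Theorem \ref{thm:5}.
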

\begin{proof}
  Because $\cl G$ is Morita equivalent to an \'{e}tale groupoid, its associated isotropy groups are discrete (Proposition 5.20 in \cite{MM2003}). Therefore the singular foliation $\cl F$ associated to $\cl G$ is regular, and Theorem \ref{thm:5} every $\cl F$-basic form is a pullback from the quotient. But the $\cl F$-basic forms are exactly the $\cl G$-basic forms by source-connectedness and Proposition \ref{prop:5}. So the pullback must be onto $\cl G$-basic forms as well.
\end{proof}

\subsection{Singular foliations decomposed by dimension}
\label{sec:sing-foli-strat}

We now prove statement \ref{item:2} from the Introduction. We begin by establishing some terminology.

\begin{definition}
  \label{def:21}
  A singular foliation $(M, \cl F)$ is \emph{decomposed by dimension} if the sets $M_{=k}$, consisting of points in leaves of dimension $k$, are diffeological submanifolds of $M$, perhaps with components of varying dimension.
\end{definition}

Take an arbitrary singular foliation $(M, \cl F)$ with associated distribution $\Delta$. For $* \in \{=,\geq,>,<,\leq,\neq\}$, set
\begin{equation*}
  \cl F_{*k} := \{L \in \cl F \mid \dim L * k\}.
\end{equation*}

\begin{lemma}
  \label{lem:3}
  If $M_{*k}$ is a diffeological submanifold of $M$, then $(M_{*k}, \cl F_{*k})$ is a singular foliation.
\end{lemma}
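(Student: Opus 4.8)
The plan is to verify, for $(M_{*k},\cl F_{*k})$, the three requirements in Definition~\ref{def:12}: that $M_{*k}$ is a classical manifold, that the members of $\cl F_{*k}$ are connected weakly-embedded submanifolds of it, and that the associated rough singular distribution $\Delta_{\cl F_{*k}}$ is smooth. A preliminary remark makes the statement sensible: since $\Delta_x=T_xL_x$ for the leaf $L_x\ni x$, the function $x\mapsto\dim\Delta_x$ is constant on each leaf (equal to $\dim L_x$), so $M_{*k}$ is precisely the union of the leaves $L$ with $\dim L*k$, and hence $\cl F_{*k}$ is a partition of $M_{*k}$. That $M_{*k}$ is a classical manifold is immediate from the hypothesis: a diffeological submanifold is, by definition, a diffeological manifold in its subset diffeology, hence a classical smooth manifold for its D-topology by the discussion following Definition~\ref{def:4}; and since the inclusion $M_{*k}\hookrightarrow M$ is diffeologically smooth, hence D-continuous, this D-topology refines the subspace topology, so $M_{*k}$ is Hausdorff. (Second-countability can genuinely fail for $M_{*k}$ — for instance $M_{=0}$ may be an uncountable discrete set — but it is invoked in Definition~\ref{def:12} only through the leaves, which are unaffected; I would remark on this.)

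Next I would show every $L\in\cl F_{*k}$ is a connected weakly-embedded submanifold of $M_{*k}$. Connectedness is inherited from $\cl F$. Because the subset diffeology is transitive, the subset diffeology $L$ inherits from $M_{*k}$ coincides with the one it inherits from $M$, which is a diffeological manifold structure since $L$ is weakly-embedded in $M$; thus $L$ is a diffeological submanifold of $M_{*k}$. The inclusion $L\hookrightarrow M_{*k}$ is diffeologically smooth, hence a smooth map of classical manifolds, and its differential at each point is injective because the differential of the composite $L\hookrightarrow M_{*k}\hookrightarrow M$ is injective (that composite being an immersion). So $L\hookrightarrow M_{*k}$ is an immersion and $L$ is weakly-embedded in $M_{*k}$; in particular $\Delta':=\Delta_{\cl F_{*k}}$ is a well-defined rough singular distribution on $M_{*k}$.

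The substantive step is smoothness of $\Delta'$. Fix $x\in M_{*k}$ and $v\in\Delta'_x=T_xL_x\subseteq T_xM_{*k}$; I must produce a section of $\Delta'$ through $v$. Let $w\in T_xM$ be the image of $v$ under the differential of $M_{*k}\hookrightarrow M$; this $w$ lies in $T_xL_x\subseteq T_xM$, so by smoothness of $\Delta$ on $M$ I may choose $X\in\Gamma_{\loc}(\Delta)$ near $x$ with $X_x=w$. The crucial observation is that the local flow $\Phi$ of $X$ preserves every leaf of $\cl F$: taking $\Gamma_{\loc}(\Delta)$ as a spanning set in Theorem~\ref{thm:3}(c), the orbits of the pseudogroup generated by the flows of its members are exactly the leaves, so $\Phi^t(y)\in L_y$ for every $y$, and in particular $\Phi$ preserves $M_{*k}$ setwise. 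Consequently $\Phi$ restricts to a diffeologically smooth local flow on $M_{*k}$ — smoothness of the restriction follows directly from the definition of the subset diffeology — which, $M_{*k}$ being a classical manifold, is a classical local flow whose generator $Y$ is a smooth vector field defined near $x$ on $M_{*k}$. For any $y$, the curve $t\mapsto\Phi^t(y)$ lies in $L_y$, which is weakly-embedded in $M_{*k}$ by the previous step, so this curve is smooth into $L_y$ and its velocity $Y_y$ lies in $T_yL_y=\Delta'_y$; thus $Y\in\Gamma_{\loc}(\Delta')$. Finally $Y_x=v$: regarded as a curve in $L_x$, $t\mapsto\Phi^t(x)$ has velocity equal to the unique vector of $T_xL_x$ mapping to $X_x=w$ under the (injective) differential of $L_x\hookrightarrow M$, and pushing this vector forward into $T_xM_{*k}$ returns $v$. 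This gives the required section, so $\Delta'$ is smooth and $(M_{*k},\cl F_{*k})$ is a singular foliation.

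The main obstacle — and the reason the argument is routed through flows and through the individual leaves rather than simply ``restricting $X$ to $M_{*k}$'' — is that $M_{*k}\hookrightarrow M$ need not be an immersion: a diffeological submanifold of a manifold need not be weakly-embedded (Proposition~\ref{prop:1}), so the map $T_xM_{*k}\to T_xM$ may fail to be injective and a vector field tangent to the leaves has no evident restriction to $M_{*k}$. Exploiting instead that the flow $\Phi$ genuinely permutes the set $M_{*k}$, together with the fact that each leaf is weakly-embedded both in $M$ and (by the middle step) in $M_{*k}$, sidesteps this difficulty; the only further care required is the harmless second-countability point noted above.
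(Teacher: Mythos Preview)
Your proof is correct and rests on the same key observation as the paper's: the time-$t$ maps of vector fields in $\Gamma_{\loc}(\Delta)$ (more generally, the arrows spanning $\cl F$) preserve the leaves of $\cl F$, hence preserve $M_{*k}$ setwise, so they restrict to $M_{*k}$. The difference lies in how this observation is packaged. The paper restricts a collection of arrows $\cl A$ to a collection $\cl A'$ on $M_{*k}$ and then invokes Stefan's Theorem~\ref{thm:2} as a black box: the orbits of $\Psi\cl A'$ automatically form a singular foliation, and these orbits are exactly the elements of $\cl F_{*k}$. You instead verify Definition~\ref{def:12} by hand --- checking that each $L\in\cl F_{*k}$ is weakly-embedded in $M_{*k}$ via transitivity of the subset diffeology, and then building a local section of $\Delta'$ through any prescribed vector by restricting a flow and differentiating. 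Your route is more elementary and makes explicit several points the paper leaves implicit (notably why the restricted maps are smooth on $M_{*k}$, and the second-countability caveat); the paper's route is shorter because Stefan's theorem absorbs the weak-embedding and smoothness checks simultaneously.
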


\begin{proof}

By the Stefan-Sussmann Theorem \ref{thm:3}, we may take a collection of arrows $\cl A$ such that the orbits of $\Psi \cl A$ are the leaves of $\cl F$. Let $\cl A'$ consist of the arrows in $\cl A$ restricted to $M_{*k}$. This is a collection of arrows, and so by Stefan's Theorem \ref{thm:2}, the orbits of $\Psi \cl A'$ form a singular foliation of $M_{*k}$. But these orbits are exactly the elements of $\cl F_{*k}$, hence $\cl F_{*k}$ is indeed a singular foliation.
\end{proof}

\begin{lemma}
  \label{lem:4}
  Suppose $M_{*k}$ is a diffeological submanifold of $M$, and $\alpha \in \cplx{b}{M, \cl F}$. Then $\alpha' := \alpha|_{M_{*k}}$ is $\cl F_{*k}$-basic.
\end{lemma}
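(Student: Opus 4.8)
The plan is to verify the two conditions of Definition \ref{def:19} for $\alpha' := \iota^*\alpha$ \emph{pointwise}, where $\iota\colon M_{*k} \hookrightarrow M$ is the inclusion. By Lemma \ref{lem:3}, $(M_{*k}, \cl F_{*k})$ is a singular foliation; write $\Delta'$ for its associated distribution. The crucial input is a comparison of leafwise directions: for every $y \in M_{*k}$ one needs $d\iota_y(\Delta'_y) \subseteq \Delta_y$. To get this, note that a leaf $L'$ of $\cl F_{*k}$ is, as a subset of $M$, a leaf $L$ of $\cl F$ (namely one with $\dim L * k$), and $\Delta'_y = T_yL'$, $\Delta_y = T_yL$ by Definition \ref{def:12}. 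From the proof of Lemma \ref{lem:3} (Stefan's Theorem \ref{thm:2}), $L'$ is weakly-embedded in $M_{*k}$, and by Definition \ref{def:6} $L$ is weakly-embedded in $M$. Since $M_{*k}$ carries the subset diffeology, the composite $L' \hookrightarrow M_{*k} \hookrightarrow M$ is diffeologically smooth with image in $L$, hence — because $L$ is weakly-embedded in $M$, so a smooth map into $M$ with image in $L$ is smooth into $L$ — it factors through a smooth map $f\colon L' \to L$ which is the identity on underlying sets. Differentiating $\iota|_{L'} = (\iota|_L)\circ f$ at $y$ yields $d\iota_y(T_yL') \subseteq d(\iota|_L)_y(T_yL) = \Delta_y$, as wanted.

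With this in hand the verification is routine. Recall that an $\cl F$-basic form $\beta$ is horizontal, which — since $\Delta$ is smooth and $\iota_X$ at a point depends only on the value of $X$ there — amounts to $\iota_u\beta_x = 0$ for all $x \in M$ and $u \in \Delta_x$. Let $X'$ be a section of $\Delta'$ defined on an open $V \subseteq M_{*k}$, and let $y \in V$. Then $X'_y \in \Delta'_y$, so $d\iota_y(X'_y) \in \Delta_y$, and for all $w_2, \dots, w_k \in T_yM_{*k}$,
\begin{equation*}
  (\iota_{X'}\alpha')_y(w_2, \dots, w_k) = \alpha_y\bigl(d\iota_y(X'_y), d\iota_y(w_2), \dots, d\iota_y(w_k)\bigr) = 0
\end{equation*}
by horizontality of $\alpha$; hence $\iota_{X'}\alpha' = 0$, i.e.\ $\alpha'$ is horizontal. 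For invariance, use $\cl L_{X'}\alpha' = d\,\iota_{X'}\alpha' + \iota_{X'}d\alpha' = \iota_{X'}d\alpha'$ together with $d\alpha' = \iota^*(d\alpha)$ and the fact that $d\alpha$ is again $\cl F$-basic (the differential of a basic form is basic). Applying the displayed computation with $d\alpha$ in place of $\alpha$ gives $\iota_{X'}d\alpha' = 0$, so $\cl L_{X'}\alpha' = 0$. As this holds for every section $X'$ of $\Delta'$, the form $\alpha'$ is $\cl F_{*k}$-basic.

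The only delicate point — and the step I would spend the most care on — is the tangent-space comparison $d\iota_y(\Delta'_y) \subseteq \Delta_y$, since $M_{*k}$ may be a badly-behaved diffeological submanifold (components of varying dimension, and $\iota$ possibly not an immersion, as Proposition \ref{prop:1} warns is possible). The argument above avoids immersivity of $\iota$ entirely, using only the plot description of the subset diffeology and the universal property of weakly-embedded submanifolds. (Alternatively one could unwind the arrow picture from Lemma \ref{lem:3}: a leaf of $\cl F_{*k}$ is an orbit of $\Psi\cl A'$, where $\cl A'$ consists of the restrictions to $M_{*k}$ of arrows generating $\cl F$, so by Stefan's theorem $T_yL' = (\overline{\Delta_{\cl A'}})_y$, and naturality of the pushforward maps under $\iota$ carries this into $(\overline{\Delta_{\cl A}})_y = \Delta_y$; this works too but requires more pseudogroup bookkeeping.)
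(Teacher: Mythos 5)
Your proof is correct, and it diverges from the paper's in one substantive way. For horizontality you do essentially what the paper does: factor the leaf inclusion as $L \hookrightarrow M_{*k} \hookrightarrow M$ and use that $L$ is weakly-embedded in $M$ to land $d\iota_y(\Delta'_y)$ inside $\Delta_y$, so that horizontality of $\alpha$ kills $\iota_{X'}\alpha'$ pointwise. (You are in fact a bit more careful than the paper here: the paper simply asserts that both $L \hookrightarrow M_{*k}$ and $L \hookrightarrow M$ are immersions because $L$ is weakly-embedded, whereas you justify the comparison of the two leaf structures by the universal property of the subset diffeology; the extra care is welcome given that $\iota$ itself need not be an immersion.) For invariance, however, the paper takes a different route: it recalls the arrow collections $\cl A$ and $\cl A'$ from Lemma \ref{lem:3}, notes that $\cl F$-invariance of $\alpha$ gives $\Psi\cl A$-invariance, and observes that this restricts to $\Psi\cl A'$-invariance of $\alpha'$, hence $\cl F_{*k}$-invariance. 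You instead use Cartan's formula to reduce invariance to horizontality of $d\alpha'$, and then rerun your pointwise argument with $d\alpha$ (which is again basic, hence horizontal) in place of $\alpha$. Your version buys uniformity --- both conditions are checked by the same pointwise tangent-space computation, with no pseudogroup bookkeeping --- and it leans only on facts the paper has already recorded in Section \ref{sec:basic-de-rham} (that basicness is equivalent to $\iota_X\alpha = \iota_X d\alpha = 0$, and that $d$ preserves basic forms). The paper's version buys a proof of invariance that does not pass through horizontality at all and reuses the arrow machinery already set up for Lemma \ref{lem:3}. Both are sound.
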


\begin{proof}
  Take $\cl A$ and $\cl A'$ as in the previous lemma. Then $\alpha$ is $\cl F$-invariant, hence $\Psi \cl A$-invariant. This means $\alpha'$ is $\Psi \cl A'$-invariant, hence $\cl F_{*k}$-invariant. As for horizontal, we must take a slightly pedantic approach. Denote the inclusions
  \begin{equation*}
  \begin{tikzcd}
    L \ar[r, hook, "\iota'"] \ar[rr, hook, bend left, "\iota''"] & M_{*k} \ar[r, hook, "\iota"] & M.
  \end{tikzcd}  
\end{equation*}

Both $\iota'$ and $\iota''$ are smooth immersions, since $L$ is weakly-embedded, but $\iota$ is merely smooth. Suppose $v \in T_xL$. We want to show $\iota_{\iota'_*v}\alpha' = 0$. Compute
\begin{equation*}
  \iota_{\iota'_*v}\alpha' = \alpha'(\iota_*'v, \cdot) = \alpha(\iota_* \iota'_* v, \iota_*\cdot) = \alpha(\iota''_*v, \iota_*\cdot).
\end{equation*}
But the right side is 0, because $\alpha$ is $\cl F$-horizontal.
\end{proof}

\begin{theorem}
  \label{thm:6}
  Suppose the singular foliation $(M, \cl F)$ is decomposed by dimension. Equip $M$ and $M/\cl F$ with the manifold and quotient diffeology, respectively. The quotient map $\pi:M \to M/\cl F$ is diffeologically smooth, and pulling back by the quotient is an isomorphism from diffeological forms on $M/\cl F$ to $\cl F$-basic forms on $M$. In other words, $\pi^*:\cplx{}{M/\cl F} \to \cplx{b}{M, \cl F}$ is an isomorphism.
\end{theorem}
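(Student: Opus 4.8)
The strategy is to reduce to the regular case (Theorem~\ref{thm:5}) along the strata $M_{=k}$. Injectivity of $\pi^*$ is Lemma~\ref{lem:1}, and Proposition~\ref{prop:4}(a) puts its image inside $\cplx{b}{M,\cl F}$, so the task is surjectivity; by Proposition~\ref{prop:3} this amounts to showing that for an $\cl F$-basic form $\alpha$ and any two plots $P,Q\colon U\to M$ with $\pi\circ P=\pi\circ Q$, we have $\alpha(P)=\alpha(Q)$.

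First I would decompose $U$ by leaf dimension. Since $\pi\circ P=\pi\circ Q$ forces $P(r)$ and $Q(r)$ to share a leaf, the subsets $P^{-1}(M_{=k})$ and $Q^{-1}(M_{=k})$ of $U$ coincide; write $W_k$ for the interior of this common set in $U$. On $W_k$, the restrictions $P|_{W_k}$ and $Q|_{W_k}$ are plots of $M_{=k}$ (with its subset diffeology), they land in a common leaf of $\cl F_{=k}$, hence satisfy $\pi_{=k}\circ P|_{W_k}=\pi_{=k}\circ Q|_{W_k}$ where $\pi_{=k}\colon M_{=k}\to M_{=k}/\cl F_{=k}$ is the quotient, and $\alpha|_{M_{=k}}$ is $\cl F_{=k}$-basic by Lemma~\ref{lem:4}. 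By Lemma~\ref{lem:3}, $(M_{=k},\cl F_{=k})$ is a singular foliation; all its leaves have dimension $k$, so its smooth distribution has constant rank $k$, is therefore an integrable subbundle, and $\cl F_{=k}$ is a \emph{regular} foliation. Applying Theorem~\ref{thm:5} to it and then Proposition~\ref{prop:3} for $\cl F_{=k}$ gives $\alpha|_{M_{=k}}(P|_{W_k})=\alpha|_{M_{=k}}(Q|_{W_k})$, that is $\alpha(P)=\alpha(Q)$ on $W_k$.

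To conclude, I would show $\bigcup_k W_k$ is dense in $U$. The dimension function is lower semicontinuous, so $M_{=k}=M_{\geq k}\smallsetminus M_{\geq k+1}$ is locally closed in $M$; hence $P^{-1}(M_{=k})$ is locally closed in $U$, so its boundary $\partial_U P^{-1}(M_{=k})$ is closed and nowhere dense. Since $k$ runs over the finitely many values $0,\dots,\dim M$ and the $P^{-1}(M_{=k})$ partition $U$, the set $U\smallsetminus\bigcup_k W_k$ lies in the finite union $\bigcup_k\partial_U P^{-1}(M_{=k})$, which is closed and nowhere dense. Thus $P^*\alpha$ and $Q^*\alpha$ are smooth forms on $U$ agreeing on a dense open set, hence everywhere by continuity.

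The main obstacle I anticipate is the passage to the strata: one must make sure Theorem~\ref{thm:5} genuinely applies to $(M_{=k},\cl F_{=k})$. This requires checking that $M_{=k}$, carrying its subset diffeology, is a Hausdorff, second-countable classical manifold whose diffeology agrees with the diffeology of that classical structure (Hausdorffness is immediate because the D-topology refines the subspace topology), together with the usual care needed when working with diffeological rather than embedded submanifolds; Lemma~\ref{lem:3} already supplies the remaining foliation-theoretic facts. Once this bookkeeping is settled, the rest is routine.
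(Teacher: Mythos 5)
Your proposal is correct and rests on the same pillars as the paper's proof: injectivity from Lemma \ref{lem:1}, reduction of surjectivity to Proposition \ref{prop:3}, restriction of a basic form to the strata $M_{=k}$ via Lemmas \ref{lem:3} and \ref{lem:4}, the observation that $(M_{=k},\cl F_{=k})$ is regular so Theorem \ref{thm:5} applies there, and a density-plus-continuity argument to propagate $\alpha(P)=\alpha(Q)$ from a dense open subset to all of $U$. Where you differ is in the organization. The paper runs a downward induction on $k$, at each stage splitting $U=A\sqcup B$ with $A=P^{-1}(M_{\geq k+1})$ open and writing $U=\ol A\cup\text{int}(B)$; you instead treat all strata simultaneously, using that each $M_{=k}$ is locally closed so that $\bigcup_k\text{int}\bigl(P^{-1}(M_{=k})\bigr)$ is open and dense. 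These are equivalent in substance, and your version avoids the induction. A second difference: on the stratum piece the paper first invokes the Stefan--Sussmann theorem to produce a leaf-preserving local diffeomorphism $\xi$ with $\xi(P(r))=Q(r)$, replaces $P$ by $\xi\circ P$ so that both plots visibly land in a single connected component $M_{=k}^\circ$, applies Theorem \ref{thm:5} to that component, and then uses $\cl F$-invariance to remove $\xi$. You skip this; that is legitimate, since $P(r)$ and $Q(r)$ lie on a common (connected) leaf and hence in the same D-connected component of $M_{=k}$, but you should either make that remark and apply Theorem \ref{thm:5} component by component, or note explicitly that the theorem applies to a manifold whose components have varying dimension. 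Finally, the point you flag at the end is real and is the one place where neither argument is fully spelled out: Theorem \ref{thm:5} (through Theorem \ref{thm:4}) is proved for Hausdorff, second-countable manifolds, and while Hausdorffness of $M_{=k}$ is immediate as you say, second countability of the D-topology of a diffeological submanifold deserves a sentence; the paper applies Theorem \ref{thm:5} to $M_{=k}^\circ$ without addressing it either, so this is a shared, not a new, gap.
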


\begin{proof}
  By Proposition \ref{prop:4}, $\pi^*$ maps into $\cl F$-basic forms, and by Lemma \ref{lem:1} $\pi^*$ is injective. It remains to show $\pi^*$ is surjective. Let $k_{\text{max}}$ denote the highest dimension of the leaves of $\cl F$.  Equip $M_{*k}$ with the singular foliation $\cl F_{*k}$. Fix $\alpha \in \cplx{b}{M, \cl F}$. Consider the statement
  \begin{equation}\tag*{$S(k)$}\label{eq:5}
      \alpha|_{M_{\geq k}} \text{ is the pullback of a form on the quotient } M_{\geq k}/\cl F_{\geq k}.
      \end{equation}
      If $S(k_{\text{max}})$ holds, and if $S(k+1) \implies S(k)$, then $S(0)$ holds, which is what we want to prove. Now, $S(k_{\text{max}})$ is equivalent to: there is a form $\beta$ on $M_{= k_{\text{max}}}/\cl F_{=k_{\text{max}}}$ such that $\pi^*\beta = \alpha|_{M_{=k_{\text{max}}}}$. But $(M_{=k_{\text{max}}}, \cl F_{=k_{\text{max}}})$ is a regular foliation, and $\alpha|_{M_{=k_{\text{max}}}}$ is $\cl F_{=k_{\text{max}}}$-basic by Lemma \ref{lem:4}, so $S(k_{\text{max}})$ holds by Theorem \ref{thm:5}.
      
Now assume $S(k+1)$. We will use Proposition \ref{prop:3} to conclude $S(k)$. Let $P,Q:U \to M_{\geq k}$ be plots such that $\pi \circ P = \pi \circ Q$. Set
\begin{align*}
  A &:= P^{-1}(M_{\geq k+1}) \quad (=Q^{-1}(M_{\geq k+1})), \text{ which is open in }U \\
  B &:= P^{-1}(M_{=k}) \quad (=Q^{-1}(M_{=k})).
\end{align*}
Then $U = A \sqcup B$, and so $U = \ol A \cup \text{int}(B)$. We will show $\alpha(P) = \alpha(Q)$ first on $A$, and then on $\text{int}(B)$. By continuity, this yields $\alpha(P) = \alpha(Q)$ on $\ol A \cup \text{int}(B) = U$.
\begin{itemize}
\item \textbf{For $A$}: The plots $P$ and $Q$ restrict to maps $P',Q':A \to M_{\geq k+1}$, which are smooth maps between the $U$-open set $A$ and the $M$-open set $M_{\geq k+1}$. We have $\pi \circ P' = \pi \circ Q'$, and we are assuming $S(k+1)$. Therefore by Proposition \ref{prop:3},
  \begin{equation*}
    \alpha|_{M_{\geq k+1}}(P') = \alpha|_{M_{\geq k+1}}(Q'), \text{ which implies } \alpha(P)|_A = \alpha(Q)|_A.
  \end{equation*}
\item \smallskip\noindent\textbf{For }$\text{int}(B)$: We may assume $\text{int}(B)$ is non-empty. Fix $r \in \text{int}(B)$. By the Stefan-Sussmann Theorem (Remark \ref{rem:1}), we can take $M$-open sets $V$ about $P(r)$ and $V'$ about $Q(r)$, and a diffeomorphism $\xi:V \to V'$ respecting the leaves, such that $\xi(P(r)) = Q(r)$. Let $U'$ be an open connected neighbourhood of $r$ such that $U' \subseteq \text{int}(B) \cap P^{-1}(V)$. Define
  \begin{align*}
    P':U' &\to M_{= k}, \quad r' \mapsto \xi(P(r')) \\
    Q':U' &\to M_{= k}, \quad r' \mapsto Q(r').
  \end{align*}
These are continuous. As $U'$ is connected, its images under $P'$ and $Q'$ are connected, and in particular lie in a connected component of $M_{=k}$. Because $P'(r) = Q'(r)$, in fact both $P'$ and $Q'$ map into a single connected component $M_{=k}^\circ$ of $M_{=k}$. Denote the restrictions of $P'$ and $Q'$ to maps $U' \to M_{=k}^\circ$ again by $P'$ and $Q'$. We may view $P'$ and $Q'$ as smooth maps from the $U$-open set $U'$ to the manifold $M_{=k}^\circ$. Observe that
  \begin{equation*}
    \pi(P'(r')) = \pi(\xi(P(r'))) = \pi(P(r')) = \pi(Q'(r')),
  \end{equation*}
  so $\pi \circ P' = \pi \circ Q'$. Also, $\alpha|_{M_{=k}^\circ}$ is basic with respect to $\cl F_{=k}$ by Lemma \ref{lem:4}. Therefore by Theorem \ref{thm:5} $\alpha|_{M_{=k}^\circ}$ is the pullback of some form on $M_{=k}^\circ / \cl F_{=k}$. By Proposition \ref{prop:3}, we get
  \begin{equation*}
    \alpha|_{M_{=k}^\circ}(P') = \alpha|_{M_{=k}^\circ}(Q'), \text{ which implies } \alpha(\xi \circ P)|_{U'} = \alpha(Q)|_{U'}.
  \end{equation*}

  Because $\alpha$ is $\cl F$-basic, $\alpha(\xi \circ P) = \alpha(P)$. As $r$ was arbitrary, we can conclude that $\alpha(P)|_{\text{int}(B)} = \alpha(Q)|_{\text{int}(B)}$.
\end{itemize}

Therefore, for any two plots $P,Q$ of $M$ such that $\pi \circ P = \pi \circ Q$, we have proved $\alpha(P) = \alpha(Q)$. By Proposition \ref{prop:3}, $\alpha$ is the pullback of some diffeological form on $M/\cl F$.
\end{proof}

\begin{corollary}
  \label{cor:4}
  If $\cl G \rra M$ is a source-connected linearizable Lie groupoid (see Appendix \ref{sec:line-strat-dimens}), then $\pi^*:\cplx{}{M/\cl G} \to \cplx{b}{M, \cl G}$ is an isomorphism.
\end{corollary}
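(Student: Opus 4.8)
The plan is to reduce everything to Theorem \ref{thm:6} via two identifications: that linearizability forces the orbit foliation to be decomposed by dimension, and that source-connectedness makes $\cl G$-basic forms the same as $\cl F$-basic forms.

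First I would fix the singular foliation $\cl F := \cl F_{\cl G}$ of $M$ by the connected components of the orbits of $\cl G$, as in Example \ref{ex:1}. Since the orbits of $\cl G$ and of $\cl G^\circ$ coincide (Example \ref{ex:1}) and the leaves of $\cl F$ are exactly the $\cl G^\circ$-orbits, the quotient map $\pi:M\to M/\cl G$ and the leaf projection $M\to M/\cl F$ have the same fibers; hence they carry the same quotient diffeology, and $M/\cl G = M/\cl F$ as diffeological spaces with $\pi$ the common quotient map.

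Next I would invoke Proposition \ref{prop:7} of Appendix \ref{sec:line-strat-dimens}: if $\cl G$ is linearizable (Definition \ref{def:22}), then the sets $M_{=k}$ are (embedded, hence diffeological) submanifolds of $M$, i.e.\ $(M,\cl F)$ is decomposed by dimension in the sense of Definition \ref{def:21}. I expect this to be the main obstacle of the whole argument: it requires unwinding the local normal-form model of a linearizable groupoid near an orbit and checking that the loci of equidimensional orbits assemble into submanifolds. Here, however, it is exactly the content of Proposition \ref{prop:7}, so it can be cited directly.

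With $(M,\cl F)$ decomposed by dimension, Theorem \ref{thm:6} applies and gives that $\pi^*:\cplx{}{M/\cl F}\to\cplx{b}{M,\cl F}$ is an isomorphism. Finally, since $\cl G$ is source-connected we have $\cl G^\circ=\cl G$, so Proposition \ref{prop:5} yields $\cplx{b}{M,\cl F}=\cplx{b}{M,\cl G}$ (every $\cl G$-basic form is $\cl F$-basic, and every $\cl F$-basic form is $\cl G^\circ$-basic $=\cl G$-basic). Composing these identifications with the isomorphism from Theorem \ref{thm:6} shows that $\pi^*:\cplx{}{M/\cl G}\to\cplx{b}{M,\cl G}$ is an isomorphism, as claimed.
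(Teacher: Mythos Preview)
Your proposal is correct and follows essentially the same approach as the paper: invoke Proposition~\ref{prop:7} to get that linearizability implies $(M,\cl F_{\cl G})$ is decomposed by dimension, apply Theorem~\ref{thm:6}, and then use Proposition~\ref{prop:5} together with source-connectedness to identify $\cl F$-basic with $\cl G$-basic forms. Your version is in fact slightly more careful than the paper's in spelling out why $M/\cl G = M/\cl F$ as diffeological spaces.
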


\begin{proof}
  Let $\cl F_{\cl G}$ be the singular folaition consisting of the orbits of $\cl G$. By Proposition \ref{prop:5}, $\cl G$ linearizable implies $\cl F_{\cl G}$ is decomposed by dimension. Hence by Theorem \ref{thm:6}, $\pi$ is onto $\cl F_{\cl G}$-basic forms. By source-connectedness, and Proposition \ref{prop:5}, $\cplx{b}{M, \cl F_{\cl G}} = \cplx{b}{M, \cl G}$, which completes the proof.
\end{proof}

Here we discuss how this result fits into existing literature. Crainic and Struchiner \cite{CS2013} proved every proper Lie groupoid is linearizable. In this case, the fact $\pi^*$ is an isomorphism was also proved by Watts \cite{W2015}. Watts relied on properness to ensure compactness of the isotropy groups $G_x$, and then applied his and Karshon's \cite{KW2016} earlier result that $\pi^*$ is an isomorphism whenever $\cl G$ is the action groupoid of a Lie group action on $M$ with properly acting identity component. This earlier result also relied on compactness of the $G_x$ for a proper Lie group action.

To finish, we show that $\pi^*$ is an isomorphism for a much broader class of singular foliations than those decomposed by dimension.

\begin{theorem}
  \label{thm:7}
  Suppose the singular foliation $(M, \cl F)$ is such that the pullback by $\pi:M_{>0} \to M_{>0}/\cl F_{>0}$ is an isomorphism $\pi^*:\cplx{}{M_{>0}} \to \cplx{b}{M_{>0}, \cl F_{>0}}$. Then the pullback by the quotient $\pi:M \to M/\cl F$ is an isomorphism $\pi^*:\cplx{}{M} \to \cplx{b}{M, \cl F}$.
\end{theorem}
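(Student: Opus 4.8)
The plan is to run the argument of Theorem~\ref{thm:6} almost verbatim, but with the role of the top-dimensional stratum played by the open set $M_{>0}$ and with the remaining stratum $M_{=0}$ disposed of trivially. As in the preceding results, $\pi^*$ is injective by Lemma~\ref{lem:1} and has image inside the $\cl F$-basic forms by Proposition~\ref{prop:4}(a), so the only thing to prove is that $\pi^*$ is onto. I would fix $\alpha \in \cplx{b}{M, \cl F}$; by Proposition~\ref{prop:3} it is enough to show $\alpha(P) = \alpha(Q)$ for any pair of plots $P, Q : U \to M$ with $\pi \circ P = \pi \circ Q$.

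First I would record the relevant topology: since the dimension function is lower semi-continuous, $M_{>0} = M_{\geq 1}$ is open in $M$ and $M_{=0}$ is closed. For $r \in U$ the points $P(r)$ and $Q(r)$ lie in a common leaf, so $\dim L_{P(r)} = \dim L_{Q(r)}$ and hence $P(r) \in M_{>0} \iff Q(r) \in M_{>0}$. Thus
\begin{equation*}
  A := P^{-1}(M_{>0}) = Q^{-1}(M_{>0}), \qquad B := P^{-1}(M_{=0}) = Q^{-1}(M_{=0})
\end{equation*}
partition $U$ with $A$ open, and $U = \ol A \cup \text{int}(B)$ (any point outside $\ol A$ lies in the open subset $U \smallsetminus \ol A$ of $B$, hence in $\text{int}(B)$). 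Since $\alpha(P)$ and $\alpha(Q)$ are ordinary smooth $k$-forms on $U$, it suffices to check $\alpha(P) = \alpha(Q)$ on the dense open set $A \cup \text{int}(B)$, and for that I would treat $A$ and $\text{int}(B)$ separately.

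On $A$: by Lemma~\ref{lem:3} the open set $M_{>0}$ carries the singular foliation $\cl F_{>0}$, and by Lemma~\ref{lem:4} the restriction $\alpha|_{M_{>0}}$ is $\cl F_{>0}$-basic; the hypothesis then gives that $\alpha|_{M_{>0}}$ is pulled back from $M_{>0}/\cl F_{>0}$. The restrictions $P|_A, Q|_A$ are plots of $M_{>0}$ with $\pi \circ P|_A = \pi \circ Q|_A$, so Proposition~\ref{prop:3} yields $\alpha(P)|_A = \alpha(Q)|_A$. On $\text{int}(B)$: here $P$ and $Q$ take values in $M_{=0}$, whose leaves are single points; since $P(r)$ and $Q(r)$ always share a leaf, this forces $P = Q$ on $\text{int}(B)$, whence $\alpha(P) = \alpha(Q)$ there trivially. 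Combining the two cases and using density, $\alpha(P) = \alpha(Q)$ on all of $U$, and Proposition~\ref{prop:3} concludes the argument.

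I do not expect a genuine obstacle here: the whole point of the statement is that the subtle transverse geometry has been quarantined inside $M_{>0}$, where the hypothesis is available by assumption, while $M_{=0}$ contributes nothing because its leaves are singletons. The only mildly delicate points are the set-theoretic identity $U = \ol A \cup \text{int}(B)$ and the observation that a smooth form vanishing on the open set $A$ (resp.\ $\text{int}(B)$) vanishes on its closure — both routine and already used in the proof of Theorem~\ref{thm:6}.
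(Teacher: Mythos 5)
Your proposal is correct and follows essentially the same route as the paper: the same decomposition $U = \ol A \cup \operatorname{int}(B)$ with $A = P^{-1}(M_{>0})$, the hypothesis applied on $A$, the observation that $P = Q$ on $\operatorname{int}(B)$ because the $0$-dimensional leaves are points, and Proposition \ref{prop:3} to conclude. The only cosmetic difference is that you route the claim that $\alpha|_{M_{>0}}$ is $\cl F_{>0}$-basic through Lemmas \ref{lem:3} and \ref{lem:4}, where the paper simply notes that $M_{>0}$ is open so $(M_{>0},\cl F_{>0})$ is a singular foliation of a manifold.
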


\begin{proof}
  This proof uses the same ideas as in the previous theorem. Letting $\alpha \in \cplx{b}{M, \cl F}$, all we need to show is that $\alpha$ comes from the quotient. We use Proposition \ref{prop:3}. Let $P, Q:U \to M$ be plots such that $\pi \circ P = \pi \circ Q$. Set
  \begin{align*}
  A &:= P^{-1}(M_{>0}) \quad (=Q^{-1}(M_{>0})), \text{ which is open in }U \\
  B &:= P^{-1}(M_{=0}) \quad (=Q^{-1}(M_{=0})).
  \end{align*}
  As before, to show $\alpha(P) = \alpha(Q)$, it suffices to show equality on $A$ and on $\text{int}(B)$. For $A$, this is a direct result of the assumption and the fact $M_{>0}$ is open, so that $(M_{>0}, \cl F_{>0})$ is a singular foliation of a manifold. For $r \in \text{int}(B)$, note that $\pi \circ P(r) = \pi \circ Q(r)$ reduces to $P(r) = Q(r)$, because the $0$-leaves are just points. Therefore $P = Q$ on $\text{int}(B)$, and $\alpha(P) = \alpha(Q)$ on $\text{int}(B)$ follows immediately.
\end{proof}

Some consequences of the previous theorem are that $\pi^*$ is an isomorphism when
\begin{itemize}
\item $\Delta_{\cl F}$ is spanned by a single vector field. For instance, every singular foliation of $\RR$.
\item the singular leaves of $(M, \cl F)$ are all points (this implies $M_{=k}$ is open for $k \geq 1$). Examples of this type include the coadjoint action of $\text{SL}(2, \RR)$, and more generally any singular foliation induced by any Poisson structure on a 2 or 3-dimensional manifold.
\end{itemize}

Both examples can come from Lie groupoids which are not linearizable, and perhaps some are not induced by any Lie algebroid at all.
\appendix
\section{Linearization of Lie groupoids and decomposition by dimension}
\label{sec:line-strat-dimens}

Here we review the linearization of a Lie groupoid about an orbit. Our sources are \cite{CS2013} and \cite{F2015}. Refer to Section \ref{sec:lie-algebr-group} for the relevant facts about Lie groupoids. Fix a Lie groupoid $\cl G \rra M$, and orbit $\cl O$ through $x \in M$. We form the restricted groupoid $\cl G_{\cl O} \rra \cl O$, whose arrows are those arrows in $\cl G$ which begin (and end) in $\cl O$, and all the structure maps are induced from $\cl G \rra M$. This is a Lie groupoid.

Let $\nu(\cl O)$ denote the normal bundle over $\cl O$, and similarly for $\nu(\cl G_{\cl O})$.   We can form the groupoid $\nu(\cl G_{\cl O}) \rra \nu(\cl O)$ using the short exact sequence of groupoids:
  \begin{equation*}
    \begin{tikzcd}
      1 \ar[r] & T\cl G_{\cl O} \ar[d, shift left] \ar[d, shift right] \ar[r] & T\cl G \ar[d, shift left] \ar[d, shift right] \ar[r] & \nu(\cl G_{\cl O}) \ar[d, shift left] \ar[d, shift right] \ar[r] & 1 \\
      0 \ar[r] & T\cl O \ar[r] & TM \ar[r] & \nu(\cl O) \ar[r] & 0
    \end{tikzcd}
  \end{equation*}

  \begin{definition}
  \label{def:22}
  A Lie groupoid is \emph{linearizable} at an orbit $\cl O$ (through $x$) if there is an open neighbourhood $U$ of $\cl O \subseteq M$ and an open neighbourhood $V$ of $\cl O \subseteq \nu(\cl O)$ (viewing $\cl O$ as the image of the zero-section), and an isomorphism of the Lie groupoids $\cl G|_U\rra U$ and $\nu(\cl G_{\cl O}) |_V \rra V$, which is the identity on $\cl G_{\cl O} \rra \cl O$.
\end{definition}

We next present two alternative descriptions of the linear model. First, $\cl G_{\cl O}$ acts on $\nu(\cl O)$ from the left with anchor $\pi$ by. 

    \begin{equation*}
    \mu_L(g, [v]) = g \cdot [v] := [dt_g(\tilde v)] \text{ where } \tilde v \in T_g\cl G \text{ satisfies } ds_g(\tilde v) = v.  
  \end{equation*}

    The action is well-defined, and yields the action groupoid $\cl G_{\cl O} \ltimes \nu(\cl O)$. One can check this is isomorphic to $\nu(\cl G_{\cl O})$ by $[\tilde v_g] \mapsto (g, [ds_g \tilde v])$. Our second description is point-wise. Consider the two principal $G_x$-bundles:
    \begin{align*}
      t:P_x &\to \cl O \\
      P_x \times P_x &\to \cl G_{\cl O}, \quad (g, h) \mapsto gh^{-1}.
    \end{align*}

    The action $\mu_L$ provides a left action of $G_x$ on $\nu_x\cl O$. Then, we can form the two associated bundles,
    \begin{align*}
      P_x \times_{G_x} \nu_x \cl O, \quad \text{and} \quad (P_x \times P_x) \times_{G_x} \nu_x \cl O. 
    \end{align*}
      Both bundles above are isomorphic to $\nu(\cl O)$ and $\cl G_{\cl O} \ltimes \nu(\cl O)$, respectively: the isomorphisms are $[k, w] \mapsto k \cdot w$, and $[k, k', w] \mapsto (k(k')^{-1}, k \cdot w)$. Using these identifications to transport the groupoid structure of $\nu(\cl G_{\cl O})$, we obtain the Lie groupoid $(P_x \times P_x) \times_{G_x} \nu_x \cl O \rra P_x \times_{G_x} \nu_x \cl O$, which is a pointwise version of the linear model.

    \begin{example}
      \label{ex:5}
      Consider a Lie group $G$ acting on a manifold $M$, with orbit $\cl O$ through $x$.  Take $\cl G$ to be the action groupoid $G\ltimes M$. Its pointwise linear model is given by the left $G$-action on $G \tensor[]{\times}{_{G_x}} \nu_x\cl O$.  If the action is locally proper at $x$ (see Definition 2.3.2 in \cite{DK2000}), the Tube Theorem (for example, Theorem 2.4.1 in \cite{DK2000}) gives a $G$-equivariant diffeomorphism from $G \tensor[]{\times}{_{G_x}} D$, where $D$ is a $G_x$-invariant open neighbourhood of $0$ in $\nu_x\cl O$, to some saturated neighbourhood $U$ of $\cl O$ in $M$.  Therefore if the action is locally proper at $x$, it is linearizable in the sense of Definition \ref{def:22} at $\cl O$, and furthermore we can take $U$ and $V$ saturated. Palais \cite{P1961} showed that a Lie group action with compact isotropy is linearizable at $\cl O$ if and only if if it locally proper at $x$.
    \end{example}

    In anology with the previous example, our main source of linearizable groupoids comes from proper Lie groupoids, by 
    
    \begin{theorem}
      \label{thm:8}
      A proper Lie groupoid $G \rra M$ is linearizable at every orbit. More generally, if $G \rra M$ is proper at $x$ (meaning $(s,t)$ is proper at $(x,x)$; see Definition 1.1 in \cite{CS2013}), it is linearizable at the orbit $\cl{O}$ through $x$. 
    \end{theorem}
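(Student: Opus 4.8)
The plan is to follow the self-contained argument of Crainic and Struchiner \cite{CS2013}, whose architecture I now outline. The statement is local around $\cl O$, and linearizability behaves well under restriction to a transversal (such a restriction induces a Morita equivalence, cf.\ Section \ref{sec:morita-equiv-thro}), so the first move is to reduce to a normal form at a fixed point. Choose a submanifold $S \subseteq M$ through $x$ with $T_x S = \nu_x \cl O$ and transverse to $\cl O$, and restrict $\cl G$ to a groupoid $\cl G_S \rra S$ near $x$. Then $x$ is a fixed point of $\cl G_S$ with isotropy $G_x$, and $\cl G$ near $\cl O$ is recovered from $\cl G_S$ by the associated-bundle construction over the principal $G_x$-bundle $P_x \to \cl O$, exactly as in the pointwise description of the linear model given above. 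So it suffices to linearize $\cl G_S$ at $x$, where the linear model is the action groupoid $G_x \ltimes \nu_x\cl O$ of the normal isotropy representation; and properness at $x$ survives the reduction.

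The second step is to produce a $\cl G$-invariant Riemannian metric on a neighbourhood of $x$. Properness at $x$ says $(s,t):\cl G \to M \times M$ is proper over a neighbourhood of $(x,x)$; fixing a Haar system and an invariant cutoff function supported near the orbit, one averages an arbitrary metric on a tube about $\cl O$ to obtain one for which the locally defined groupoid action is by isometries and which restricts to a prescribed invariant metric along $\cl O$. This is precisely where properness enters: it makes the fibres of $(s,t)$ over which one integrates compact.

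The decisive third step upgrades this metric to an isomorphism of groupoids between $\cl G|_U \rra U$ and $\nu(\cl G_{\cl O})|_V \rra V$ restricting to the identity on $\cl G_{\cl O} \rra \cl O$, as demanded by Definition \ref{def:22}. The exponential map of the invariant metric furnishes a diffeomorphism $V \subseteq \nu(\cl O) \to U \subseteq M$ carrying the zero section to $\cl O$ and equivariant on objects; differentiating the structure maps then produces a candidate isomorphism on arrows. The catch---this is exactly the gap in Zung's original argument \cite{Z2006}---is that this candidate need not be compatible with composition; it is only a linearization ``to first order''. Crainic and Struchiner repair this with a Moser-type deformation: one constructs a smooth path of approximate linearizations and integrates a correction vector field obtained by solving a cohomological equation, whose solvability rests on the vanishing of the differentiable cohomology of a proper groupoid in positive degree with coefficients in the relevant representation. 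I expect this cohomological correction---together with the bookkeeping needed to run it with properness only at $x$ while keeping $U$ and $V$ under control---to be the main obstacle; the transversal reduction and the averaging are comparatively routine. Since carrying this out in full lies well outside the scope of the present paper, in the main text we simply invoke \cite{CS2013}.
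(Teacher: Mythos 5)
The paper offers no proof of Theorem \ref{thm:8}: it is quoted as an external result, with the surrounding text explicitly attributing it to Crainic and Struchiner \cite{CS2013} and the earlier work of Weinstein \cite{W2002} and Zung \cite{Z2006}. Since your sketch also ends by invoking \cite{CS2013}, the two ``proofs'' coincide in what they actually establish, namely nothing beyond the citation. In that sense your proposal is acceptable for the purposes of this paper.

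However, your outline of what \cite{CS2013} does is not an accurate summary of that paper, and since the outline is the only mathematical content of your sketch, it should be corrected. The step ``average an arbitrary Riemannian metric to an invariant one and use its exponential map'' is not the Crainic--Struchiner argument; it is the later Riemannian-groupoid proof of del Hoyo and Fernandes (and even there one needs metrics on the nerve compatible with the groupoid structure, not merely an invariant metric on $M$, since ``the groupoid acts by isometries'' has no naive meaning for a non-\'etale groupoid). What Crainic and Struchiner average is the candidate linearization map itself: using a normalized Haar system and a cutoff supported near the orbit (this is where properness at $x$ enters), they average an arbitrary exponential adapted to $\cl O$, exploiting the convexity of the compatibility conditions, to produce one that is multiplicative near the orbit. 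Likewise, the ``Moser-type deformation solving a cohomological equation in differentiable cohomology'' is the mechanism of the still later deformation-cohomology proof of Crainic, Mestre, and Struchiner, not of \cite{CS2013}. Finally, the reduction ``restrict to a transversal, linearize at the fixed point, induce back up'' is stated too casually: Definition \ref{def:22} demands neighbourhoods of the entire (possibly non-compact) orbit $\cl O$ in $M$ and in $\nu(\cl O)$, and the passage from a fixed-point normal form on a slice to a linearization around the whole orbit is precisely the point where earlier accounts were incomplete and where \cite{CS2013} spend much of their care; it is not disposed of by observing that the restriction to a transversal is a Morita equivalence. None of this affects the present paper, which reasonably treats the theorem as a black box, but if the sketch is meant as a faithful description of the cited proof it conflates three distinct arguments and should be revised.
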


    For this version of linearizability of proper Lie groupoids, we use Crainic and Struchiner's work \cite{CS2013}. As alluded in the introduction, earlier fundamental work in this direction was done by Weinstein \cite{W2002} and Zung \cite{Z2006}. Note that a Lie groupoid may be proper at each $x \in M$, hence linearizable at each orbit, without being globally proper.

Of interest to us is the fact every linearizable Lie groupoid induces a singular foliation that is decomposed by dimension. In the context of proper groupoids, this fact is also proved by Posthuma, Tang, and Wang in \cite{PTW2021}. The authors rely the linearization theorem for proper Lie groupoids given by \cite{CS2013}, as well as a description of the linear model provided in \cite{PPT}. Because linearizable Lie groupoids are a key example for us, and we prefer to avoid assuming properness, we give a self-contained - but not fundamentally dissimilar - reproduction of this fact.

\begin{proposition}
  \label{prop:7}
  A linearizable Lie groupoid induced singular foliation decomposed by dimension. 
\end{proposition}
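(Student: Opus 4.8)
The plan is to reduce to the linear model and then to a statement about linear representations. Both linearizability (Definition \ref{def:22}) and the property that a subset is a diffeological submanifold are local, so it suffices to prove that, for each orbit $\cl O$ through a point $x$ and in a neighbourhood of the zero section in the linear model $\nu(\cl G_{\cl O}) \rra \nu(\cl O)$, the subsets $M_{=k}$ are diffeological submanifolds. I would use the pointwise description recalled above: $\nu(\cl O) = P_x \times_{G_x} \nu_x\cl O$, with $V := \nu_x\cl O$ carrying the linear isotropy representation of $G_x$, and the groupoid $(P_x\times P_x)\times_{G_x} V \rra P_x\times_{G_x} V$. A direct computation identifies the orbit (hence leaf) through a class $[p,v]$ with $P_x\times_{G_x}(G_x\cdot v)$, a bundle over $\cl O$ with fibre $G_x\cdot v$, so its dimension is $\dim\cl O + \dim(G_x\cdot v)$. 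Hence, with $e := k - \dim\cl O$,
\begin{equation*}
  M_{=k} = P_x \times_{G_x} V_{(e)}, \qquad V_{(e)} := \{\, v \in V \mid \dim(G_x\cdot v) = e \,\}.
\end{equation*}
Now $V_{(e)}$ is $G_x$-invariant, and over a chart $U\subseteq\cl O$ trivializing the principal bundle $P_x\to\cl O$ the set $M_{=k}$ becomes $U\times V_{(e)}$; so $M_{=k}$ is a diffeological submanifold of $\nu(\cl O)$, hence locally of $M$, provided each $V_{(e)}$ is a diffeological submanifold of $V$. By locality of the diffeological-submanifold condition, the proposition reduces to the following key step: \emph{for a linear action of a Lie group $H$ on a vector space $V$, every orbit-dimension stratum $V_{(e)}$ is a submanifold of $V$.}

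For the key step I would encode the orbit dimension as a rank. The infinitesimal action gives, for each $v$, a linear map $a_v\colon \fk h\to V$, $\xi\mapsto\xi\cdot v$, with $\dim(H\cdot v) = \operatorname{rank} a_v$; the assignment $v\mapsto a_v$ is itself linear and $H$-equivariant. After splitting off the fixed subspace $V^{H^\circ}$ — the assignment $v\mapsto a_v$ factors through the linear projection onto a complement, a submersion, so $V_{(e)}$ is the preimage of the corresponding stratum there — one may assume $v\mapsto a_v$ is injective, identifying $V_{(e)}$ with the constant-rank-$e$ locus inside the $H$-subrepresentation $\{a_v\}\subseteq\operatorname{Hom}(\fk h,V)$. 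Fixing $v_0\in V_{(e)}$, I would take a linear transversal $W$ to $H\cdot v_0$ at $v_0$, use that $(\xi,w)\mapsto e^{\xi}\cdot w$ is a submersion near $(0,v_0)$ together with the $H$-invariance of $V_{(e)}$ to present $V_{(e)}$ near $v_0$ as a product of orbit directions with $V_{(e)}\cap W$, and reduce to smoothness of $V_{(e)}\cap W$ at $v_0$. When the isotropy is compact — the case of proper groupoids, handled in \cite{PTW2021,PPT} — this is immediate from Bochner linearization, the slice being a fixed-point set of the compact stabilizer acting linearly.

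The hard part is the key step for \emph{non-compact} isotropy, where no slice theorem is available: one must show directly that the constant-rank locus of the distinguished linear family $\{a_v\}$ of homomorphisms is a manifold. I expect the essential input to be the $H$-equivariance of the construction — the family is a subrepresentation and the stratum is $H$-invariant — which should force the relevant tangent-space intersections to have locally constant dimension. Granting the key step, assembling the local pieces over an open cover of $M$ by linearization neighbourhoods, together with a principal-bundle trivialization of $P_x\to\cl O$ and locality of the diffeological-submanifold condition, yields that every $M_{=k}$ is a diffeological submanifold of $M$, i.e.\ the induced foliation is decomposed by dimension.
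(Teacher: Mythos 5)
Your reduction to the linear model is sound, and your identification of the leaf through $[p,v]$ with $P_x\times_{G_x}(G_x\cdot v)$, hence $M_{=k}\cap U = P_x\times_{G_x}V_{(e)}$ with $e = k-\dim\cl O$, matches the computation in the paper. But the proposal then hinges on a ``key step'' --- that \emph{every} orbit-dimension stratum $V_{(e)}$ of a linear $H$-representation is a submanifold --- which you do not prove: the final paragraph only says you ``expect'' equivariance to force the conclusion. For non-compact isotropy (exactly the case the paper cares about, since it avoids assuming properness) this is a genuine gap, and the statement amounts to smoothness of a constant-rank locus of a linear family of homomorphisms, which is delicate and not obviously true in the generality you need.

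The gap is avoidable, because the key step is stronger than what the proposition requires. Being a diffeological submanifold is checked pointwise, so you only need to describe $M_{=k}$ near points $y$ \emph{of} $M_{=k}$, and at such a point you are free to linearize at the orbit $\cl O_y$, which has dimension exactly $k$. Since orbit dimension is lower semi-continuous, $M_{=k}$ near $y$ is the \emph{minimal} stratum of that linearization, i.e.\ $V_{(0)}$ in your notation; the higher strata $V_{(e)}$, $e>0$, describe the sets $M_{=k'}$ with $k'>k$ near $y$, which you never need to understand there (you handle them by linearizing at \emph{their} points instead). And $V_{(0)}$ is easy: this is what the paper's proof does, showing $G_y\cong k\stab_{G_x}(w)k^{-1}$, so that $\dim\cl O_y=\dim\cl O$ if and only if $\stab_{G_x}(w)\supseteq G_x^\circ$, i.e.\ $w\in(\nu_x\cl O)^{G_x^\circ}$. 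Thus $M_{=k}\cap U$ corresponds to $V\cap P_x\times_{G_x}(\nu_x\cl O)^{G_x^\circ}$, an associated bundle with \emph{linear} fibre, hence an embedded submanifold --- no slice theorem, compactness, or analysis of constant-rank loci required. If you replace your key step with this localization, the rest of your argument goes through.
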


\begin{proof}
  Fix $x \in M$, with orbit $\cl O$. By linearizability, get open $U$ and $V$ about $\cl O$, and an isomorphism of the Lie groupoids $\cl G|_U \rra U$ and $(\cl G_{\cl O} \ltimes \nu(\cl O))|_V \rra V$. Since the dimension of the orbit about $y \in U$ is entirely determined by $\dim G_y$, we relate $G_y$ to $G_x$.

Say $y$ corresponds to $v$. Viewing $\nu(\cl O)$ as $P_x \times_{G_x} \nu_x \cl O$, we may find unique $[k, w]$ in the associated bundle such that $k\cdot w = v$. We propose the following diffeomorphism:

  \begin{equation*}
    (G_{\cl O} \ltimes \nu(\cl O)) \supseteq \text{iso}(v) \to k \stab_{G_x}(w)k^{-1} \subseteq \cl G_{\cl O}, \quad (g, v) \mapsto g.
  \end{equation*}

  \begin{itemize}
  \item Well-defined: the only arrows from $v$ in the action groupoid are of the form $(g, v)$, because the source map is just the projection of the second coordinate. Also, by definition of the action,
    \begin{align*}
      g: \pi(v) &\mapsto \pi(g\cdot v) = \pi(v) \\
      k: x &\mapsto \pi(k \cdot w) = \pi(v), 
    \end{align*}
    hence $k^{-1}gk \in G_x$. Moreover
    \begin{equation*}
      k^{-1}gk \cdot w = k^{-1}g \cdot v = k^{-1} \cdot v = w.
    \end{equation*}
    So finally, we can write $g = k(k^{-1}gk)k^{-1} \in k \stab_{G_x}(w)k^{-1}$.

  \item Smooth: this is the restriction of a projection, and both domain and codomain are embedded submanifolds.
  \item Inverse: the inverse map $g \mapsto (g, v)$ is well-defined because $g = k\gamma k^{-1}$ for some $\gamma \in \stab_{G_x}(w)$, hence
    \begin{equation*}
      g \cdot v = k \gamma k^{-1} \cdot v = k \gamma \cdot w = k \cdot w = v.
    \end{equation*}
    It is smooth since it is the restriction of an inclusion.
  \end{itemize}

  Therefore, $G_y \cong \text{iso}(v) \cong k\stab_{G_x}(w)k^{-1}$. In particular, $\dim G_y = \dim \stab_{G_x}(w)$, and  $\dim G_y = \dim G_x$ if and only if $\stab_{G_x}(w)$ is an open submanifold of $G_x$ about the identity. In this case, necessarily $\stab_{G_x}(w) \supseteq G_x^\circ$, where $G_x^\circ$ is the identity component of $G_x$. In other words, the set of $w$ fixed by $G_x^\circ$ corresponds exactly to those $y$ such that $\dim G_y = \dim G_x$. Denote by $(\nu_x\cl O)^{G_x^\circ}$ the vector subspace of fixed points of $G_x^\circ$. In this notation, we conclude the diffeomorphism $U \to V$ from the linearization descends to a bijection $M_{=k} \cap U \to V \cap P_x \tensor[]{\times}{_{G_x}} (\nu_x \cl O)^{G_x^\circ}$. The codomain is an embedded submanifold of $V$, and thus we take these bijections as charts for an atlas of $M_{=k}$. These make $M_{=k}$ into an embedded submanifold of $M$. The components of $M_{=k}$ may have different dimensions.
\end{proof}

   \begin{example}
     A consequence of the proof above is that if a Lie groupoid is linearizable at $x$, there is a neighbourhood of the orbit through $x$ such that all isotropy groups are conjugate to a subgroup of $G_x$. We can use this to provide a linear action which is not linearizable. Consider the representation of $\text{SL}_2(\CC)$ on binary forms of degree $3$ (i.e.\ polynomials of the form $p(x,y) = \sum a_i x^{3-i} y^i$), given by $M \cdot p(x,y) := p(M(x,y)^T)$. This is the unique irreducible representation of $\text{SL}_2(\CC)$ in 4 (complex) dimensions. The isotropy of $x^2y$ is trivial, yet in any neighbourhood of $x^2y$ there is a form with isotropy of order at least $3$; this is nontrivial, see \cite{P1994} page 162. Therefore the action groupoid is not linearizable around the orbit of $x^2y$.
   \end{example}
   
\section{Pseudogroups}
\label{sec:pseudogroups}

  A pseudogroup is a group of locally-defined diffeomorphisms, and these arise naturally in the context of singular foliations.  For this discussion, fix a smooth manifold $M$. If $f:U \to U'$ and $g:V \to V'$ are two smooth functions on open subsets of $M$, denote by $g \circ f$ the restricted composition $g\circ f:f^{-1}(V) \to g(U')$; note we restrict the codomain. As a special case, for a subset $W$ of $\dom f$, denote by $f|_W$ the restriction $f|_W:W \to f(W)$.  We allow for compositions to be the empty map.  Finally, given a set of maps $A$, let $A^{-1} := \{f \mid f^{-1} \in A\}$.

  \begin{definition}
    \label{def:23}
    A \emph{transition} on $M$ is a diffeomorphism $f:U \to U'$ of open subsets of $M$. A \emph{pseudogroup} $P$ is a collection of transitions such that
    \begin{enumerater}
    \item $\id|_U$ is in $P$ for every open subset $U$.
    \item If $f$ and $f'$ are in $P$, then so are $f' \circ f$ and $f^{-1}$.
    \item If $f:U \to U'$ is a transition and $\{U_i\}_{i \in I}$ is a cover of $U$, and we have $f|_{U_i}$ in $P$ for each $i \in I$, then $f$ is in $P$.
    \end{enumerater}
  \end{definition}

  The intersection of an arbitrary collection of pseudogroups is itself a pseudogroup. In particular, given an arbitrary set of transitions $A$, the \emph{pseudogroup generated by }$A$ is defined as
  \begin{equation*}
    P_A := \bigcap_{A \subseteq P'} P',
  \end{equation*}
  where $P'$ runs over all pseudogroups containing $A$. It is the minimal pseudogroup containing $A$. If $P$ is a pseudogroup and $A$ is a set of transitions with $P_A = P$, we say $P$ is generated by $A$. If $A$ can be chosen countable (finite), we say $P$ is countably (finitely) generated. A pseudogroup generated by $A$ consists exactly of those transitions that are locally compositions of elements of $A \cup A^{-1}$, in the following sense.
  \begin{lemma}
    \label{lem:5}
    Suppose $P$ is a pseudogroup generated by a set of transitions $A$. Then $f:U \to U'$ is in $P$ if and only if about each $p \in U$, there is an open neighbourhood $V$ in $U$ and transitions $f_1, \ldots, f_n \in A \cup A^{-1}$ such that
    \begin{equation*}
      f|_V = (f_1\circ \cdots \circ f_n)|_V.
    \end{equation*}
  \end{lemma}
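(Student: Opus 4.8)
The plan is to exhibit $P$ as the set of transitions that locally agree with finite compositions of members of $A \cup A^{-1}$, and to identify that set with the pseudogroup $P_A$ generated by $A$; since $P = P_A$ by hypothesis, this is precisely the statement. Concretely, let $Q$ be the collection of all transitions $f : U \to U'$ with the property that about each $p \in U$ there is an open $V \ni p$ contained in $U$ and transitions $f_1, \ldots, f_n \in A \cup A^{-1}$ --- where we allow the empty composition ($n = 0$), understood to be $\id|_V$ --- such that $f|_V = (f_1 \circ \cdots \circ f_n)|_V$. I will show $Q = P_A$.

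First I would verify that $Q$ is a pseudogroup. Axiom (i) holds by the empty-composition convention. For (ii): if $f, g \in Q$ then near any point of $\dom(g \circ f)$ the map $f$ agrees with some composition $\phi$ of elements of $A \cup A^{-1}$ and $g$ agrees with some composition $\psi$; after shrinking the neighbourhood so that the relevant image lands in $\dom \psi$, the restricted composition $g \circ f$ agrees with $\psi \circ \phi$, again a composition of elements of $A \cup A^{-1}$. Since $(A \cup A^{-1})^{-1} = A \cup A^{-1}$, reversing and inverting a local composition for $f$ produces a local composition for $f^{-1}$, so $f^{-1} \in Q$. For (iii): if $\{U_i\}$ covers $\dom f$ and each $f|_{U_i} \in Q$, then any $p \in \dom f$ lies in some $U_i$, and a local composition for $f|_{U_i}$ near $p$ is a local composition for $f$ near $p$; hence $f \in Q$. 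Finally $A \subseteq Q$ by taking $n = 1$, so by minimality of $P_A$ we get $P_A \subseteq Q$.

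For the reverse inclusion I would show that every pseudogroup $P'$ with $A \subseteq P'$ contains $Q$. Such a $P'$ contains $A^{-1}$ by axiom (ii), hence $A \cup A^{-1}$, hence all finite compositions $f_1 \circ \cdots \circ f_n$ of elements of $A \cup A^{-1}$, and hence all restrictions of such compositions to open subsets of their domains: indeed, for $g \in P'$ and open $V \subseteq \dom g$ one has $g|_V = g \circ \id|_V$, which lies in $P'$ by axioms (i) and (ii). Now given $f \in Q$, the open sets $V_p$ on which $f$ agrees with such a restricted composition cover $\dom f$ and satisfy $f|_{V_p} \in P'$, so $f \in P'$ by axiom (iii). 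As $P'$ ranges over all pseudogroups containing $A$, this gives $Q \subseteq \bigcap_{A \subseteq P'} P' = P_A$, and therefore $Q = P_A = P$, which is the lemma.

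I do not expect a genuine obstacle here: this is a standard ``description of the generated object'' argument. The only delicate points are bookkeeping --- carefully tracking the domains of the restricted compositions under the paper's convention that composition restricts the codomain and may be the empty map, and fixing the convention that an empty composition denotes a local identity, which is exactly what is needed for $Q$ to satisfy axiom (i). Once these conventions are pinned down, both the verification that $Q$ is a pseudogroup and the two inclusions making up $Q = P_A$ are short.
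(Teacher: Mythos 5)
Your proof is correct and is exactly the standard argument the paper has in mind --- the paper omits it entirely, declaring it ``straightforward,'' so there is nothing to diverge from: one shows the set $Q$ of transitions locally agreeing with finite compositions from $A \cup A^{-1}$ is itself a pseudogroup containing $A$, and that it sits inside every pseudogroup containing $A$. Your explicit convention that the empty composition ($n=0$) denotes a local identity is a worthwhile precision rather than mere bookkeeping: without it the ``only if'' direction of the lemma as literally stated would fail for $\id|_U$ near points of $U$ lying outside the domains and images of all generators, since the paper's conventions would force any nonempty composition there to be the empty map.
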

  The proof is straightforward.

  \subsection{The germ groupoid}
\label{sec:germ-groupoid}

Here assume $M$ is Hausdorff and second countable. Associated to each pseudogroup $P$ on $M$ is the \emph{germ groupoid} $\Gamma P$, a Lie groupoid. In the sequel we denote $\Gamma P$ by $\Gamma$ for simplicity. Its base manifold is $\Gamma_0 = M$. Its set of arrows $\Gamma$ consists of all germs of transitions in $P$, that is
  \begin{equation*}
    \Gamma := \{\germ_x f \mid f \in P, \ x \in \dom f\}.
  \end{equation*}

  The source of $\germ_x f$ is $x$, and the target is $f(x)$, so that $\germ_xf:x \mapsto f(x)$.  The multiplication is given by the composition of germs: if $f,g \in P$ and $f(x) \in \dom g$, then $\germ_{f(x)} g \cdot \germ_x f := \germ_x (f \circ g)$. The unit $x \mapsto x$ is the germ of the identity, and the inverse of $\germ_xf$ is $\germ_{f(x)} f^{-1}$. With these structures, it is not hard to see $\Gamma$ is a groupoid.

  To see it is Lie, we require a smooth structure on $\Gamma$ such that the source map $s$ is a submersion with Hausdorff fibers, and the other structure maps are smooth.  Given $(f:U \to U') \in P$, consider the map
  \begin{equation*}
   U \to \Gamma, \quad x \mapsto \germ_x f,
  \end{equation*}
  and denote its restriction to its image by $\tilde f$.  Let $\Psi := \{\tilde f \mid f \in P\}$.  We claim $\Psi$ is an atlas (with values in $M$) for $\Gamma$ making $\Gamma \rra M$ an \'{e}tale Lie groupoid.

Recall in Section \ref{sec:class-lie-group} we associated to each Lie groupoid $\cl G \rra M$ a pseudogroup $\Psi(\cl G)$ on $M$, whose elements were $t \circ \sigma$ for all local bisections $\sigma$.  We have $\Psi(\Gamma(P)) = P$.  As for $\Gamma(\Psi(G))$, there is a natural morphism $\Eff: G \to \Gamma(\Psi(G))$ which is the identity on the base, and maps the arrow $g$ to $\Eff(g) :=  \germ_{s(g)} (t \circ \sigma)$, where $\sigma$ is a section of $s$ through $g$ (whose germ is unique because $s$ is \'{e}tale). This functor (on arrows) is onto, but is not injective in general.

  \begin{definition}\label{def:24}
    We call $\Gamma(\Psi(G))$ the \emph{effect} of $G$, and denote it by $\Eff(G)$.  An \'{e}tale groupoid $G$ is \emph{effective} if the functor $\Eff:G \to \Gamma(\Psi(G))$ is injective, and hence an isomorphism of Lie groupoids.
  \end{definition}

  \begin{bibdiv}
    \begin{biblist}
      \bib{CS2013}{article}{
        title = {On the linearization theorem for proper Lie groupoids},
        author = {Crainic, Marius},
        author = {Struchiner, Ivan},
        journal = {Ann. Sci. \'{E}c. Norm. Sup\'{e}r. (4)},
        volume = {46},
        number = {5},
        date = {2013},
        pages = {723--746}
      }

      \bib{DLPR2012}{article}{
        title = {Smooth distributions are finitely generated},
        author = {Drager, Lance D.},
        author = {Lee, Jeffrey M.},
        author = {Park, Efton},
        author = {Richardson, Ken},
        journal = {Ann. Global Anal. Geom.},
        volume = {41},
        number = {3},
        date = {2012},
        pages={357--369}
      }
      
      \bib{DK2000}{book}{
        title = {Lie Groups},
        author = {Duistermaat, Johannes J.},
        author = {Kolk, Johan A. C.},
        date = {2000},
        publisher = {Springer},
        address = {Berlin},
        series = {Universitext},
      }
      
      \bib{F2015}{article}{
        title = {Normal forms and Lie groupoid theory},
        author = {Fernandes, Rui L.},
        conference = {
          title = {Geometric Methods in Physics},
          address = {Bia\l owie\.{z}a, Poland},
          date = {2014}
        },
        book = {
          series = {Trends in Mathematics},
          publisher = {Birkh\"{a}user},
          address = {Basel},
          date = {2015}
        },
        pages = {49--66}
      }
      
      \bib{H2011}{article}{
        title = {De Rham cohomology of diffeological spaces and foliations},
        author = {Hector, Gilbert},
        author = {Marc\'{i}as-Virg\'{o}s, Enrique},
        author = {Sanmart\'{i}n-Carb\'{o}n, Esperanza},
        journal = {Indag. Math. (N.S.)},
        volume = {21},
        number = {3--4},
        date = {2011},
        pages = {212--220}
      }

      \bib{He1962}{article}{
        title = {The differential geometry of foliations II},
        author = {Hermann, Robert},
        journal = {J. Math. Mech},
        volume = {11},
        number = {2},
        date  =  {1962},
        pages  =  {303--315}
      }

      \bib{HSZ2019}{article}{
        title = {Stacky Hamiltonian actions and symplectic reduction},
        author = {Hoffman, Benjamin},
        author = {Sjamaar, Reyer},
        contribution = {
          type = {an appendix},
          author = {Zhu, Chenchang}
        },
        date = {2019},
        status = {preprint, to appear},
        note = {arXiv.org:1808.01003v3}
      }

      \bib{I2013}{book}{
        title = {Diffeology},
        author = {Iglesias-Zemmour, Patrick},
        date = {2013},
        publisher = {American Mathematical Society},
        address = {Providence},
        series = {Mathematical Surveys and Monographs},
        volume = {185}
      }

      \bib{J1982}{article}{
        title = {Une $\mathcal C^\infty$-application non-immersive qui poss\`{e}de la propri\'{e}t\'{e} universelle des immersions},
        author = {Joris, Henri},
        language = {French},
        journal = {Arch. Math.},
        volume = {39},
        number = {3},
        date = {1982},
        pages = {269--277}
      }

      \bib{KW2016}{article}{
        title = {Basic forms and orbit spaces: a diffeological approach},
        author = {Karshon, Yael},
        author = {Watts, Jordan},
        journal = {SIGMA Symmetry Integrability Geom. Methods Appl.},
        volume = {12},
        pages = {19 pp}
      }

      \bib{Ler2010}{article}{ 
        title = {Orbifolds as stacks?},
        author = {Lerman, Eugene},
        journal = {Enseign. Math. (2)},
        volume = {56},
        number = {3--4},
        date = {2010},
        pages = {315--363}
      }

      \bib{M2005}{book}{
        title = {General Theory of Lie Groupoids and Lie Algebroids},
        author = {Mackenzie, Kirill C. H.},
        date = {2005},
        publisher = {Cambridge University Press},
        address = {Cambridge},
        series = {London Mathematical Society Lecture Note Series},
        volume = {213}
      }

      \bib{MM2003}{book}{
        title = {Introduction to foliations and Lie groupoids},
        author = {Moerdijk, Ieke},
        author = {Mrc\u{u}n, Janez},
        date = {2003},
        publisher = {Cambridge University Press},
        address = {Cambridge},
        series = {Cambridge Studies in Advanced Mathematics},
        volume = {91}
      }

      \bib{P1961}{article}{
        title = {On the existence of slices for actions of non-compact Lie groups},
        author = {Palais, Richard S.},
        date = {1961},
        journal = {Ann. of Math.},
        volume = {73},
        number = {2},
        pages = {295--323}
      }

      \bib{PPT}{article}{
        title = {Geometry of orbit spaces of proper Lie groupoids},
        author = {Pflaum, Marcus J.},
        author = {Posthuma, Hessel},
        author = {Tang, Xiang},
        date = {2014},
        journal = {J. Reine Angew. Math.},
        volume = {694},
        pages = {49--84}
      }
      
      \bib{P1994}{book}{
        title = {Algebraic Geometry IV},
        part = {2},
        subtitle = {Invariant Theory},
        editor = {Parshin, Aleksei P.},
        editor = {Shafarevich, Igor R.},
        author = {Popov, Vladimir L.},
        author = {Vinberg, Ernest B.},
        series = {Encyclopaedia of Mathematical Sciences},
        volume = {55},
        publisher = {Springer-Verlag},
        address = {Berlin},
        date = {1994}
      }
      
      \bib{PTW2021}{article}{
        title = {Resolutions of proper Riemannian Lie groupoids},
        author = {Posthuma, Hessel},
        author = {Tang, Xiang},
        author = {Kirsten, Wang},
        date = {2021},
        journal = {Int. Math. Res. Not. IMRN},
        number = {2},
        pages = {1249--1287}
      }

      \bib{S1973}{article}{ 
        title = {Accessible sets, orbits, and foliations with singularities},
        author = {Stefan, Peter},
        journal = {Proc. London Math. Soc. (3)},
        volume = {29},
        date = {1974},
        pages = {699--713}
      }

      \bib{Su1973}{article}{
        title = {Orbits of families of vector fields and integrability of distributions}, 
        author = {Sussmann, H\'{e}ctor J.},
        journal = {Trans. Amer. Math. Soc.},
        volume = {180},
        date = {1973},
        pages = {171--188}
      }

      \bib{Su2008}{article}{
        title = {Smooth distributions are globally finitely spanned},
        author = {Sussmann, H\'{e}ctor J.},
        book = {
          title = {Analysis and Design of Nonlinear Control Systems},
          subtitle = {In honor of Alberto Isidori},
          editor = {Astolfi, Alessandro},
          editor = {Lorenzo, Marconi},
          publisher = {Springer-Verlag},
          address = {Berlin}
        },
        pages = {3--8}
      }

      \bib{W2015}{unpublished}{ 
        title = {The orbit space and basic forms of a proper Lie groupoid},
        author = {Watts, Jordan},
        date = {2015},
        status = {preprint, to appear},
        note = {arXiv:1309.3001v4}
      }

      \bib{W2002}{article}{
        title = {Linearization of regular proper groupoids},
        author = {Weinstein, Alan},
        journal = {J. Inst. Math. Jussieu},
        volume = {1},
        issue = {3},
        date = {2002},
        pages = {493-511}
      }

      \bib{Z2006}{article}{
        title = {Proper groupoids and momentum maps: linearization, affinity, and convexity},
        author = {Zung, Nguyen Tien},
        journal = {Ann. Sci. \'{E}cole Norm. Sup. (4)},
        volume = {39},
        issue = {5},
        date = {2006},
        pages = {841-869}
      }
    \end{biblist}
  \end{bibdiv}
\end{document}